\SilentMatrices \SelectTips{cm}{}
\newcommand{\ot}{\otimes}
\newcommand{\R}{\mathbb{R}}
\newcommand{\Z}{\mathbb{Z}}
\newcommand{\C}{\mathbb{C}}
\newcommand{\A}{\mathbb{A}}
\renewcommand{\P}{\mathbb{P}}
\newcommand{\inv}{^{-1}}
\newcommand{\Hom}{\text{\rm Hom}}
\newcommand{\Mat}{\text{\rm Mat}}
\newcommand{\PSL}{\text{\rm PSL}}
\newcommand{\SL}{\text{\rm SL}}
\renewcommand{\O}{\mathcal{O}}
\newcommand{\cF}{\mathcal{F}}
\newcommand{\cO}{\mathcal{O}}
\newcommand{\cM}{\mathcal{M}}
\newcommand{\U}{\mathcal{U}}
\newcommand{\cI}{\mathcal{I}}
\newcommand{\QCoh}{\mathsf{QCoh}}
\newcommand{\Spec}{\text{\rm Spec}}
\newcommand{\Sym}{\text{\rm Sym}}
\newcommand{\dmod}{\text{\rm -mod}}
\newcommand{\End}{\text{\rm End}}
\newcommand{\Loc}{\mathsf{Loc}}
\newcommand{\D}{\mathcal{D}}
\newcommand{\Y}{\mathcal{Y}}
\newcommand{\X}{\mathcal{X}}
\newcommand{\Asymp}{\text{\rm Asymp}}
\newcommand{\horocycle}{\frac{G/N \times N^- \setminus G}{H}}
\newcommand{\filt}{^{\text{\rm filt}}}
\newcommand{\g}{\mathfrak{g}}
\renewcommand{\k}{\mathfrak{k}}
\newcommand{\frakn}{\mathfrak{n}}
\newcommand{\frakk}{\mathfrak{k}}
\newcommand{\fraku}{\mathfrak{u}}
\newcommand{\frakl}{\mathfrak{l}}
\newcommand{\fraks}{\mathfrak{s}}
\newcommand{\frakp}{\mathfrak{p}}
\newcommand{\fg}{\mathfrak{g}}
\newcommand{\fa}{\mathfrak{a}}
\newcommand{\fb}{\mathfrak{b}}
\newcommand{\fn}{\mathfrak{n}}
\newcommand{\fm}{\mathfrak{m}}
\newcommand{\fh}{\mathfrak{h}}
\newcommand{\fk}{\mathfrak{k}}
\newcommand{\fl}{\mathfrak{l}}
\newcommand{\fZ}{\mathfrak{Z}}
\newcommand{\Lie}{\text{\rm Lie}}
\newcommand{\Ug}{U\g}
\newcommand{\Vinb}{\mathbb{V}}
\newcommand{\VinbX}{\mathbb{V}_X}
\newcommand{\VXcirc}{{\mathbb{V}}^\circ_X}
\newcommand{\VinbG}{\mathbb{V}_G}
\newcommand{\Gad}{G^\text{\rm ad}}
\newcommand{\Gbarad}{\overline{G^\text{\rm ad}}}
\newcommand{\Gbar}{\overline{G}}
\newcommand{\gr}{\text{\rm gr}}
\newcommand{\Rees}{\text{\rm Rees}}
\newcommand{\Lad}{L_I^{\text{\rm ad}}}
\newcommand{\Usl}{U(\mathfrak{sl}_2)}
\newcommand{\Derv}{\text{\rm Derv}}
\renewcommand{\frakk}{\mathfrak k}
\newcommand{\ad}{\text{\rm ad}}
\newcommand{\submon}{_{\text{\rm mon}}}
\newcommand{\wt}{\widetilde}
\numberwithin{equation}{section} 
\theoremstyle{plain}
\newtheorem{theorem}[equation]{Theorem}
\newtheorem{prop}[equation]{Proposition}
\newtheorem{defprop}[equation]{Definition-Proposition}
\newtheorem{lemma}[equation]{Lemma}
\newtheorem{cor}[equation]{Corollary}
\theoremstyle{definition}
\newtheorem{definition}[equation]{Definition}
\newtheorem{rmk}[equation]{Remark}
\newtheorem{example}[equation]{Example}
\title{Wonderful asymptotics of matrix coefficient $D$-modules}
\author{David Ben-Zvi and Iordan Ganev}
\date{}
\begin{document} 

\maketitle

\begin{abstract}
Beilinson--Bernstein localization realizes representations of complex reductive Lie algebras as monodromic $D$-modules on the ``basic affine space'' $G/N$, a torus bundle over the flag variety. A doubled version of the same space appears as the horocycle space describing the geometry of the reductive group $G$ at infinity, near the closed stratum of the wonderful compactification $\Gbar$, or equivalently in the special fiber of the Vinberg semigroup of $G$. We show that Beilinson--Bernstein localization for $U\fg$-bimodules arises naturally as the specialization at infinity in $\Gbar$ of the $D$-modules on $G$ describing matrix coefficients of Lie algebra representations. More generally, the asymptotics of matrix coefficient $D$-modules along any stratum of $\Gbar$ are given by the matrix coefficient $D$-modules for parabolic restrictions. This provides a simple algebraic derivation of the relation between growth of matrix coefficients of admissible representations and $\mathfrak n$-homology. The result is an elementary consequence of the compatibility of localization with the degeneration of affine $G$-varieties to their asymptotic cones; analogous results hold for the asymptotics of the equations describing spherical functions on symmetric spaces. 
\end{abstract}

\tableofcontents

\section{Introduction}\label{sec:intro}

Our goal is to explore a simple relation between one of the main themes in the harmonic analysis on real reductive groups, the Harish-Chandra theory of asymptotics of matrix coefficients, and one of the main tools in geometric representation theory, the Beilinson--Bernstein localization of representations on flag varieties. The relation is mediated by the wonderful compactification or, equivalently, the Vinberg degeneration of $G$. The perspective of this paper was inspired by the seminal works of Bezrukavnikov--Kazhdan~\cite{BezrukavnikovGeometrysecondadjointness2015} and Sakellaridis--Venkatesh~\cite{SakellaridisPeriodsHarmonicAnalysis2017} which utilize a geometric interpretation of asymptotics and scattering in the $p$-adic setting.

\subsection{Beilinson--Bernstein localization and matrix coefficients}\label{sec:intro:bb}
Let us fix a complex, connected reductive group $G$ with Borel $B$, unipotent radical $N = R_{\rm u }(B)$, and Cartan $H=B/N$, and denote the corresponding Lie algebras by $\fg,\fb,\fn$ and $\fh$. The Beilinson--Bernstein localization theorem~\cite{BEILINSONLocalisationgmodules1981} (see also \cite{TakeuchiDModulesPerverseSheaves2007} for an exposition) is a generalization of the Borel--Weil--Bott realization of representations of $G$ through the cohomology of line bundles on the flag variety. Localization defines a functor
$$\Loc_{G/B,\lambda}:U\fg\dmod_{[\lambda]}\to D_\lambda(G/B)$$ from representations of the enveloping algebra $U\fg$ with fixed infinitesimal character $[\lambda]\in \fh^*/W$ to $D$-modules on $G/B$ twisted by $\lambda\in \fh^*$. At a point $x$ in the flag variety with stabilizer $\fb_x$ and associated nilpotent radical $\fn_x$, the functor $\Loc_{G/B,\lambda}$ calculates the $\lambda$-isotypic part of the $\fh$-action on $\fn_x$-coinvariants (zeroth $\fn_x$-homology) of a representation. The localization theorem asserts that $\Loc_{G/B,\lambda}$ is an equivalence of abelian categories for $\lambda$ dominant and regular. 

The construction extends naturally to describe arbitrary representations of $\fg$ by replacing $G/B$ by the ``basic affine'' space $G/N$, which forms an $H$-bundle over the flag variety $G/B$, and considering the category $D_H(G/N)$ of  $H$-monodromic $D$-modules\footnote{An $H$-monodromic structure on a $D$-module is an $H$-equivariant structure on the underlying quasi-coherent sheaf, compatible with the action of the sheaf of differential operators. Such $D$-modules are also known as weakly $H$-equivariant, conical, or homogeneous.} on $G/N$. The resulting localization functor $$\Loc_{G/N}:U\fg\dmod\longrightarrow D_H(G/N)$$ attaches to the fiber in $G/N$ over $x \in G/B$ the full $\fh$-module given by the $\fn_x$-homology of a representation, considered as a monodromic $D$-module on $H$. This functor is studied in~\cite{Ben-ZviBeilinsonBernsteinlocalizationHarishChandra2012a}, where its derived version is shown to be monadic and comonadic, providing a derived equivalence between representations and modules (or comodules) over a form of the Demazure Hecke algebra acting on $D_H(G/N)$. (Note that in the current paper we work for concreteness solely on the abelian level.) 

For any variety $X$ with an action of an algebraic group $G'$, there is a localization functor $\Loc_X : U\fg'\dmod\to  D(X)$ from representations of the Lie algebra $\g'$ to $D$-modules on $X$. This functor originates from the action map (or quantum moment map) $U\fg'\to \Gamma(X,\D_X)$ valued in the algebra of global differential operators on $X$, and is left adjoint to the global sections functor $\Gamma: D(X)\to U\fg'\dmod$. We are interested in the instance of the localization construction that takes place on the group $G$ itself, considered as the symmetric space $X=G'/K$ where $G'=G\times G$ and $K=G$ (i.e., the action of $G\times G$ on $G$ by left and right multiplication). In this case, we call the resulting functor 
$$\Loc_G:U\fg\ot U\fg\dmod \longrightarrow D(G)$$ 
the {\em matrix coefficients} functor. Fiberwise, this functor calculates coinvariants of representations for the $\fg\oplus \fg$-stabilizers of points in $G$. 
The matrix coefficients functor has a universal origin, in the tautological relation between representations of any group $G$ and functions on the group itself given by matrix elements of the group action 
\begin{equation} \label{eq:mc} V \ot V^*\to \text{\rm Fun}(G),\hskip.3in v \ot v'\mapsto \left[ m_{v,v'} : g \mapsto \langle v', g\cdot v\rangle \right].\end{equation} 
Here $V$ is any representation of $G$ and $\text{\rm Fun}(G)$ is an appropriate space of functions on $G$ (depending on the type of representation).  Concretely, the matrix coefficients functor translates the $U\fg$-relations satisfied by differentiable vectors $v$ and $v'$ into a system of differential equations on $G$ imposed on the corresponding matrix coefficient functions\footnote{The (derived) matrix coefficients functor is recovered as matrix coefficients for $U\fg\dmod$ considered as a strong categorical representation of $G$ (i.e., module category for the monoidal dg category $(D(G), \ast)$ of $D$-modules on $G$). Taking matrix coefficients with the ``spherical vector'' $\text{\rm Ind}_{\fk}^{\fg}\C\in (U\fg\dmod)^K$ we recover the localization of $U\fg\dmod$ on the symmetric space $G/K$ as a categorical form of spherical functions.} $m_{v,v'}$. See Section~\ref{real groups intro} below for a discussion of our main motivation, the relation with matrix coefficients of admissible representations of real forms of $G$. 


\subsection{Wonderful asymptotics}\label{sec:intro:wonderful}
Harmonic analysis on symmetric spaces provides a different source of intuition for Beilinson--Bernstein localization. Specifically, flag varieties appear as strata in compactifications of symmetric spaces, with associated horocycle spaces appearing as models for the geometry of the space near infinity.  

To be more specific, let $Z(G)$ be the center of $G$ and $\Gad = G/Z(G)$ be the adjoint group. Let $H^{\text{\rm ad}} = H/Z(G)$ be the adjoint torus, which is a maximal torus of $\Gad$. We denote by $B^-$ the Borel subgroup of $G$ opposite to $B$, and $N^-$ its unipotent radical. 

Recall that the adjoint group $\Gad$ admits a smooth, projective $G \times G$-equivariant compactification $\Gbarad$, known as the wonderful compactification (See, e.g.\ \cite{ConciniCompactificationsymmetricvarieties1999, Evenswonderfulcompactification2008a}). The $G \times G$ orbits on $\Gbarad$ are indexed by subsets of positive simple roots, and the unique closed orbit is identified with $G/B \times B^-\backslash G$, where the notation makes clear the $G$ actions from the left and right. We record the following consequence of these observations:
\begin{quote}
	The  group $G$ ``looks at infinity'' like the open $G\times G$ orbit normal cone to the closed stratum $G/B\times B^-\backslash G\subset\Gbar$ in the wonderful compactification. This ``deleted normal cone'' is in turn identified with the  {\em horocycle space} for $G$, namely, $$\Y=G/N\times_H N^-\backslash G.$$
\end{quote}
Thus, the horocycle space is formed by taking the balanced product of the right and left $H$-actions on $G/N$ and $N^- \backslash G$, respectively, and forms a $G\times G$-equivariant $H$-bundle over closed stratum $G/B\times B^-\backslash G$.  

As a result, there are natural ways to relate $D$-modules on $G$ to their ``asymptotics'', which are $D$-modules on $\Y$. 
The theory of Verdier specialization and the Kashiwara--Malgrange V-filtration provide a functor from holonomic $D$-modules on $G$ to holonomic $D$-modules on $\Y$. The closures of the $r$ codimension 1 orbits in $\Gbarad$ constitute the boundary divisors, and these give rise to a multi-version of the $V$-filtration on the sheaf $\D_{\Gbarad}$ of differential operators on $\Gbarad$ (as well as on $j_* \D_{\Gad}$, where $j : \Gad \rightarrow \Gbarad$ is the inclusion).  

On the other hand, by the Peter--Weyl theorem, the matrix coefficients of finite dimensional representations of $G$ span the algebra $\O(G)$ of polynomial functions on $G$ and, on the level of rings, induce a filtration by the weight lattice $\Lambda$ of $G$. Thus, one considers the category of $\Lambda$-filtered $D$-modules on $G$, where the filtration is required to be compatible with the matrix coefficients filtration on $\O(G)$. Passing to the associated graded defines a functor 
\begin{equation}\label{eq:intro:asymp}\text{\rm Asymp}: D(G)\filt\longrightarrow D_H(\Y)\end{equation}
from the category of filtered $D$-modules to $H$-monodromic $D$-modules on $\Y$. We check in Proposition \ref{prop:vfiltration} that the two pictures are compatible -- the $V$-filtration on $D_{\Gad}= \Gamma(\Gad,  \D_{\Gad})$ coincides with the matrix coefficients filtration on $D_{\Gad}$, so that the Verdier specialization of a holonomic $D$-module coincides with the associated graded for the unique compatible filtration.

Our main observation is that the localization functor naturally lifts to give filtered $D$-modules on $G$, and thus admits an elementary version of asymptotics. In particular, the specialization at infinity of holonomic modules arising by localization is likewise given by localization, and is thus easy to calculate.

\begin{theorem}[Theorem \ref{thm:loc} and Proposition \ref{prop:vfiltration}]\label{thm:intro:loc}
The asymptotics of matrix coefficient $D$-modules is given by doubled Beilinson--Bernstein localization: the natural localization functor $ \Ug\ot\Ug\dmod\to D(G)$ lifts to $D(G)\filt$, and we have a commutative diagram
$$\xymatrix{\Ug\ot\Ug\dmod \ar[rrrrr]^{\qquad  \mbox{\rm matrix coefficients}} \ar[drrrrr]_-{\mbox{\rm Beilinson--Bernstein \quad \qquad}} &&&&& D(G)\filt\ar[d]^-{\mbox{\Asymp}   }\\
&&&&&D_H(\Y)}$$
Moreover, on representations with holonomic localization, the asymptotics functor is naturally identified with Verdier specialization or nearby cycles. \end{theorem}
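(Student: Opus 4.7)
The plan is to factor the proof into two independent parts: a filtered lift of the matrix coefficients functor, which follows from the fact that the moment map preserves the matrix coefficients filtration, and the comparison of this filtration with the V-filtration coming from the wonderful compactification, which is the content of Proposition \ref{prop:vfiltration}. The identification of asymptotics with Verdier specialization then follows from the standard computation of nearby cycles via the V-filtration.

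First I would establish the filtered lift. The $G\times G$ action on $\O(G)$ preserves each piece of the matrix coefficients filtration because the filtration is by highest weight and $G\times G$ acts within each isotypic summand $V_\lambda\boxtimes V_\lambda^*$. Differentiating, the moment map $\Ug\ot\Ug\to\D_G$ lands in the filtration-preserving (degree-zero) part of $\D_G$ with respect to the induced filtration. Consequently, for any $\Ug\ot\Ug$-module $M$, the localization $\D_G\ot_{\Ug\ot\Ug}M$ inherits a canonical $\Lambda$-filtration, giving the desired factorization through $D(G)\filt$. Since $\Ug\ot\Ug$ sits in degree zero, passing to associated graded commutes with this tensor product and yields $\gr\D_G\ot_{\Ug\ot\Ug}M$. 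By the Vinberg degeneration, $\gr\O(G)$ is the coordinate ring of the affine closure of the horocycle space $\Y$, and correspondingly $\gr\D_G$ is the (monodromic) algebra of differential operators on $\Y$; under this identification the induced moment map is the doubled Beilinson--Bernstein moment map on $\Y$, so $\Asymp\circ\Loc_G(M)=\Loc_\Y(M)$.

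For Proposition \ref{prop:vfiltration}, I would show directly that the matrix coefficients filtration on $\O(\Gad)$ coincides with the multi-V-filtration associated to the $r$ boundary divisors of $\Gbarad$. The key input is that a matrix coefficient of an irreducible representation of highest weight $\lambda$ extends across $\Gbarad$ with vanishing orders along the boundary divisors prescribed by the pairings of $\lambda$ with the simple coroots; the uniqueness of the V-filtration then identifies the two filtrations on $\O(\Gad)$, and the statement for $\D_{\Gad}$ follows by applying the same comparison to filtration-preserving differential operators. Combined with the filtered lift, this identifies $\Asymp$ with Verdier specialization on modules with holonomic localization, since nearby cycles of a holonomic $D$-module is computed by the associated graded of its unique compatible V-filtration.

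The main obstacle is the comparison in Proposition \ref{prop:vfiltration}: one must carefully track how matrix coefficients of irreducible representations extend across the boundary of $\Gbarad$ and match their orders of vanishing along boundary divisors with the simple-coroot coordinates of the highest weight. Once this is in hand, the remaining parts of the theorem are formal consequences of the moment map preserving the filtration and standard properties of nearby cycles on holonomic $D$-modules.
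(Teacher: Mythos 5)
Your proposal is correct and follows essentially the same route as the paper: use that the moment map lands in degree zero, conclude that localization inherits a canonical filtration, identify $\gr\O(G)$ with the affine closure of $\Y$ via the Vinberg semigroup, and deduce that the associated graded of the matrix coefficient $D$-module is doubled Beilinson--Bernstein localization on $\Y$; then compare the matrix coefficient filtration with the $V$-filtration from the boundary divisors of $\Gbarad$. The paper organizes this differently (a general framework for filtered affine $G$-varieties in Section~\ref{sec:relativediff}, then specialization to $X=G$), and its proof of Proposition~\ref{prop:vfiltration} appeals directly to the identification of the multi-Rees space of $\O(G)$ with $\Vinb_G$ rather than tracking orders of vanishing of matrix coefficients along boundary divisors representation-by-representation, but the content is the same.

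Two minor technical points you gloss over that the paper handles explicitly: (i) the claim that $\gr\D_G$ is literally $D$ on $\Y$ requires restricting to a smooth open locus, since $\Spec(\gr\O(G))$ is the (possibly singular) affine closure $\overline{\Y}$; the paper works with the semistable locus $\Vinb_G^{\text{\rm ss}}$ and its fiber over $0$, and proves $f^*\D_\pi=\D_{X_0}$ via the chain Proposition~\ref{prop:rel-Rees=Rees-rel}--Lemma~\ref{lemma:rel-fiber}--Lemma~\ref{lem:reldiff}. (ii) ``Passing to associated graded commutes with the tensor product'' is not automatic; the paper invokes right exactness of $-\ot_{\Ug}M$ at the relevant step. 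Both points are easy to fix but worth stating.
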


\begin{rmk}  The infinitesimal action of $\fg\oplus\fg$ on $G$ gives rise to an action which identifies the left and right actions of the center $\fZ\subset U\fg$:
\begin{equation}\label{factoring center}
U\fg\ot U\fg\longrightarrow U\fg\ot_\fZ U\fg\longrightarrow D(G)
\end{equation} Thus, it is natural to change the source of the localization functor correspondingly, as we will do in Section~\ref{scattering}.
\end{rmk}

The commutative diagram above is naturally strongly $G\times G$-equivariant, and so implies similar localization results for various categories of representations such as $(\fg,K)$-modules. For example, in the ``group case'', imposing equivariance for diagonal $G_\Delta \subseteq G \times G$,  we obtain the localization of Harish-Chandra bimodules
$$\xymatrix{\left(\Ug\ot\Ug\dmod\right)^{G_\Delta}=\mathcal{HC} \ar[r] \ar[dr]& D\left(\frac{G}{G}\right)\filt \ar[d]\\
&D_H\left(\frac{N\backslash G/N^-}{H}\right)},$$
where $\frac{G}{G}$ is the adjoint quotient of $G$ by the conjugation action on itself, and $\frac{N\backslash G/N^-}{H}$ is the quotient of the stack $N\backslash G/N^-$ by the diagonal action of $H$. 

\begin{rmk} Our constructions only use those derivations on the wonderful compactification that preserve the $G \times G$ orbits; those that are transverse to the orbits do not appear. In this sense, the full category of $D$-modules on $\Gbarad$ is too large. Instead, a natural setting is logarithmic $D$-modules on $\Gbarad$ (and their twisted variants), using the fact that $\Gbarad$ is a log-homogeneous space in the sense of~\cite{BrionLoghomogeneousvarieties2007a}, in fact a {\em wonderful variety}, in the sense of~\cite{LunaToutevarietemagnifique1996}. The sheaf of log-differential operators is generated by its global sections, which are identified with $U\fg\ot_\fZ U\fg$. This isomorphism is also proved in~\cite{SagatovLogarithmicDifferentialOperators2017} and used to study the logarithmic Beilinson--Bernstein localization on $\Gbarad$. 
\end{rmk}

\subsection{Partial asymptotics and parabolic restriction}\label{parabolic intro}

The picture of asymptotics of matrix coefficient $D$-modules has a natural generalization associated to arbitrary strata of the wonderful compactification, in which the asymptotics functor is identified with parabolic restriction. Namely, the wonderful compactification has a beautiful inductive structure, in which strata are labeled by conjugacy classes of parabolics, the closures of strata fiber over partial flag varieties, and the fibers are themselves identified with wonderful compactifications of Levi subgroups. The asymptotics of matrix coefficients respects this structure, recovering matrix coefficients for parabolic restrictions along the fibers.

To spell this out, recall that the $G\times G$-orbits in the wonderful compactification $\Gbarad$ are in bijection with subsets $I$ of the Dynkin diagram; we fix a subset $I$ of simple roots and denote the corresponding orbit as $\X_I$. The subset $I$ also determines a conjugacy class of parabolic subgroups; let $P_I$ be a representative of this conjugacy class, $N_I$ its unipotent radical, and $L_I$ the corresponding Levi subgroup. The orbit $\X_I$ fibers over the double partial flag variety $G/P_I \times P_I^- \backslash G$ with fiber equal to the adjoint group $M_I$ of the Levi $L_I=M_IA_I$, while the orbit closure $\overline{\X}_I$ fibers over the same base with fiber the wonderful compactification of the adjoint reductive group $M_I$:
$$\xymatrix{\overline{M}_I\ar[r] & \overline{\X}_I\ar[d]\\
	 & G/P_I\times P_I^-\backslash G}$$
Explicitly,  
$$ \X_I\simeq \left(G/A_I N_I \right) \times_{M_I} \left( N_I^-A_I^-\backslash G \right).$$  The deleted normal cone to the stratum $\X_I$ is an $A_I$-torsor over $\X_I$, identified with the `partial horocycle space':
$$\Y_I := G/N_I\times_{L_I} N_I^-\backslash G.$$ In particular, $\Y_I$ is a partial degeneration of $G$, and passing to partial associated graded modules constructions give rise to a functor:
$$\Asymp_I : D(G)\filt \rightarrow D_{A_I}(\Y_I).$$
Moreover, the fibers of $\Y_I\to G/P_I\times P_I^-\backslash G$ are identified with $L_I$ uniquely up to the action of $A_I$.  

We denote the Lie algebras of $P_I$ and $P_I^-$ by $\frakp_I$ and $\frakp_I^-$, with unipotent radicals $\fraku_I$ and $\fraku_I^-$ and common Levi $\frakl_I$.
Passing to coinvariants for $\fraku_I\oplus \fraku_I^-$ defines the parabolic restriction functor
$$U\g\ot U\g\dmod \longrightarrow U\frakl  \ot U\frakl \dmod,$$
which we can then compose with the matrix coefficients functor for the Levi $L$.

\begin{theorem}[See Theorem \ref{thm:parabolic}]\label{thm:intro:parabolic} Partial asymptotics and parabolic restriction are related by the following commutative diagram:
	\[ \xymatrix{  U\g \ot U\g\dmod \ar[rrrrrr]^{\mbox{\rm parabolic restriction}} \ar[d]^-{\mbox{\rm matrix coefficients} } & &  && & &  U\frakl  \ot U\frakl \dmod \ar[d]_-{ \mbox{\rm matrix coefficients}}    \\ 
	D(G)\filt  \ar[rrr]_-{\mbox{\rm partial asymptotics}} &&& D_{A_I}(\Y_I)\ar[rrr]_-{\mbox{\rm restrict to fiber}}  & & & D_{A} (L) }\] 
\end{theorem}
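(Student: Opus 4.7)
The plan is to extend the proof of Theorem \ref{thm:intro:loc} (the case $I = \emptyset$) by coarsening the matrix coefficients filtration. The weight lattice $\Lambda$ of $G$ surjects onto the character lattice $\Lambda_I$ of the central torus $A_I \subset L_I$, and coarsening the $\Lambda$-filtration on $\O(G)$ through this quotient produces a $\Lambda_I$-filtration whose associated graded ring is $\O(\Y_I)$. This realizes $\Y_I$ as a partial degeneration of $G$ to the orbit $\X_I \subset \Gbarad$; it is the natural partial analogue of Proposition \ref{prop:vfiltration}, and the partial asymptotics functor $\Asymp_I$ is defined by passing to this coarser associated graded. Since the action map $U\g \ot U\g \to \Gamma(G, \D_G)$ respects the full $\Lambda$-filtration by Theorem \ref{thm:intro:loc}, it a fortiori respects its coarsening, so the matrix coefficients functor lifts to $\Lambda_I$-filtered $D$-modules on $G$.

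Passing to associated graded, one obtains the action map $\phi_I : U\g \ot U\g \to \Gamma(\Y_I, \D_{\Y_I})$ for the infinitesimal $G \times G$-action on $\Y_I$, and by the localization formalism the resulting object $\Asymp_I(\Loc_G(V))$ is identified with $\D_{\Y_I} \ot_{\phi_I} V$. The last step is to restrict to the fiber $L$ of the projection $\Y_I \to G/P_I \times P_I^- \backslash G$ over a chosen basepoint, identified as an $L \times L$-space with the Levi $L$ (up to the $A_I$-twist) via the inductive structure of $\Gbarad$. Under this restriction, the map $\phi_I$ composes with pullback $\Gamma(\Y_I, \D_{\Y_I}) \to \Gamma(L, \D_L)$, and a key calculation shows that this composition annihilates $\fraku_I \oplus \fraku_I^-$: geometrically, the left action of $N_I$ and the right action of $N_I^-$ are trivial on the fiber $L = P_I/N_I = P_I^-/N_I^-$, so the corresponding vector fields on $\Y_I$ restrict to zero on $L$. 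The composition therefore factors through $U\frakl \ot U\frakl$ via parabolic restriction, and coincides with the matrix coefficients action map $\psi : U\frakl \ot U\frakl \to \Gamma(L, \D_L)$. Consequently the restriction of $\Asymp_I(\Loc_G(V))$ to $L$ agrees with $\Loc_L$ applied to the parabolic restriction $V / (\fraku_I V + V \fraku_I^-)$, as required.

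The main obstacle is verifying the key calculation in the second paragraph: that on the fiber $L$, the associated graded action of $U\g \ot U\g$ indeed factors through parabolic restriction. This reduces to identifying the vanishing orders of root-space generators under the $\Lambda_I$-filtration---root spaces in $\fraku_I$ (respectively $\fraku_I^-$) carry strictly positive (respectively negative) $A_I$-weight and thus sit in strictly lower pieces of the $\Lambda_I$-filtration than the Levi generators---combined with careful bookkeeping of the balanced product structure $\Y_I = G/N_I \times_{L_I} N_I^- \backslash G$ and its identification with the normal cone to $\X_I \subset \Gbarad$. Once this is established, Theorem \ref{thm:intro:loc} applied to the Levi is not needed: the commutativity follows tautologically from the compatibility of the localization formalism with associated graded and restriction to fibers.
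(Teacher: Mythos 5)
Your approach is essentially the paper's: identify $\Asymp_I \circ \Loc_G$ with $\Loc_{\Y_I}$ via the partial Vinberg degeneration (your coarsened-filtration description is an equivalent packaging of Proposition \ref{prop:asympI}), restrict to the fiber $L$, and argue that $\fraku_I \oplus \fraku_I^-$ acts trivially there so the composed moment map factors through parabolic restriction. The ``key calculation'' you flag but do not carry out is precisely what the paper supplies in the proof of Theorem \ref{thm:parabolic}: after passing to $\tilde{\Y}_I = \frac{G/N_I \times G/N_I^-}{L_I}$ and fixing a lift $(g,h)$ of the base point, it computes the $G\times G$-stabilizer of $i_{\tilde y}(\ell_0)$ explicitly as the $(g,h)$-conjugate of $\{(\ell_0\ell\ell_0^{-1},\ell): \ell\in L_I\}\cdot(\ell_0 N_I\ell_0^{-1}\times N_I^-)$, which both forces the factoring through $U\frakl\ot U\frakl$ and---via Lemma \ref{lem:coinvariants}---identifies the restricted localization with the matrix-coefficient localization for $L_I$; this second identification is not automatic from the vanishing of the $\fraku_I\oplus\fraku_I^-$ vector fields alone and needs the full stabilizer computation.
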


Again, in the case of holonomic modules on $G$ given by localization, we find that the specialization along any stratum is given in elementary algebraic terms.

\subsection{Application: Asymptotics of (actual) matrix coefficients}\label{real groups intro}

In Section~\ref{real groups section} we describe the relation between our construction and the analytic theory of asymptotics of matrix coefficients of admissible representations~\cite{Harish-ChandraDifferentialequationssemisimple1984, WarnerHarmonicAnalysisSemiSimple1972, CasselmanAsymptoticbehaviormatrix1982,MilicicAsymptoticbehaviourmatrix1977,HechtasymptoticsHarishChandramodules1983, HechtCharactersasymptoticsnhomology1983}. Specifically, we explain how the identification of the specialization at infinity of matrix coefficients $D$-modules gives a simple algebraic derivation of the asymptotic description of matrix coefficients, in particular results in~\cite{CasselmanAsymptoticbehaviormatrix1982}, which are themselves modernizations and improvements of classical results of Harish-Chandra~\cite{Harish-ChandraDifferentialequationssemisimple1984}, ~\cite[Vol. II, 9.1.1.1]{WarnerHarmonicAnalysisSemiSimple1972}. Namely, we can read off all the exponents and powers that can appear in the asymptotic expansion of matrix coefficients as functions on a chamber in $\fa_\R$ from the weights and multiplicities of the $\fa$-action on the $\fn$-coinvariants of a representation (for the Iwasawa $\fn$), i.e., from the Beilinson--Bernstein localization at the corresponding point of the flag variety (see Theorem~\ref{asymptotics theorem}).
Parallel results hold for the asymptotics of spherical functions, substituting the asymptotic degeneration of $G/K$ to $G/MN$ (see Section~\ref{scattering}) for the degeneration of $G$ to the horocycle space.

\subsection{Wonderful scattering}\label{scattering}

The static relation between the group and its wonderful compactification has a (multi-temporal) dynamic counterpart given by the Vinberg degeneration of $G$ to its asymptotic cone. On the level of $D$-modules, this relation is given by the Rees construction, which makes Theorem~\ref{thm:intro:loc} and its generalizations evident, as we describe below.
This mirrors the classical relation of asymptotics of eigenfunctions on symmetric spaces with scattering for the wave equation; see Section~\ref{wave section} for the relation of localization with the multi-temporal wave equation of Semenov-Tian-Shansky~\cite{Semenov-Tian-ShanskyHarmonicanalysisRiemannian1976}.

First let us explain a general tautology: localization commutes with passing to graded modules. Suppose we are given a filtered ring $R=\bigcup_i R_i$ and a homomorphism $\mu:A\to R_0$ to the zeroth filtered piece. Then the induction functor $ - \ot_A R: A\dmod\to R\dmod$ naturally factors through filtered $R$-modules, or to graded modules over the Rees construction $\Rees(R)$. Moreover we have a functorial identification
$$\gr(-\ot_A R)\simeq -\ot_A \gr(R)$$
between the associated graded of an induced module and the induction to the associated graded ring. 

We apply this to the case where  $R$ is the coordinate ring of an affine $G$-variety $X$ and $A=\Ug$. The decomposition of $R$ as a direct sum of representations of $G$ does not respect the ring structure in general, but defines a canonical multifiltration of $R$ as a ring indexed by the cone of dominant weights of $G$. The associated Rees construction gives the asymptotic degeneration of $X$, as described (with slight variants) in~\cite{PopovContractionsactionsreductive1987},~\cite[Section 5.1]{GaitsgorySphericalVarietiesLanglands2010}, and~\cite[Section 2.5]{SakellaridisPeriodsHarmonicAnalysis2017}.  Namely, we obtain an $H$-equivariant flat family
$$\pi: \wt{X} \rightarrow \A^r$$ 
of $G$-varieties over $\A^r$, where $r=\dim(H)$, and we regard $\A^r$ as the partial compactification\footnote{As a toric variety for $H^{\text{\rm ad}}$, the affine space $\A^r$ is associated to the cone of dominant weights.} of $H^{\text{\rm ad}}$. The fiber over $1\in H^{\text{\rm ad}} \subset \A^r$ is identified with $X$ itself, while the fiber at $0$ is the {\em asymptotic cone} $\gr(X)$ of $X$ studied in~\cite{PopovContractionsactionsreductive1987}. As shown in~\cite{PopovContractionsactionsreductive1987}, $\gr(X)$ is a {\em horospherical} variety, i.e., the stabilizer of any point contains the unipotent radical of a Borel.  We thus find the following principle (see Section \ref{sec:loccommute}):

\begin{prop}
Let $X$ be an affine $G$-variety with its canonical multifiltration. Localization on $X$ lifts to relative $D$-modules along the degeneration to $\gr(X)$, and specializes to localization on $\gr(X)$: we have a commutative diagram
\[\xymatrix{ &  & \Ug\dmod \ar[d]^{\Loc_{\wt{X}}} \ar[lldd]_{\Loc_{X_0}} \ar[rrdd]^{\Loc_X} &  & \\ & & D_A(\wt{X}/\A^r)  \ar[lld]^{ i_0^* }&   & \\  D_H(X_0) & & & &  D(X)\filt   \ar[llu]^{\text{\rm Rees}}  \ar[llll]^{\text{\rm Asymp } } }  \]
(where we only consider $D$-modules on the smooth loci of the varieties in question).
\end{prop}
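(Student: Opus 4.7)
The plan is to deduce the proposition from the general tautology on localization from a filtered algebra recalled just above, applied to the case $A = \Ug$ and $R = \Gamma(X,\D_X)$. The canonical multifiltration on $\O(X)$ by the cone $\Lambda^+$ of dominant weights, $F_\mu\O(X) := \bigoplus_{\lambda \leq \mu}\O(X)_\lambda$, induces a multifiltration on the ring of global differential operators by $F_\lambda\Gamma(X,\D_X) := \{D : D(F_\mu \O(X))\subseteq F_{\mu+\lambda}\O(X)\ \text{for all}\ \mu\}$. With this definition $\Gamma(X,\D_X)$ becomes a $\Lambda^+$-filtered ring acting on the filtered ring $\O(X)$.

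The central compatibility to verify is that the quantum moment map $\mu: \Ug \to \Gamma(X,\D_X)$ lands in $F_0\Gamma(X,\D_X)$. This holds because each isotypic component $\O(X)_\lambda$ is a $G$-subrepresentation and therefore stable under the fundamental vector fields $\mu(\xi)$ for $\xi\in\fg$: integrating the $\fg$-action recovers the $G$-action, so $G$-stability implies $\fg$-stability. Consequently $\mu(\xi)$ preserves the filtration with zero shift, and by the general tautology the functor $-\ot_{\Ug}\Gamma(X,\D_X)$ factors through filtered $\Gamma(X,\D_X)$-modules, or equivalently through graded modules over the Rees ring $\Rees(\Gamma(X,\D_X))$.

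It remains to identify this Rees ring with the algebra of relative differential operators on the degeneration family. By construction $\Rees(\O(X)) = \O(\wt{X})$ as $\Lambda^+$-graded rings, with the map to $\A^r = \Spec\C[\Lambda^+]$ realizing the family $\pi:\wt{X}\to\A^r$. A local computation on affine charts identifies $\Rees(\Gamma(X,\D_X))$ with $\Gamma(\wt{X},\D_{\wt{X}/\A^r})$, the algebra of relative differential operators over $\A^r$. Granting this identification, the tautology yields the commutative diagram: the Rees lift of $\Loc_X$ is precisely $\Loc_{\wt{X}}$ as an $A$-equivariant relative $D$-module on $\wt{X}/\A^r$, and its pullback $i_0^*$ to the fiber at $0\in\A^r$ corresponds to passage to the associated graded, giving $-\ot_{\Ug}\gr(\Gamma(X,\D_X)) = -\ot_{\Ug}\Gamma(X_0,\D_{X_0}) = \Loc_{X_0}$. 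The main obstacle is the identification of the Rees ring of differential operators with the relative differential operators on the degeneration; it reduces to a local coordinate check exploiting flatness of $\pi$ and the fact that the filtration on $\D_X$ is induced, degree by degree, from that on $\O(X)$.
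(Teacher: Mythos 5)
Your proof follows essentially the same route as the paper: invoke the general tautology on localization from a filtered ring with $A = \Ug$, $R = \Gamma(X,\D_X)$, equip $R$ with the filtration by the order an operator shifts the Peter--Weyl multifiltration on $\O(X)$, observe that $G$-stability of the isotypic components forces $\mu(\Ug)\subseteq F_0 R$, and then match the Rees construction on $D$-modules with relative $D$-modules on the family $\wt{X}\to\A^r$. This is precisely the skeleton of the paper's argument in Section~\ref{sec:loccommute}.

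The one place your proposal is weaker than it should be is the identification
$\Rees(\Gamma(X,\D_X))|_{\wt X}\cong \Gamma(\wt X,\D_{\wt X/\A^r})$, which you describe as reducing to ``a local coordinate check exploiting flatness of $\pi$.'' This is the real technical content, and neither locality nor flatness is the driving mechanism in the paper. The paper establishes the purely algebraic identity $\Rees(\Derv(A))\cong\Derv_{\C[\Lambda]}(\Rees(A))$ (Proposition~\ref{prop:rel-Rees=Rees-rel}) globally; the nontrivial half is surjectivity, proved by decomposing a relative derivation of the Rees algebra into its graded components and checking each lands in the correct filtered piece of $\Derv(A)$. This is then combined with Lemma~\ref{lemma:rel-fiber} and restricted to the smooth open locus $\wt X$ (Lemma~\ref{lem:reldiff}). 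Note that the Rees space $\Spec(\Rees(A))$ is typically singular, so ``working in local coordinates'' on it is not straightforward, and flatness of $\pi$ plays no visible role; what matters instead is that the filtration on $\D_X$ is the derivations filtration (Lemma~\ref{lem:filtDevA}) together with the graded-piece bookkeeping of Proposition~\ref{prop:rel-Rees=Rees-rel}. With that lemma supplied as in the paper, your argument closes correctly.
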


Here $D_A(\wt{X}/\A^r)$ denotes the category of $D$-modules on $\wt{X}$ relative to the base $\A^r$, and equipped with an $A$-equivariant structure. The asymptotics functor of the above proposition can be interpreted in terms of Verdier specialization, as we explain in Section \ref{sec:Verdier}. 

To summarize, let $\overline{X}$ be the GIT quotient of $\wt{X}$ with respect to the torus $H$, so that $\overline{X}$ is compactification of $X$ with $r$ irreducible boundary divisors. Let $M$ be a holonomic $D$-module on $X$. Then $M$ has a (multi-)Kashiwara--Malgrange filtration with respect to the boundary divisors. The Verdier specialization of $M$ is the $D$-module obtained by taking the associated graded of $M$, and defines a $D$-module on the normal cone in $\overline{X}$ to the intersection of the irreducible boundary divisors. The space $X_0$ embeds in this normal cone, and so we can restrict the Verdier specialization of $M$ to $X_0$. On the other hand, the Kashiwara--Malgrange filtration is also compatible with the filtration on $\D_X$, so $M$  lifts to an object in $D(X)\filt$ and we can apply the functor $\Asymp$ from the above proposition. The result matches with the restriction of the Verdier specialization of $M$ to $X_0$. 

We can also refine this construction as in~\cite{SakellaridisPeriodsHarmonicAnalysis2017} by replacing the full torus $H$ by its quotient $A_X$ associated to $X$~\cite{KnopLunaVusttheoryspherical1991} and the base by the affine embedding $A_X\hookrightarrow \overline{A_X}$ associated to the cone of $G$-invariant valuations. In the case $X=G$ considered as a $G\times G$-space, we find a filtration of $\C[G]$ by the weight lattice of $G$ (rather than of $G\times G$) and the result is the Vinberg semigroup $\Vinb_G$ \cite{Vinbergreductivealgebraicsemigroups1995}. It is an $H$-equivariant family interpolating between $G$ and its asymptotic cone, the affine closure of the quasi-affine horocycle space, which is embedded as the open $G\times G$-orbit in the fiber over zero:
$$\Y\subset \Vinb_G|_0={\rm Spec}k[\Y].$$
$$\xymatrix{G\ar[d]  \ar[rr]&& \Vinb_G\ar[d]^-{\pi}&&\ar[ll] \overline{\Y}\ar[d]\\
\{1\}\ar[rr]&& \A^r &&\ar[ll] \{0\}}$$
The $H$-orbits on the base $\A^r$ are in bijection with subsets $I$ of the Dynkin diagram. The fiber over a point in the $I$-orbit has a unique open $G\times G$-orbit, which is identified with $\Y_I$. In this way, the Vinberg family realizes also the partial degenerations of $G$ to each $\Y_I$. Localization along this family produces the results of Sections~\ref{sec:intro:wonderful} and~\ref{parabolic intro}.

More generally, let $\theta: G \to G$ be an involution, $K$ the fixed points of $\theta$, and $X=G/K$ the corresponding symmetric space. Let $P$ be a minimal $\theta$-split parabolic subgroup of $G$, and $P = MAN$ its Langlands decomposition. In this case functions on $X$ are naturally multifiltered by the character lattice of $A$ and we find an $A$-equivariant family $\Vinb_{G/K}$ (studied in particular in~\cite{AbeJacquetFunctorConcini2015, ChenformulageometricJacquet2017}) over a base of dimension the rank of $X$, a completion of $A$, degenerating $X$ to the corresponding horocycle space $G/MN$:
$$\xymatrix{G/K\ar[d]  \ar[rr]&& \Vinb_{G/K}\ar[d]^-{\pi}&&\ar[ll] \overline{G/MN}\ar[d]\\
\{1\}\ar[rr]&& \A^{r_K} &&\ar[ll] \{0\}}$$
Localization on this family identifies the asymptotics of the differential equations satisfied by spherical functions with the equations given by Beilinson--Bernstein localization on $G/MN$.

\subsection{Relation to previous work}\label{previous}

This paper is in the spirit of many recent appearances (including~\cite{AbeJacquetFunctorConcini2015, BouthierDimensionfibresSpringer2015, ChenformulageometricJacquet2017, ChenCasselmanJacquetfunctor2017a,   DrinfeldGeometricconstantterm2016, SchiederGeometricBernsteinAsymptotics2016, Wangreductivemonoidassociated2017}) of the wonderful compactification and Vinberg semigroup in representation theory, many inspired by the seminal papers~\cite{BezrukavnikovGeometrysecondadjointness2015} and~\cite{SakellaridisPeriodsHarmonicAnalysis2017} on asymptotics and scattering in the $p$-adic setting and relating to the earlier~\cite{EmertongeometricJacquetfunctor2004} on Jacquet functors in the real setting. 

The setting of this paper bears a strong similarity to that of~\cite{AbeJacquetFunctorConcini2015,EmertongeometricJacquetfunctor2004, ChenformulageometricJacquet2017, BezrukavnikovCharactermodulesDrinfeld2012}, in which nearby cycles on wonderful compactifications or Vinberg degenerations are combined with Beilinson--Bernstein localization. Specifically, ~\cite{AbeJacquetFunctorConcini2015,EmertongeometricJacquetfunctor2004,ChenformulageometricJacquet2017} use the (usual) Beilinson--Bernstein equivalence on the flag variety to relate Harish-Chandra modules to $B$-equivariant sheaves on $G/K$ (e.g., in the group case $B\times B$-equivariant sheaves on $G$) and then use the degeneration to $B$-equivariant sheaves on $G/MN$ (e.g., the horocycle space) to realize the Jacquet functor. Our construction applies the same degeneration to a completely different class of sheaves on $G/K$ (or $G$) -- the differential equations satisfied by spherical functions (in the group case, matrix coefficients), which only relate to $B$-equivariance in the case of category $\cO$. Thus, the relation to representations and localization is different. In particular, our picture has a more direct relation to the study of matrix coefficients and harmonic analysis. Our results are also more elementary and algebraic, thereby circumventing topology (perverse sheaves and nearby cycles) and dropping requirements of holonomicity, admissibility, fixed infinitesimal character, etc.
 
We also mention recent work of W.~Li \cite{li_regularity_2020}, which introduces the notion of a $K$-admissible $D$-module on a homogeneous $G$-variety, where $K$ is a subgroup of the real group $G$, and relates regular holonomicity of $K$-admissible $D$-modules to spherical properties of $K$ and $Z$. 

Following the ideas of Ginzburg, for a subgroup K of a connected reductive R-group G we introduce the notion of K-admissible D-modules on a homogeneous G-variety Z. We show that K-admissible D-modules are regular holonomic when K and Z are absolutely spherical.

\subsection{Acknowledgments} 

We would like to acknowledge and thank David Nadler, whose ideas heavily influenced our work. This paper grew out of discussions with David Nadler, Jonathan Block, and Nigel Higson as part of the SQuaRE ``The Baum--Connes conjecture and geometric representation theory'' at the American Institute of Mathematics, whose support we are grateful for. In particular we would like to thank Nigel Higson for introducing us to the theory of asymptotics of matrix coefficients and related aspects of harmonic analysis on real groups. We would also like to thank Gwyn Bellamy,  Sam Gunningham, and Quoc P. Ho for numerous enlightening conversations, and an anonymous reviewer for a close reading and detailed feedback. 
DBZ would like to acknowledge the National Science Foundation for its support through individual grants DMS-1103525 and DMS-1705110. IG is grateful for the support of the Advanced Grant ``Arithmetic and Physics of Higgs moduli spaces'' No.\ 320593 of the European Research Council. 

\section{Preliminaries}\label{sec:preliminaries}

For a smooth variety $X$ over $\C$, we denote by  $\Theta_X$ the tangent sheaf, by $\D_X$ the sheaf of differential operators, by $D_X$ the algebra $\Gamma(X, \D_X)$ of global differential operators, and by  $D(X)$ the (abelian) category of $D$-modules on $X$, that is, $\D_X$-modules. Throughout, $G$ denotes a connected semisimple algebraic group over $\C$. Let  $\g = \Lie(G)$ be its Lie algebra, $U\g$ the universal enveloping algebra, and $\fZ$ the center of $U\g$. The coinvariants of a $\g$-module $V$ are defined as the zeroth homology of $\g$ with coefficients in $V$, i.e.,
 $(V)_\fg := V\ot_{U\g} \C\simeq H_0(\g, V).$ 
  
\subsection{Localization and equivariant sheaves}\label{subsec:locequiv} 

Suppose $G$ acts on a smooth variety $X$.  For $x \in X$, write $\bar u_x$ for the image of $u \in \g$ under the differential of the map $a_x : G \rightarrow X;$  $g \mapsto g\inv\cdot x$. The vector field $\bar u$ is called the infinitesimal action of $u \in \g$ on $X$, and we obtain a map of Lie algebras $\g \rightarrow \Gamma(X, \Theta_X)$. The cotangent bundle $T^*X $ has a Hamiltonian symplectic structure with moment map $T^*X \rightarrow \g^*$ given by $(x,\alpha_x) \mapsto [ u \mapsto \langle \alpha_x, \bar u_x\rangle]$. The infinitesimal action extends to an algebra homomorphism
$$\mu: U\g \rightarrow D_X = \Gamma(X, \D_X),$$
which we refer to as a `quantum moment map' or just `moment map' (see e.g.\ \cite{LuMomentmapsquantum1993}). The adjoint action of $u \in U\g$ on $a \in D_X$ is defined as $ u \triangleright a = \mu(u_{(1)}) \cdot a \cdot \mu(S(u_{(2)})).$ The multiplication on $D_X$ is $U\g$-linear, and hence $D_X$ is an algebra  in the tensor category $U\g\dmod$.  

\begin{definition} The localization functor for the action of $G$ on $X$ is defined as 
$$ \Loc_X : U\g\dmod \rightarrow D(X); \qquad M \mapsto \D_X \ot_{U\g} M.$$
\end{definition}

The infinitesimal action defines a a map of Lie algebroids $\fg \ot \O_X \rightarrow \Theta_X$ known as the anchor map. We denote the kernel of the anchor map as  $\underline{\mathfrak s}$. 

\begin{lemma}\label{lem:coinvariants} Suppose $G$ acts  transitively on $X$. Then the localization of  a $U\g$-module $V$ on $X$ is equal to the $\underline{\fraks}$-coinvariants of $V \ot \O_X$: $$\Loc_X(V) = (V \ot \O_X )_{\underline{\fraks}}.$$  Moreover, the fiber of  $\Loc_X(V)$ at $x$ is equal to the coinvariants of $V$ with respect to the Lie algebra of the stabilizer of $x$ in $G$.   \end{lemma}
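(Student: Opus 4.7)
The plan is to exhibit $\D_X$ as an explicit quotient of the crossed product $\O_X \# U\g$ and then use right-exactness of tensor product.

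First I would record that when $G$ acts transitively on $X$, the anchor map $\fg \otimes \O_X \to \Theta_X$ is surjective: at each point the infinitesimal action realizes the tangent space as a quotient of $\fg$. This gives a short exact sequence of $\O_X$-modules
$$0 \to \underline{\fraks} \to \fg \otimes \O_X \to \Theta_X \to 0,$$
so that the fiber $\underline{\fraks}_x$ is precisely the Lie algebra of $\Stab_G(x)$.

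Next, I would establish the presentation
$$\D_X \;\cong\; (\O_X \# U\fg) \big/ (\underline{\fraks}),$$
where $\O_X \# U\fg$ is the smash product built from the $U\fg$-action on $\O_X$ by derivations, and the two-sided ideal is generated by the sheaf $\underline{\fraks} \subset \fg \otimes \O_X \subset \O_X \# U\fg$. The map is the obvious one extending $\O_X \hookrightarrow \D_X$ and the quantum moment map $\mu : U\fg \to D_X$, and it is well-defined since elements of $\underline{\fraks}$ act as zero on $X$ by construction of the anchor. Surjectivity reduces, via the order filtration and the associated graded, to the surjection $\fg \otimes \O_X \twoheadrightarrow \Theta_X$, since $\D_X$ is generated over $\O_X$ by $\Theta_X$. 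Injectivity follows by comparing graded pieces: on associated gradeds the statement becomes $\Sym_{\O_X}(\Theta_X) \cong \Sym_{\O_X}(\fg \otimes \O_X)/(\underline{\fraks})$, which is just the symmetric algebra of the short exact sequence above, using flatness of $\O_X$-modules living inside $\Sym$.

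Granted this presentation, the computation is mechanical: right-exactness of $- \otimes_{U\fg} V$ applied to $\D_X = (\O_X \otimes U\fg)/(\underline{\fraks})$ yields
$$\Loc_X(V) \;=\; \D_X \otimes_{U\fg} V \;=\; (\O_X \otimes V) \big/ \underline{\fraks} \cdot (\O_X \otimes V) \;=\; (V \otimes \O_X)_{\underline{\fraks}},$$
where the action of $\underline{\fraks} \subset \fg \otimes \O_X$ on $V \otimes \O_X$ is the diagonal one (using the $\fg$-action on $V$ and the anchor action on $\O_X$). Taking the fiber at $x$, i.e.\ tensoring over $\O_X$ with $\C_x$, and using right exactness once more, turns $\underline{\fraks}$ into its fiber $\fraks_x = \Lie \Stab_G(x)$, giving $\Loc_X(V)|_x = V_{\fraks_x}$.

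The main obstacle is the graded identification in the second step: one must check that the natural surjection $\Sym_{\O_X}(\fg \otimes \O_X)/(\underline{\fraks}) \to \Sym_{\O_X}(\Theta_X)$ is an isomorphism, i.e., that no ``extra'' relations arise in $\D_X$ beyond those forced by $\underline{\fraks}$. This is the transitivity input — without it one would have to include higher-order PBW-type corrections from the Lie bracket failing to close in $\underline{\fraks}$, but transitivity makes $\underline{\fraks}$ an $\O_X$-Lie subalgebroid and the presentation clean.
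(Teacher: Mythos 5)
Your proof is correct and follows essentially the same route as the paper: both present $\D_X$ as the quotient of $\O_X \# U\fg$ (the paper writes $U\fg \ot \O_X$) by the ideal generated by $\underline{\fraks}$, then tensor with $V$ over $U\fg$ and use right-exactness to get the coinvariant description and the fiberwise claim. You supply more detail on why the presentation of $\D_X$ is exact (comparing associated gradeds via $\Sym$), while the paper asserts the exact sequence directly and makes explicit that transitivity forces $\underline{\fraks}$ to be flat, which is the same fact you invoke implicitly when identifying $\underline{\fraks}_x$ with $\Lie\Stab_G(x)$.
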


\begin{proof} From the anchor map, we obtain an exact sequence of sheaves on $X$:
	$$ 0 \rightarrow \langle \underline{\fraks} \rangle  \rightarrow U\fg \ot \O_X\rightarrow \D_X \rightarrow 0,$$
which is exact on the right since $G$ acts on $X$ transitively. Consequently, for any $U\g$-module $V$ we obtain:
	$$ \underline{\fraks}( V \ot \O_X)   \rightarrow V \ot \O_X\rightarrow \Loc_X(V) \rightarrow 0.$$
The fiber of  $\underline{\fraks}$ over $x\in X$ is equal to the Lie algebra of the stabilizer of $x$ in $G$. Since these stabilizers are all conjugate, $\underline{\fraks}$ is flat as a coherent sheaf on $X$. The last claim  follows. \end{proof}

\begin{example}\label{ex:matrixcoef} Consider the action of $G \times G$ on $G$ by left and right multiplication: $(x,y) \triangleright g$ $=$ $xgy\inv$ for $x,y,g \in G$. The localization functor
	$$\Loc_G : U(\g \times\g)\dmod \rightarrow D(G)$$
is called `matrix coefficients localization' (see Section \ref{sec:intro:wonderful} of the intro). The stabilizer of a point $g \in G$ is given by: $\{ (h, g\inv h g) \in G \times G : h \in G\} $, and its Lie algebra is given by: $\fraks_g = \{ (x, \ad_{g\inv}(x) ) \in \g \times \g : x \in \g\}.$ Thus, the fiber of the localization $\Loc_G(W)$ of a $U\g \ot U\g$-module $W$ at $g \in G$ is the $\fraks_g$-coinvariants of $W$, i.e.\ the quotient of $W$ by the subspace generated by elements of the form $(x, \ad_{g\inv}(x))\cdot w$ for $w \in W$ and $x \in \g$. It is clear that the matrix coefficients localization functor factors through the category $U\g \ot_\fZ U\g\dmod$. 
\end{example}

A $G$-equivariant sheaf on $X$ is a quasi-coherent sheaf $\cF$ on $X$ equipped with an isomorphism $I: a^* \cF \rightarrow p^* \cF$, where $a, p : G \times X \rightarrow X$ are the action and projection maps, respectively, and $I$  satisfies a certain associativity constraint. We refer the reader to \cite{ChrissRepresentationTheoryComplex2009} and \cite{TakeuchiDModulesPerverseSheaves2007} for background information on equivariant sheaves. The structure sheaf $\O_X$ and the sheaf of differential operators $\D_X$ have natural $G$-equivariant structures. 

\begin{definition} A weakly $G$-equivariant $D$-module is a $\D_X$-module $\cF$ on $X$ equipped with a $G$-equivariant structure   $I: a^* \cF \rightarrow p^* \cF$ that is an isomorphism of $\O_G \boxtimes \D_X$-modules\footnote{If, furthermore,  $I$ is an isomorphism of $\D_G \boxtimes \D_X$-modules, then $\cF$ is called a strongly $G$-equivariant $D$-module. We will not use strongly equivariant $D$-modules in this paper.}. We denote the category of such by $D_G(X)$.   \end{definition}

Thus, a weakly $G$-equivariant $D$-module is a $\D_X$-module with a compatible $G$-equivariant structure. If $f : X \rightarrow Y$ is an equivariant map between smooth $G$-varieties, then the pullback functor $f^*$ of $D$-modules factors through the weakly $G$-equivariant categories: $f^* : D_G(Y) \to D_G(X).$

Suppose that $K_1$ and $K_2$ are algebraic groups, and that the product $K_1 \times K_2$ acts on a smooth variety $X$. We have an algebra homomorphism $\mu : U(\frakk_1) \ot U(\frakk_2) \rightarrow D_X.$ The adjoint action of $u \in \U(\frakk_1)$ on $a \in D_X$ is given by $ u \triangleright a$ $=$ $\mu(u_{(1)} \ot 1) \cdot a \cdot \mu(S(u_{(2)}) \ot 1).$ The right multiplication action of $v \in \U(\frakk_2)$ on $a \in D_X$ is given by $  a \triangleleft v = a \cdot \mu(1 \ot v).$

\begin{lemma}\label{lem:commuting} The localization functor for the action of $K_2$ factors through the category  of weakly $K_1$-equivariant $D$-modules on $X$, inducing a functor $\U(\frakk_2)\dmod \rightarrow D_{K_1}(X)$.\end{lemma}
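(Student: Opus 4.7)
The plan is to exhibit a natural weakly $K_1$-equivariant structure on the localization $\Loc_X(M) = \D_X \ot_{U\frakk_2} M$ for any $U\frakk_2$-module $M$, and then observe that this construction is functorial in $M$.

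First I would unpack the $K_1$-equivariance of the basic geometric objects. Since $K_1$ acts on $X$ (through the factor inclusion $K_1 \hookrightarrow K_1\times K_2$), the sheaves $\O_X$ and $\D_X$ carry canonical $K_1$-equivariant structures, giving isomorphisms of $\O_G\boxtimes\D_X$-modules $I_{\D_X}: a^*\D_X \xrightarrow{\sim} p^*\D_X$ on $K_1\times X$. The next step, and the key computation, is to note that because the actions of $K_1$ and $K_2$ on $X$ commute, the infinitesimal action $\bar v$ of any $v \in \frakk_2$ is a $K_1$-invariant vector field on $X$. Equivalently, the image of the quantum moment map $\mu: U\frakk_2 \to D_X$ lies in the $K_1$-invariants of $D_X$ for the adjoint $K_1$-action, and (in the sheaf-theoretic version) the map of sheaves $U\frakk_2 \ot \O_X \to \D_X$ obtained from the anchor map is a map of $K_1$-equivariant sheaves when $U\frakk_2$ is given the trivial $K_1$-action.

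With this in place, I would construct the equivariant structure on $\Loc_X(M)$. The left $U\frakk_2$-module structure on $M$ is also trivially $K_1$-equivariant (again with $U\frakk_2$ acting via the trivial $K_1$-representation). Then the two pullbacks $a^*(\D_X \ot_{U\frakk_2} M)$ and $p^*(\D_X\ot_{U\frakk_2} M)$ can be identified with $(a^*\D_X) \ot_{U\frakk_2} M$ and $(p^*\D_X)\ot_{U\frakk_2} M$ respectively, since tensoring over the constant sheaf $U\frakk_2$ commutes with pullback. The isomorphism $I_{\D_X}$, being right $U\frakk_2$-linear by the $K_1$-invariance above, then descends to an isomorphism $I: a^*\Loc_X(M) \xrightarrow{\sim} p^*\Loc_X(M)$ of $\O_{K_1}\boxtimes\D_X$-modules. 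Checking the cocycle/associativity condition for $I$ follows from the corresponding condition for $I_{\D_X}$ by the same descent argument.

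Finally, functoriality in $M$ is immediate from the construction, since any $U\frakk_2$-linear map $M\to M'$ induces a $U\frakk_2$-linear map on tensor products which is automatically compatible with the constructed equivariance isomorphisms. The main technical point to verify with care is the second step, namely the $K_1$-invariance of $\mu(U\frakk_2)\subset D_X$ and its sheaf-theoretic refinement; the rest of the argument is a formal consequence of that invariance together with the functoriality of pullback and tensor product.
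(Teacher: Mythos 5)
Your proposal is correct and rests on the same key observation as the paper's proof: since the $K_1$- and $K_2$-actions on $X$ commute, the adjoint action of $K_1$ (equivalently, the $K_1$-equivariant structure on $\D_X$) is right $U\frakk_2$-linear, so it descends to the tensor product $\D_X\ot_{U\frakk_2}M$. The only cosmetic difference is that you construct the group-level cocycle isomorphism $I$ directly, whereas the paper defines the corresponding adjoint $\U(\frakk_1)$-action on $\Loc_X(M)$ and invokes local finiteness to get the weak $K_1$-equivariant structure.
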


\begin{proof}  We define a $\U(\frakk_1)$-action on $\Loc(M) = \D_X \ot_{\U(\frakk_2)} M$ via the adjoint action of $\U(\frakk_1)$ on $\D_X$. This action well-defined since the two actions defined in the preceding definition commute, and it is locally finite. \end{proof}

\begin{example} If $K_1 = (\C^\times)^r$ is a torus, then weakly equivariant $D$-modules on $X$ are also known as monodromic, conical, or homogeneous $D$-modules, and alternative notation is $D\submon(X)$. We obtain a $\Z^r$-grading on the algebra  $D_X$. One checks that the quantum moment map for $U(\frakk_2)$ lands in the zero-th graded piece of $D_X$.  \end{example}

\begin{example} Continuing Example \ref{ex:matrixcoef} from above, we have that $K_2 = G \times G$ acts by left and right multiplication on $G$, and this action commutes with the multiplication action of the center $K_1 = Z(G)$. Thus, we obtain a localization functor $\Loc_G : U\g \ot U\g\dmod \to D_{Z(G)}(G)$.  \end{example}

\subsection{Relative differential operators}\label{subsec:rel-diff-ops}

Let $A$ be a commutative ring. For an $A$-bimodule $M$, let $\Derv(A, M)$ denote the $A$-module of derivations from $A$ to $M$. When $M = A$, we abbreviate $\Derv(A,A)$ by $\Derv(A)$. If $A \rightarrow B$ is a map of commutative rings, then, for a $B$-bimodule $N$, let $\Derv_A(B, N)$ denote the $B$-module of relative derivations from $B$ to $N$, that is, derivations that vanish on elements in the image of $A$ in $B$. A sequence of maps $A \rightarrow B \rightarrow C$ between commutative rings induces, for any $C$-bimodule $L$, an exact sequence of $C$-modules: \begin{equation}\label{eq:dervses} 0 \rightarrow \Derv_B(C,L) \rightarrow \Derv_A(C,L) \rightarrow \Derv_A(B,L).\end{equation}
Relative derivations are closely related to relative differential operators, as we now explain. 

Let $X = \Spec(S)$ and $Y = \Spec(R)$ be smooth affine varieties over $\C$  and $f : X \rightarrow Y$ be a flat map. We have a corresponding map of commutative  $\C$-algebras  $R \to S$. 

\begin{definition}\label{def:rel-diff-ops}  The sheaf $\D_f$ of relative differential operators on $X$ (with respect to $f$) is defined as the quasi-coherent sheaf on $X$ corresponding to the subalgebra of $\End_\C(R)$ generated by $\Derv_R(S, S)$ and the left multiplication operators $m_a : b \mapsto ab$ for $a \in S$.     \end{definition}

The sheaf $\D_f$ coincides with the subsheaf of $\D_X$ generated by vector fields on $X$ that commute with functions pulled back from $Y$. In the special case that  $Y = \Spec(\C)$ is a point, we have that  $\D_f$ is full sheaf $\D_X$ of differential operators on $X$.  The proof of the following lemma follows from definitions.	
	
\begin{lemma}\label{lem:reldiff-qmm} Suppose a linear algebraic group $G$  acts on $X$ and on $Y$, and  $f$ is $G$-equivariant.  If $G$ acts trivially on $Y$, then the quantum moment map $U\g \rightarrow \Gamma(X, \D_X)$ factors through the subalgebra $\Gamma(X, \D_f)$. 
\end{lemma}	

For the remainder of this section, fix an open subvariety $U$ of $X = \Spec(S)$ such that the restriction  $ {f}^{\circ} := f|_U : U \to Y$ of $f$ to $U$ is smooth. In particular, for each $y\in Y$, the fiber $U_y$ of $f^\circ$ over $y$ is smooth. 

\begin{definition}\label{def:rel-diff-ops-U} The sheaf $\D_{f^\circ}$ of relative differential operators on $U$ with respect to $f^\circ$ (and the embedding $U \hookrightarrow X$) is defined as the restriction of $\D_f$ to $U$:
	$$\D_{f^\circ} = \D_f |_U.$$	
\end{definition}


\begin{lemma}\label{lem:rel-diff-ops-fiber}
	Let $y \in Y$. The restriction of the sheaf of relative differential operators $\D_{f^\circ}$ to  the fiber $U_y $ coincides with the sheaf of differential operators on ${U_y}$.
\end{lemma}

\begin{proof}[Sketch of proof.] Recall that, by the choice of $U$, the fiber $U_y$ is smooth, so the sheaf of differential operators thereon is well-defined. To prove the claim, one  reduces to the case where $X = U$ and $f$ is smooth. In this case, the Kähler differentials $\Omega_{S/R}$ define a projective $S$-module.  The result follows from the following general facts: First, base change for Kähler differentials gives an isomorphism $\Omega_{S/R} \ot_R \C_y \stackrel{\sim}{\longrightarrow} \Omega_{S_y}$, where $\C_y$ is the one-dimensional $R$-module corresponding to the closed point $y \in Y$, and $S_y = S \ot_R \C_y$.   Second, we have natural isomorphisms $\Derv_R(S,S) = \Hom_R(\Omega_{S/R}, S)$ and $\Derv_\C(S_y, S_y) = \Hom_\C(\Omega_{S_y}, S_y)$.
\end{proof}

 Now suppose a linear algebraic group $G$  acts on $X$ and on $U$, and  that  the embedding $U \hookrightarrow X$ is $G$-equivariant. Suppose further that $G$ acts on $Y$, and the map $f : X \to Y$ is $G$-equivariant. Hence $f^\circ$ is also $G$-equivariant, and there is an action of $G$ on each fiber $U_y$. 

\begin{lemma} Suppose the action of $G$ on $Y$ is trivial.  For any $y \in Y$,  localization on $U$ commutes with restriction to the fiber $U_y$; more precisely, the following diagram commutes:
		\[\xymatrix{U\g \ar[r] \ar[rd] & \Gamma(X,\D_{f^\circ}) \ar[d] \\ & \Gamma(U_y, \D_{U_y})  }  \]
		where the maps emanating from $U\g$ are the quantum moment maps, and the vertical map is restriction. 
\end{lemma}
 
 \begin{proof} Let $x\in U_y$. Since the fiber $U_y$ is a smooth $G$-stable subvariety of $X$, the natural map $\g \rightarrow T_x X$ factors through the subspace $T_x U_y$. In other words, the infinitesimal action of $\g$ at $x$ consists of vector fields on $U_y$. Any function on $X$ that is pulled back from $Y$ is constant on the fibers of $f$, hence its restriction to $U_y \subseteq f\inv(y)$ is annihilated by any vector field on $U_y$. In particular, it is annihilated by the vector fields forming the infinitesimal action of $\g$. These vector fields generate the image of the quantum moment map. \end{proof}

\section{Relative differential operators and Rees spaces}\label{sec:relativediff}

\subsection{Algebraic set-up}\label{subsec:algfiltrations}

Fix a positive integer $r$ and let $\Phi$  be a lattice of rank $r$. Fix a linearly independent set $\{\beta_1, \dots, \beta_r\}$ of elements of $\Phi$. Consider the  full-rank sublattice  $\Z\langle \beta_1, \dots, \beta_r \rangle$ of $\Phi$ generated by the $\beta_i$. The set $\Gamma$ of cosets of this sublattice in $\Phi$ forms a finite group.  Define a partial order on $\Phi$ by setting $\mu \leq \lambda$ if and only if $\lambda - \mu$ is a linear combination of the $\beta_i$ with non-negative coefficients; in symbols:
\begin{equation}\label{eqn:partialorder} \mu \leq \lambda \quad \Leftrightarrow \quad \lambda - \mu = \sum_{i=1}^r  n_i \beta_i \quad \text{with $n_i \geq 0$ for all $i$}. \end{equation}

\begin{lemma}\label{lem:parialorder}  If $\phi_1$ and $\phi_2$ belong to the coset $\gamma \in \Gamma$, then there exists $\phi \in \gamma$ such that $\phi_1 \leq \phi$ and $\phi_2 \leq \phi$. If $\gamma_1$ and $\gamma_2$ are distinct cosets in $\Gamma$, then no element of $\gamma_1$ is related to an element of $\gamma_2$ under the partial order $\leq$. 
 \end{lemma}

\begin{proof} To prove the first statement, suppose $\phi_1$ and $\phi_2$ belong to the coset $\gamma \in \Gamma$. Then $\phi_2 = \phi_1 + \sum_i n_i \beta_i$ for some $n_i \in \Z$, and  $\phi = \phi_1 + \sum_i \max(n_i, 0) \beta_i$ satisfies the desired property. For the second statement, the contrapositive is clear from \ref{eqn:partialorder}.\end{proof}

\begin{definition}\label{def:filteredvs}
	 A $(\Phi, \leq)$-filtration on a vector space $V$ is the data of a subspace $V_{\leq \lambda} \subseteq V$ for every $\lambda \in \Phi$ subject to the following conditions:
	\begin{itemize}
		\item  If $\mu \leq \lambda$, then $V_{\leq \mu }   \subseteq V_{\leq \lambda}$. 
		\item  $V$ admits a direct sum decomposition $V = \bigoplus_{\gamma \in \Gamma} \left( \bigcup_{\lambda \in \gamma } V_{\leq \lambda} \right)$.
	\end{itemize}
\end{definition}

When the partial order is clear from context, we write `$\Phi$-filtration' instead of `$(\Phi, \leq)$-filtration'. Let $\C[\Phi]$ denote the group algebra of $\Phi$, which is generated by elements $t^\lambda$ for $\lambda \in \Phi$ subject to the condition $t^{\lambda}  \cdot t^{\mu} = t^{\lambda + \mu}$. For $\gamma \in \Gamma$, let $\C[\gamma]$ be the vector subspace of $\C[\Phi]$ spanned by all $t^\lambda$ with $\lambda \in \gamma$. Then $\C[\Phi] = \bigoplus_{\gamma \in \Gamma} \C[\gamma]$. 

\begin{definition}\label{def:filteredalgebra}	Let $A$ be a unital, associative algebra over $\C$. A $\Phi$-filtration on $A$ is the data of a $\Phi$-filtration on $A$ satisfying the following condition: if $a \in A_{\leq \mu}$ and $b \in A_{\leq \lambda}$, then the product $ab$ belongs to $A_{\leq \mu + \lambda}$.  The  Rees algebra corresponding to $A$ is defined as the following subalgebra of  $A \ot \C[\Phi]$: $$\Rees(A) = \bigoplus_{\lambda \in \Phi} A_{\leq \lambda} t^\lambda.$$
The associated graded algebra $\gr(A)$ of a $\Phi$-filtered algebra is the $\Phi$-graded algebra whose $\lambda$-th graded piece is the quotient of $A_{\leq \lambda}$ by the the union of all subspaces $A_{\leq \mu}$ with $\mu \leq \lambda$. 
\end{definition}

We note that, for each $\gamma\in \Gamma$, the subspace $A_\gamma = \bigcup_{\lambda \in \gamma} A_{\leq \lambda}$ is a subalgebra of $A$, and is non-unital  unless $\gamma$ is the identity element of $\Gamma$. Moreover,  $\Rees(A) = \bigoplus_{\gamma \in \Gamma} \Rees(A_\gamma)$.  The algebra $A$  is unital with $1 \in A_{\leq 0}$, and hence $1 \in A_{\leq \beta_i}$ for any $i$. Thus, there is an algebra homomorphism
\begin{equation}\label{eqn:maptoRees}
\C[t^{\beta_1}, t^{\beta_2}, \dots, t^{\beta_r}] \longrightarrow \Rees(A) 
\end{equation}
from the subalgebra $\C[t^{\beta_1}, t^{\beta_2}, \dots, t^{\beta_r}]$ of  $\C[\Phi]$ generated by the $t^{\beta_i}$ (this is a polynomial algebra in $r$ variables) to $\Rees(A)$, taking $t^{\beta_i} $ to $1 \cdot t^{\beta_i}  \in A_{\leq \beta_i} t^{\beta_i}$.  For $p = (p_1, p_2, \dots, p_r) \in \Z^r$, let  $A_p$ be the quotient of $\Rees(A)$ by the ideal generated by the elements $t^{\beta_i}-p_i$ for $i = 1, \dots, r$. The following lemma follows by direct computation. 

\begin{lemma}
If $p_i \neq 0$ for all $i$, then  $A_p$ is isomorphic to $A$.  If $p=0$, then $A_0 = \gr (A)$. The localization $\Rees(A)[(t^{\beta_1})\inv , (t^{\beta_2})\inv, \dots,  (t^{\beta_r})\inv]$ is isomorphic to $\bigoplus_{\gamma \in \Gamma} A_\gamma \ot \C[\gamma]$. 
\end{lemma}

\begin{definition} 	Let $A$ be an algebra over $\C$ with a $\Phi$-filtration. A filtered left $A$-module is a left $A$-module $M$ together with the data of $\Phi$-filtration on $M$ satisfying the following condition:  if $a \in A_{\leq \mu}$ and $m \in M_{\leq \lambda}$, then the element $am $ belongs to $M_{\leq \mu + \lambda}$. One defines the notions of a filtered right $A$-module and a filtered $A$-bimodule similarly. 
\end{definition}

Let $A\dmod\filt$ denote the category of filtered (left) $ A$-modules. We also have the categories $\Rees(A)^\gr$ and $\gr (A)\dmod^\gr$ of graded modules for $\Rees(A)$ and the associated graded $\gr(A)$ of $A$. There is a `Rees' functor
$$\rho : A\dmod\filt \longrightarrow \Rees(A)\dmod^\gr$$ taking a filtered $A$-module $M$ to the graded $\Rees(A)$-module $\bigoplus_\lambda M_{\leq \lambda} t^\lambda$. There is also a `restriction to zero' functor 
$$\zeta :  \Rees(A)\dmod^\gr  \longrightarrow \gr(A)\dmod^\gr $$ 
obtained by evaluating each $t^{\beta_i}$ to zero. The composition $\gr = \zeta \circ \rho$ takes a filtered $A$-module to its associated graded as a $\gr(A)$-module, and we have the following commutative diagram:

\begin{equation} \label{diag:reesgr} \xymatrix{ & \Rees(A)\dmod^\gr \ar[ld]_{  \zeta  }  &  \\ \gr(A)\dmod^\gr &   & A\dmod\filt  \ar[ul]_{\rho} \ar[ll]^{\gr} }  \end{equation}

\subsection{Derivations and Rees algebras}\label{subsec:derviRees}

Let $A$ be an algebra with a $\Phi$-filtration. For $\lambda \in \Phi$, consider the following subspace
$$\Derv(A)_{\leq \lambda} = \{ \theta \in \Derv(A) \ | \ \theta(A_{\leq \lambda^\prime}) \subseteq A_{\leq \lambda^\prime + \lambda} \ \text{\rm for all $\lambda^\prime$ in $\Phi$} \}$$ 
of the module $\Derv(A) = \Derv_\C(A,A)$ of derivations of $A$. Observe that the subspace $\bigoplus_{\gamma \in \Gamma}  \left( \bigcup_{\lambda \in \gamma} \Derv(A)_{\leq \lambda } \right) $ of $\Derv(A)$ is a filtered $A$-module. Although this  submodule is proper in general,  in all our intended applications it will turn out to be all of  $\Derv(A)$ (c.f.\ Corollary \ref{cor:derivfilt}). We will consider the $A \ot \C[\Phi]$-submodule of $\Derv(A) \ot \C[\Phi]$ given by $\bigoplus_{\lambda \in \Phi} \Derv(A)_{\leq \lambda}t^\lambda$. 

We abbreviate the polynomial algebra $\C[t^{\beta_1}, t^{\beta_2}, \dots, t^{\beta_r}] $   by $\C[\{t^{\beta_i}\}]$. The homomorphism $\C[\{t^{\beta_i}\}] \rightarrow \Rees(A)$  from \ref{eqn:maptoRees} induces a short exact sequence of $\Rees(A)$-modules:
$$ 0 \rightarrow \Derv_{\C[\{t^{\beta_i}\}]}(\Rees(A)) \rightarrow \Derv(\Rees(A)) \rightarrow \Derv\left(\C[\{t^{\beta_i}\}], \Rees(A)\right).$$   A derivation $\theta$ of $\Rees(A)$ belongs to the relative differentials $\Derv_{\C[\{t^{\beta_i}\}]}(\Rees(A))$ if and only if $\theta(t^{\beta_i}) =0$ for  $i = 1, \dots, r$. 

\begin{prop}\label{prop:rel-Rees=Rees-rel} There is an isomorphism of $\Rees(A)$-modules
	$$ \tau:  \bigoplus_{\lambda\in \Phi} \Derv(A)_{\leq \lambda}t^\lambda  \stackrel{\sim}{\longrightarrow} \Derv_{\C[\{t^{\beta_i}\}]} (\Rees(A))$$
	where, for  $\theta \in \Derv(A)_{\leq \lambda}$, the derivation $\tau(\theta t^\lambda)$  takes $a t^\mu \in  A_{\leq \mu}t^\mu$  to $\theta(a)t^{\mu + \lambda}$. \end{prop}

\begin{proof} We give the proof in the case $r = 1$. The general argument follows similarly.  First we show that $\tau$ is well defined. Let  $\theta \in \Derv(A)_{\leq n}$, $a \in A_{\leq m}$, and $b \in A_{\leq k}$. Then 
	\begin{align*} \tau(\theta t^n)(at^m b t^k)  = \theta(ab) t^{n+m+k} &= \theta(a) b  t^{n+m+k} + a \theta(b)  t^{n+m+k} \\  &= \left( \tau(\theta t^n)(at^m) \right) b t^k + at^m \left( \tau(\theta t^n)(b t^k)\right),\end{align*}
	so $\tau(\theta t^n)$ is a derivation. Moreover, $\tau(\theta t^n)(t) =\theta(1)t^{n+1} = 0$ since $\theta$ is a derivation of $A$. Thus  $\tau(\theta t^n)$  is a relative derivation. 
	
	To see that $\tau$ is injective, let  $\sum_{n \in \Z} \theta_n t^n$ be an element of $\bigoplus_{n \in \Z} \Derv(A)_{\leq n}t^n$ and suppose  $\tau\left(\sum_{n \in \Z} \theta_n t^n\right) =0$ . Then $\sum_{n \in \Z} \theta_n(a) t^{n +m} = 0 $ for all 	 $a  \in  A_{\leq m}$. Hence $\theta_n(a) = 0 $ for all $n \in \Z$, and thus $\theta(a) =0$ for all $a \in A_{\leq m}$. It follows that $\theta=0$. 
	
	Before proving surjectivity, we set  $\pi_k : \Rees(A) \rightarrow A_{\leq k}$ to be the projection onto the $k$-th graded piece. The following diagram commutes:
	\begin{equation}\label{diag:projmultt}  \xymatrix{\Rees(A) \ar[rr]^{ - \cdot t^{N\beta_1}  } \ar[d]_{\pi_k} & &  \Rees(A)  \ar[d]^{\pi_{k +N\beta_1}}\\ A_{\leq k} \ar@{^{(}->}[rr] & & A_{\leq k + N\beta_1}  }  \end{equation}
	where the top horizontal map is multiplication by $t^{N\beta_1}$ and the bottom horizontal map is the inclusion. 
	
	We now show that $\tau$ is surjective. Let   $D \in \Derv_{\C[t^{\beta_1}]}(\Rees(A))$. For $n, m \in \Z$, define a linear map $\theta_{n,m} : A_{\leq m} \rightarrow A$ by:
	$$a \mapsto (\pi_{n+m} \circ D) (at^m).$$
	In other words, $\theta_{n,m}$ assigns to $a \in A_{\leq m}$,the coefficient of $t^{n+m}$ in $D(a t^m)$.  We argue that the maps $\theta_{n,m}$ are compatible on the filtered pieces of $A$, and assemble to a well-defined map  $\theta_n : A \to A$, which is in fact a derivation, and  belongs to $\Derv(A)_{\leq n}$. 

	\begin{itemize}
		\item 	To prove that the $\theta_{n,m}$ are compatible, we demonstrate that, for any $m \in \Z$ and any positive integer $N$, the restriction of $\theta_{n, m + N \beta_1}$ to $A_{\leq m} \subseteq A_{\leq m + N\beta_1}$ is equal to $\theta_{n,m}$.  Indeed, let  $a \in A_{\leq m} \subseteq A_{\leq m + N\beta_1}$. Then 	
		\begin{align*}
		\theta_{n,m + N\beta_1}(a) &=		\left(\pi_{n+m+{N\beta_1}} \circ D\right) \left(at^{m+{N\beta_1}}\right) =  \pi_{n+m+{N\beta_1}} \left(D \left(at^m t^{N\beta_1} \right)\right)\\ 
		&= \pi_{n+m+{N\beta_1}} \left(D \left(at^m\right) t^{N\beta_1} \right) = \left(\pi_{n+m} \circ D\right)\left(at^m\right) = \theta_{n,m}(a).
		\end{align*}
		where we use the fact that $D$ is a relative derivation in the third equality, and the commutativity of diagram \ref{diag:projmultt} in the fourth equality.
		
		\item To see that $\theta_n$ is a derivation, let $a \in A_{\leq m}$ and $b \in A_{\leq k}$. Then 
		\begin{align*} \theta_n(ab) &=  (\pi_{n+m+k} \circ D) (at^m bt^k) t^{-(n+m+k)} =  \pi_{n+m+k} (  D (at^m) bt^k + at^m D(bt^k)) t^{-(n+m+k)} \\ &=  (\pi_{n+m+k} \circ  D) (at^m) b  t^{-(n+m)}  + a (\pi_{n+k} \circ D)(bt^k) t^{-(n+k)}   = \theta_n(a) b + a \theta_n(b). \end{align*}
		
		\item To see that $\theta_n$ belongs to $\Derv(A)_{\leq n}$, note that if $ a\in A_{\leq m}$, then the definition of $\theta_{n,m}$ implies that $\theta_n(a)$ belongs to $A_{\leq n+m}$. 
		
	\end{itemize}
	Therefore, we have that  $\sum_n \theta_n t^n$ belongs to $\bigoplus_{n \in \Z} \Derv(A)_{\leq n}t^n$. Moreover,  $\tau \left( \sum_n \theta_n t^n  \right) = D$, since, for $a \in A_{\leq m}$ we have that $$\tau\left( \sum_n \theta_n t^n\right)(a t^m) = \sum_n \theta_n(a) t^{n+m} = \sum_n \left(\pi_{n+m} D\right) (a) = D(a).$$ We have shown that $\tau$ is surjective. This completes the proof.   \end{proof}

\subsection{Localization commutes with associated graded}\label{sec:loccommute}

Let $A$ be a commutative $\Phi$-filtered algebra over $\C$ and let $X = \Spec(A)$ be the corresponding affine scheme. As noted in Section \ref{subsec:algfiltrations} above, the subalgebra  of  $\C[\Phi]$ generated by the $t^{\beta_i}$ is a polynomial algebra in $r$ variables, and hence we identify it in a natural way with the coordinate algebra $\O(\A^r)$ of affine $r$-space. The algebra homomorphism \ref{eqn:maptoRees} induces a map of affine schemes
$$\VinbX :=  \Spec(\Rees(A)) \rightarrow \A^r$$
whose generic fiber is $X = \Spec(A)$ and whose fiber over $0$ is $\Spec(\gr (A))$.  

Recall that $\Phi$ denotes a lattice of rank $r$. We identify its group algebra $\C[\Phi]$ with the coordinate algebra of an algebraic torus $T$ of rank  $r$. The spectrum of any commutative  algebra graded by $\Phi$ has an action of the torus $T$. In particular, the affine schemes $\VinbX$ and $\Spec(\gr (A))$ both carry $T$-actions.  Since we would like to consider $D$-modules on these spaces, as well as differential operators relative to the map $\VinbX  \rightarrow \A^r$, we henceforth make the following assumptions:
\begin{enumerate}
	\item  $X = \Spec(A)$ is smooth. 
	
	\item $\VXcirc$ is a $T$-equivariant open subvariety of $\VinbX= \Spec(\Rees(A)) $ such that the restriction $ \pi : \VXcirc \rightarrow \A^r$ satisfies:
	\begin{itemize}
		\item the map $\pi$ smooth. 
		\item  the generic fiber of $\pi$ is $X$.
	\end{itemize}
\item We have:	$$\Derv(A) = \bigoplus_{\gamma \in \Gamma}  \left( \bigcup_{\lambda \in \gamma} \Derv(A)_{\leq \lambda } \right),$$
where, as in Section \ref{subsec:algfiltrations}, $\Gamma$ is the group of cosets of the sublattice $\Z\langle \beta_1, \dots, \beta_r \rangle$ of $\Phi$. 
\end{enumerate}

Recall that the algebra $D_X = \Gamma(X, \D_X)$ of global differential operators on $X$ is the subalgebra of $\End_\C(A)$ generated by $A$ (acting by left multiplication) and $\Derv(A)$.  Assumption (3) above implies that there is a $\Phi$-filtration on $\Derv(A)$ compatible with that on $A$. Hence $D_X$ also carries a $\Phi$-filtration, and we can consider its Rees algebra $\Rees(D_X)$. Let $\Delta : \Rees(A)\dmod \longrightarrow \QCoh(\VinbX)$ denote the localization functor, which is an equivalence (with inverse given by the global sections functor).  Let $\mathcal R \D_X = \Delta(\Rees(D_X))$ denote the sheaf of rings on $\VinbX$  determined by $\Rees(D_X)$. 

Observe that we are in the setting of Section \ref{subsec:rel-diff-ops}, where ${\mathbb{V}}^\circ_X$, ${\mathbb{V}}_X$, and $\A^r$ play the roles of $U$, $X$, and $Y$, respectively. Consequently, we have the sheaf $\D_\pi$ of differential operators on $\VXcirc$ relative to $\pi$ (Definitions \ref{def:rel-diff-ops} and \ref{def:rel-diff-ops-U}).
	
\begin{lemma}\label{lem:resRDX=Dpi}
	The restriction  of the sheaf  $\mathcal R \D_X$ to $\VXcirc$ is coincides with the sheaf $\D_\pi$  of  differential operators on $\VXcirc$ relative to the map $\pi$. 
\end{lemma}

\begin{proof}
	By definition, $\D_\pi$ is the restriction to $\VXcirc$ of the subalgebra of $ \Delta(\End(\Rees(A)))$ generated by $\Delta(\Rees(A))$ acting by left multiplication and $\Delta(\Derv_{\C[\{t^{\beta_i}\}]} (\Rees(A)  ))  = \Delta(\Rees(\Derv(A)  ))$, where we use Proposition  \ref{prop:rel-Rees=Rees-rel} in the last equality. Hence it is the restriction to $\VXcirc$ of the image under $\Delta$ of the subalgebra of $\End_\C(\Rees(A))$ generated by $\Rees(A)$ and $\Rees(\Derv(A)  )$. On the other hand, $\mathcal R \D_X$ is the image under $\Delta$ of $\Rees(D_X)$, which is exactly the subalgebra of  $\End_\C(\Rees(A))$ generated by $\Rees(A)$ and $\Rees(\Derv(A)  )$. \end{proof}

Let $X_0$ be the fiber of $\pi$ over $0$. The sheaves $\mathcal R \D_X$,  $\D_{\pi}$,  and  $\D_{X_0}$ are naturally $T$-equivariant, and we have the corresponding categories of graded (i.e.\ weakly $T$-equivariant) modules:  $\mathcal R\D_X\dmod^\gr$, $D(\pi)^\gr$, and $D(X_0)^\gr$, respectively.  Let $D(X)\filt$ be the category whose objects are $\D_X$-modules $\cM$ equipped with a $\Phi$-filtration on the global sections   $\Gamma(X, \cM)$ making the latter a $\Phi$-filtered $\D_X$-module.   We note that, since $X = \Spec(A)$ is affine, the category $D(X)\filt$ is equivalent to the category $D_X\dmod\filt$.  There is a `Rees' functor $D(X)\filt \longrightarrow \mathcal R \D_X\dmod^\gr$ taking a filtered $D$-module $\cM$ to the graded $\mathcal R \D_X$-module $\bigoplus_\lambda \cM_{\leq \lambda} t^\lambda$.  Composing with the restriction to ${\mathbb{V}}^\circ_X$, we obtain (via Lemma \ref{lem:resRDX=Dpi}) a functor:
$$ \rho : D(X)\filt \longrightarrow D(\pi)^\gr$$
Let $\iota : X_0 \hookrightarrow \VXcirc$ be the inclusion (which is $T$-equivariant). By Lemma \ref{lem:rel-diff-ops-fiber}, the restriction $\iota^* \D_\pi$ of $\D_\pi$ to $X_0$ is equal to the sheaf $\D_{X_0}$ of differential operators on $X_0$.  Thus we obtain a functor $$\zeta : D(\pi)^\gr \longrightarrow D(X_0)^\gr$$
given by pulling back along $\iota$. The composition $\gr = \zeta \circ \rho : D(X)\filt \longrightarrow D(X_0)^\gr$ computes the associated graded of a filtered $D$-module on $X$, and we have the following commutative diagram:
\begin{equation} \label{diag:reesgrD} \xymatrix{ & D(\pi)^\gr \ar[ld]_{  \zeta  }  &  \\ D(X_0)^\gr &   & D(X)\filt  \ar[ul]_{\rho} \ar[ll]_{\gr} }  \end{equation}

Now we suppose that a linear algebraic group $K$ acts on $X= \Spec(A)$ such that the corresponding action on $A$ preserves the filtered pieces $A_{\leq \lambda}$. Then $K$ acts naturally on ${\mathbb{V}}_X = \Spec(\Rees(A))$, and the $K$-action commutes with the $T$-action. We assume that the subvariety $\VXcirc \subseteq {\mathbb{V}}_X$ is stable under the $K$-action; consequently, we additionally obtain an action of $K$ on $X_0$ commuting with that of $T$. 

\begin{prop}\label{prop:loccommutesassocgr} The localization functors for $X$, $X_0$, and ${\mathbb{V}}^\circ_X$ naturally factor as follows:
$$\U\k\dmod \rightarrow D({X})\filt \qquad \U\k\dmod \rightarrow D({X_0})^\gr \qquad \U\k\dmod \rightarrow D(\pi)^\gr. $$ Moreover, localization commutes with taking associated graded, and we have the following commutative diagram:
\[\xymatrix{ &  & U\k\dmod \ar[d]^{\text{\rm Loc}_{\pi}} \ar[lldd]_{\text{\rm Loc}_{X_0}} \ar[rrdd]^{\text{\rm Loc}_{X}} &  & \\ & & D({\pi})^\gr  \ar[lld]^{ \zeta } &   & \\  D(X_0)^\gr & & & &  D({X})\filt  \ar[llll]^{\gr }  \ar[llu]^{\rho}  }   \]
\end{prop}

\begin{proof} Since the action of $K$ and $T$ on $X_0$ commute, we have from Lemma \ref{lem:commuting} that the localization functor for $X_0$ factors through $D({X_0})^\gr$. Similar considerations, together with Lemma \ref{lem:reldiff-qmm} imply that the localization functor for $\VXcirc$ factors through $D(\pi)^\gr$. 
	
Next, we consider the quantum moment map  $U\k \rightarrow D_X$.  Since each $A_{\leq \lambda}$ is $K$-stable, the map  factors through the zero-th filtered piece, i.e., it induces a map $U\k \rightarrow (D_X)_{\leq 0}$. In particular,  $(D_X)_{\leq \lambda}$ carries an action of $U\k$ for any $\lambda \in \Phi$, and we can form the tensor product $(D_X)_{\leq \lambda} \ot_{U\k} M$ and define a filtration on $D_X \ot_{U \g} M$ as:
$$ (D_X \ot_{U\k} M)_{\leq \lambda} :=  (D_X)_{\leq \lambda} \ot_{U\k} M.$$
Since a $\Phi$-filtration on a $D$-module on $X$ is the same as a $\Phi$-filtration on its global sections, we obtain in this way a $\Phi$-filtration on $\Loc_X(M)$, and a factorization of the localization functor for $X$ through $D({X})\filt$. 

It remains to show that the diagram commutes. We have that $\gr = \zeta \circ \rho$ by definition. We need to verify that $\zeta$ and $\rho$ each commute with the appropriate localization functors. It is straightforward to check the claim for $\zeta$. We give the computation for  $\rho$. Recall that  $\Delta : \Rees(A)\dmod \longrightarrow \QCoh(\mathbb{V}_X)$ denotes the localization functor, which is an equivalence. For any $U\k$-module $M$, we compute:
\begin{align*}
\rho \left( \Loc_{X} (M) \right) & =   \rho\left(\D_X \ot_{U \k} M      \right) =  \Delta \left( \Rees \left(  D_X \ot_{U \k} M   \right)   \right) \biggr\rvert_{{\mathbb{V}}^\circ_X}  =   \Delta \left( \bigoplus_{\lambda \in \Phi} \left(  D_X \ot_{U \k} M   \right)_{\leq \lambda} t^\lambda   \right) \biggr\rvert_{\VXcirc}   \\
& = \Delta \left( \bigoplus_{\lambda \in \Phi}  \left( \left(  D_X   \right)_{\leq \lambda} t^\lambda  \right) \ot_{U \k} M \right)  \biggr\rvert_{{\mathbb{V}}^\circ_X}  = \left( \Delta \left( \bigoplus_{\lambda \in \Phi}  \left(  D_X   \right)_{\leq \lambda} t^\lambda   \right)   \ot_{U \k} M \right)  \biggr\rvert_{\VXcirc} \\
& = \left( \mathcal R \D_X  \ot_{U \k} M \right)  \biggr\rvert_{{\mathbb{V}}^\circ_X} = \left(\mathcal R \D_X  \biggr\rvert_{\VXcirc}\right)   \ot_{U \k} M = \D_\pi \ot_{U \k} M  = \Loc_{\pi} (M)
\end{align*}
\end{proof}

\section{The wonderful compactification}\label{sec:wonderful}

\subsection{Notation}\label{sec:notation}

Let $G$ be a connected semisimple algebraic group over $\C$ with Lie algebra $\g$. Fix a Borel subgroup $B$ and a maximal torus $H$ contained in $B$.  Write $\fb$ and $\fh$ for the corresponding Lie subalgebras of $\g$. The Borel subgroup $B$ has unipotent radical $N:=R_u(B)$, with Lie algebra $\mathfrak{n}$, and it has an opposite Borel subgroup $B^-$ uniquely characterized by the property that $B \cap B^- = H$. Let $N^-$ denote the unipotent radical of $B^-$.  Let $r$ be the rank of $G$. Write $Z = Z(G)$ for the center of $G$, and $\Gad = G/Z(G)$ for the adjoint group of $G$. 

The weight lattice $\Lambda_W$ of $\g$ is generated by the fundamental weights $\omega_1, \dots, \omega_r$. The weight lattice contains the cone $\Lambda_W^+$ of dominant weights. The interior of $\Lambda_W^+$ is the set of regular dominant weights. Thus, dominant weights comprise the nonnegative linear combinations of the fundamental weights, and regular dominant weights comprise the positive linear combinations of fundamental weights. For a root $\alpha \in \fh^*$, we write $\g_\alpha \subseteq \g$ for the corresponding root subspace of $\g$.  Fix a set of positive simple roots $\{ \alpha_1, \dots, \alpha_r\}$ of $H$ relative to $B$. These are linearly independent in $\Lambda_W$ and  generate the root lattice $\Lambda_R$, and we use the set $\Delta = \{1, \dots, r\}$ to index the positive simple roots.

\begin{definition}\label{def:gorder} Define a partial order on $\Lambda_W$ by setting $\mu\leq \lambda$ whenever $\lambda-\mu$ is a  linear combination of positive simple roots with nonnegative coefficients. That is, 
$$ \mu \leq \lambda \quad \Leftrightarrow \quad \lambda - \mu  = \sum_{i=1}^r n_i \alpha_i \quad \text{with $n_i \geq 0$ for all $i$. }$$ 	
Similarly, we write $\lambda < \mu$ if $\lambda \leq \mu$ and $\lambda \neq \mu$. This partial order is referred to as the dominance ordering on the weight lattice $\Lambda_W$ \end{definition}

The weight lattice $\Lambda_G$ of $G$ is the character lattice $X^*(H)$ of the maximal torus. We have inclusions of lattices: $\Lambda_R \subseteq \Lambda_G \subseteq \Lambda_W$. The quotient of $\Lambda_G$ by $\Lambda_R$ can be identified with the center $Z(G) $ of $G$, which is a finite group. The set of isomorphism classes of finite-dimensional irreducible representations of $G$ are in bijection with points in the cone $\Lambda_G^+ := \Lambda_W^+ \cap \Lambda_G$ of dominant weights for $G$. We denote by $V_\lambda$ the irreducible representation corresponding to $\lambda \in \Lambda_G^+$. Points in the interior of $\Lambda_G^+$ are called regular dominant weights for $G$. In a context where the group $G$ is fixed, we write $\Lambda$ instead of $\Lambda_G$.

Let $P$ be a parabolic subgroup of $G$ with unipotent radical $N_P = R_{\rm u} (P)$, and let $L$ be the quotient of $P$ by $N$. We denote the corresponding Lie algebras as $\frakp = \Lie(P)$,  $\frakn_P = \Lie(N_P)$, and $\frakl = \Lie(L)$.  Since $\frakl$ is the quotient of $\frakp$ by the  Lie subalgebra $\frakn$, it follows that the $\frakn_P$-coinvariants of any $\frakp$-module define a $\frakl$-module. 

\begin{definition} The functor of parabolic restriction on $U\g\dmod$ with respect to $\frakp$ is given by restricting to $U\frakp\dmod$ and then taking $\frakn_P$-coinvariants:
	\begin{align*} \text{\rm res}_{\frakp} : U\g\dmod & \rightarrow U\frakp\dmod \rightarrow  U\frakl\dmod \\ V & \mapsto (V)_{\frakn_P} = V / \frakn_P V.\end{align*}
\end{definition} 

We index conjugacy classes of parabolic subgroups of $G$ by subsets of $\Delta = \{1, \dots, r\}$. Specifically, for a subset $I$ of $\Delta$, we fix a representative $P_I$ for the conjugacy class corresponding to $I$ determined by the condition that $\Lie(P_I)$ is generated by $\fb$ and the weight subspaces $\g_{-\alpha_i}$ for $i \in I$. Let $P_I^-$ be the opposite parabolic to $P_I$, so that  $\Lie(P_I^-)$ is generated by $\fb^-$ and the weight subspaces $\g_{\alpha_i}$ for $i \in I$. The Levi subgroup corresponding to $I$ is the subgroup $L_I$  of $G$ whose Lie algebra is generated by $\fh$ and   $\g_{\pm \alpha_i}$ for $i \in I$. Let $N_I = R_{\rm u}(P_I)$  and $N_I^- = R_{\rm u} (P_I^-)$ be the unipotent radicals. The quotients of $P_I$ and of $P_I^-$ by each of their unipotent radicals is identified with $L_I$; thus, we have projection maps: $\text{\rm pr} : P_I \rightarrow L_I$ and $\text{\rm pr}^{-} : P_I^- \rightarrow L_I$.

\subsection{The Vinberg semigroup and wonderful compactification}

Next, we present a summary of the construction of the Vinberg semigroup and wonderful compactification. For more details, see  \cite{Evenswonderfulcompactification2008a} and \cite[Section 3]{Ganevwonderfulcompactificationquantum}. Let $\O(G)$ denote the coordinate algebra of $G$.  The Peter-Weyl theorem asserts that  the map of matrix coefficients 
$$\phi: \bigoplus_{\lambda \in \Lambda^+} V_\lambda^* \otimes V_\lambda \stackrel{\sim}{\longrightarrow} \O(G); \qquad f \otimes v \mapsto [g \mapsto f(g\cdot v)]$$ defines an isomorphism of $\Ug\ot\Ug$-modules. Moreover, the subspaces $$ \O(G)_{\leq \lambda}  = \phi \left( \sum_{\mu\leq \lambda} V_\mu^* \otimes V_\mu \right),$$ for $\lambda \in \Lambda$, endow $\O(G)$ with the structure of a $\Lambda$-filtered algebra as in Definition \ref{def:filteredalgebra}, invoking the dominance order on $\Lambda$ from Definition \ref{def:gorder}.  Let $\C[\Lambda]$ denote the group algebra of $\Lambda$ as an abelian group; it is generated by formal variables $z^\lambda$ with relations $z^\lambda z^\mu = z^{\lambda + \mu}$, for $\lambda, \mu \in \Lambda$. 

\begin{definition} The Vinberg semigroup $\Vinb_G$ for $G$ is defined as the spectrum of the Rees algebra for $\O(G)$ with the Peter-Weyl filtration: $$\Vinb_G = \Spec\left( \bigoplus_{\lambda \in \Lambda} \O(G)_{\leq \lambda} z^\lambda \right).$$ \end{definition}

The space $\Vinb_G$ is a semigroup with an action of $G \times G$. In addition, since $\Lambda = X^*(T)$ is the character lattice of the maximal torus $T$ of $G$, the $\Lambda$-grading on the coordinate ring of  $\Vinb_G$ endows $\Vinb_G$ with a $T$-action which commutes with the $G \times G$-action. Let $\C[z^{\alpha_i}] =  \C[z^{\alpha_i}\ | \ i = 1, \dots, r]$ denote the polynomial subalgebra of $\C[\Lambda]$ generated by the elements $z^{\alpha_i}$ for $i \in \Delta$. Let $\A = \Spec \left(\C[z^{\alpha_i}]\right)$, so $\A$ is an $r$-dimensional affine space, and the choice of positive simple roots endows $\A$ with a coordinate system. As in \ref{eqn:maptoRees}, we have an inclusion $\C[z^{\alpha_i}] \hookrightarrow \O(\Vinb_G)$. The induced surjective map \begin{equation} \Vinb_G \rightarrow \A \end{equation} is the abelianization map of \cite{Vinbergreductivealgebraicsemigroups1995}; it is flat and equivariant for the natural actions of $T$. The fiber of $\pi$ over a point away from the coordinate hyperplanes in $\A$ can be identified with $G$. The fiber of $G$ over 0 contains the horocycle space $\Y =\horocycle$ as a Zariski open subset. For any weight $\lambda$, thought of as a character of $T$, one can form the GIT quotient $\Vinb_G \text{ $\!$/$\! \!$/$\!$}_\lambda  T$ of $\Vinb_G$ by $T$ along $\lambda$. 

\begin{defprop} \rm Fix a regular dominant weight $\lambda$. The corresponding GIT quotient of $\VinbG$ is a smooth, projective variety that does not depend (up to isomorphism) on the choice of regular dominant weight. It contains the adjoint group $\Gad$ as a Zariski open subset, and is known as the wonderful compactification\footnote{The adjective `wonderful' is a technical term. A variety is called `wonderful' if it is smooth, connected, complete with a group action such that there is an open orbit; moreover, the complement of this orbit must be a union of irreducible divisors with normal crossings whose partial intersections give the remaining orbits. For more details, see \cite{LunaToutevarietemagnifique1996}.} of $\Gad$, denoted by:
	$$\Gbarad := \Vinb_G \text{ $\!$/$\! \!$/$\!$}_\lambda  T.$$ 
\end{defprop}

Implicit in the above Definition-Proposition is that the  semistable loci of $\VinbG$ with respect to various regular dominant weights coincide. Let  $\Vinb_G^{\text{\rm ss}}$ denote be this common semistable locus. This is a $G \times G$ variety, and let 
$$\pi : \Vinb_G^{\text{\rm ss}} \longrightarrow \A$$
be the restriction of the abelianization map to the semistable locus, which is also flat. The map $\pi$ is also smooth, and its fibers can be identified with certain partial horocycle spaces, which we consider in  Section \ref{sec:parabolic} below. We denote by $\D_\pi$ the sheaf of relative differential operators, and $D(\pi)^\gr$ its category of graded modules.  (As an example, we will consider the case of $G = \SL_2$ in detail in Section \ref{sec:sl2} below.)

\begin{rmk} See \cite[Example 3.2.4]{Briontotalcoordinatering2007} for the relation between the definition of the Vinberg semigroup presented in this section and Vinberg's original definition; the latter appears in \cite{Vinbergreductivealgebraicsemigroups1995}. See \cite[Section D.2.3]{DrinfeldGeometricconstantterm2016} for a Tannakian approach to defining the Vinberg semigroup through its category of representations.  For other definitions of the wonderful compactification, see De Concini and Springer \cite{ConciniCompactificationsymmetricvarieties1999} and Evens and Jones \cite{Evenswonderfulcompactification2008a}. The connection to our chosen definition is based on work of Martens and Thaddeus \cite[Theorem 5.3]{MartensCompactificationsreductivegroups2016}, ultimately stemming from Vinberg's seminal paper \cite{Vinbergreductivealgebraicsemigroups1995}.\end{rmk}

\subsection{Differential operators on $G$}\label{subsec:diffopsG}

Let $D_G = \Gamma(G, \D_G)$ be the algebra of global differential operators on $G$. The algebra $D_G$ is the subalgebra of $\End_\C(\O(G))$ generated by $\O(G)$ (acting by left multiplication) and the derivations $\Derv(\O(G))$. Let $\text{\rm Im}(\mu) \subseteq D_G$ denote the image of the quantum moment map $\mu :\Ug \ot \Ug \rightarrow D_G$ stemming from the action of $G \times G$ on $G$ by left and right multiplication.

\begin{definition}For $\lambda \in \Lambda$, we define the following two subspaces of $D_G$:
	\begin{enumerate}
		\item 	 Let $$(D_G)^\text{\rm PW}_{\leq \lambda} := \O(G)_{\leq \lambda} \cdot \text{\rm Im}(\mu)$$  be the subspace generated by the $\lambda$-th filtered piece $\O(G)_{\leq \lambda}$ of $\O(G)$ and the image of $\mu$.  
		
		\item Let $$(D_G)^\text{\rm deriv}_{\leq \lambda} = \{ p \in D_G \ | \ p( \O(G)_{\leq \lambda^\prime} ) \subseteq \O(G)_{\leq \lambda^\prime + \lambda} \ \text{\rm for all $\lambda^\prime$ in $\Lambda$} \}$$ be the space of endomorphisms of $\O(G)$  in $D_G$ sending $\O(G)_{\leq \lambda^\prime}$ into  $\O(G)_{\leq \lambda + \lambda^\prime}$.
	\end{enumerate}
\end{definition}

\begin{prop}\label{prop:PWfilt} The subspaces  $(D_G)^\text{\rm PW}_{\leq \lambda}$ define a $\Lambda$-filtration on $D_G$ (in the sense of Definition \ref{def:filteredalgebra}), which we refer to as the Peter-Weyl filtration on $D_G$. \end{prop}

\begin{proof} Since the subspaces $\O(G)_{\leq \lambda}$ define a $\Lambda$-filtration of $\O(G)$, it suffices to show that $\text{\rm Im}(\mu) \cdot \O(G)_{\leq \lambda}  =  \O(G)_{\leq \lambda} \cdot \text{\rm Im}(\mu) $. To this end, take $f \in \O(G)_{\leq \lambda}$ and $x \in \g \times \g$. The equivariance of the quantum moment map implies that 
		$$\mu(x)\cdot f  = f \cdot \mu(x) + x \triangleright f.$$ Observe that  $x \triangleright f \in \O(G)_{\leq \lambda}$ and hence belongs to  $\O(G)_{\leq \lambda} \cdot \text{\rm Im}(\mu)$. It follows that $\mu(x) \cdot f \in  \O(G)_{\leq \lambda} \cdot \text{\rm Im}(\mu)$, and so $\text{\rm Im}(\mu) \cdot \O(G)_{\leq \lambda}  \subseteq  \O(G)_{\leq \lambda} \cdot \text{\rm Im}(\mu)$. A similar argument shows the reverse inclusion. \end{proof}

\begin{prop}\label{prop:derivationsfiltration} For any $\lambda \in \Lambda$, we have:
$$(D_G)^\text{\rm PW}_{\leq \lambda} = (D_G)^\text{\rm deriv}_{\leq \lambda} $$
 \end{prop}

\begin{proof} 
	It is straightforward to verify that the image of any element of  $\g \times \g$ under the quantum moment map $\mu: \Ug \ot \Ug \rightarrow D_G$ is a derivation sending  $\O(G)_{\leq \lambda^\prime}$ to  $\O(G)_{\leq \lambda^\prime}$ for any $\lambda^\prime \in \Lambda$. (In the notation of Section \ref{subsec:derviRees}, the image of $\g \times \g$ is contained in $\Derv(\O(G))_{\leq 0}$.) Hence the the image of $\Ug \ot \Ug$ lands in  $(D_G)^\text{\rm deriv}_{\leq 0}$. It follows that $(D_G)^\text{\rm PW}_{\leq \lambda} \subseteq (D_G)^\text{\rm deriv}_{\leq \lambda} $ for any $\lambda \in \Lambda$. 
		
	For the opposite inclusion, let $q : \O(G) \ot \Ug \ot \Ug \twoheadrightarrow D_G$ be the quotient map. Since $q(\O(G)_{\leq \lambda} \ot \Ug \ot \Ug)  = (D_G)^\text{\rm PW}_{\leq \lambda}$, it suffices to show that $$q\inv( (D_G)_{\leq \lambda}^\text{\rm deriv}) \subseteq \O(G)_{\leq \lambda} \ot \Ug \ot \Ug.$$ To this end, let $f \ot x \ot y \in \O(G) \ot \Ug \ot \Ug$ be a nonzero element such that $q(f \ot x \ot y)$ belongs to $(D_G)_{\leq \lambda}^\text{\rm deriv}$ (the case of non-simple tensors follows similarly). It is enough to show that $f \in \O(G)_{\leq \lambda}$. 
	
	First, we may choose $\lambda^\prime \in \Lambda$ such that the action of $x \ot y$ on $V_{\lambda^\prime}^* \ot V_{\lambda^\prime}$ is non-zero. Now, since $\O(G)$ is a $\Lambda$-filtered algebra, we have that $\O(G) = \bigoplus_{\gamma \in Z(G)} \left( \cup_{\nu \in \gamma} \O(G)_{\leq \nu}\right)$. Hence, we can write $f = \sum_{\gamma \in Z(G)} f_\gamma$, where $f_{\gamma} \in \O(G)_{\leq \nu_\gamma}$ for some $\nu_\gamma \in \gamma \subseteq  \Lambda$. Moreover, if $f_\gamma \neq 0$, we select $\nu_\gamma$ to be minimal (so that $f_\gamma$ contains matrix coefficients for the irreducible representation $V_{\nu_\gamma}$). Now, the fact that  $q(f \ot x \ot y)$ belongs to $(D_G)_{\leq \lambda}^\text{\rm deriv}$  implies that $q(f \ot x \ot y) ( \O(G)_{\leq \lambda^\prime}) \subseteq  \O(G)_{\leq \lambda + \lambda^\prime}$. By the choice of $\lambda^\prime$, this implies that $f_\gamma \cdot  \O(G)_{\leq \lambda^\prime} \subseteq  \O(G)_{\leq \lambda + \lambda^\prime}$ for each nonzero $f_\gamma$. The minimality assumption on $\nu_\gamma$ implies that $\nu_\gamma \leq \lambda$. We conclude (c.f. Lemma \ref{lem:parialorder}), that $f = f_{\gamma_0}$ for the unique $\gamma_0 \in Z(G)$ such that $\lambda \in \gamma_0$, and so $$f = f_{\gamma_0} \in \O(G)_{\leq \nu_{\gamma_0}} \subseteq  \O(G)_{\leq \lambda}.$$This concludes the proof. \end{proof}

Henceforth, for $\lambda \in \Lambda$, we write $(D_G)_{\leq \lambda}$ for the subspace $(D_G)^\text{\rm PW}_{\leq \lambda} = (D_G)^\text{\rm deriv}_{\leq \lambda} $ of $D_G$. In Proposition \ref{prop:imageofmu} below we show that the zero-th filtered piece is isomorphic to $\Ug \ot_{Z(g)} (\Ug)^{\text{\rm op}}$, where $Z(\g)$ denotes the center of $\Ug$.  Note that, since $G$ is semisimple,  $(D_G)_{\leq \lambda}$ is zero when $\lambda$ antidominant. 

As in Section \ref{subsec:derviRees}, set:
$$\Derv(\O(G))_{\leq \lambda} = \{ \theta \in \Derv(\O(G)) \ | \ \theta( \O(G)_{\leq \lambda^\prime} ) \subseteq \O(G)_{\leq \lambda^\prime + \lambda} \ \text{\rm for all $\lambda^\prime$ in $\Lambda$} \}.$$ 
Recall that the quotient of the weight lattice $\Lambda = \Lambda_G$ of $G$ by the root lattice $\Lambda_R  = \Z\langle \alpha_1, \dots, \alpha_r \rangle$ is identified with the center $Z(G)$ of $G$. 

\begin{cor}\label{cor:derivfilt}  We have:	$$\Derv(\O(G)) = \bigoplus_{\gamma \in Z(G)}  \left( \bigcup_{\lambda \in \gamma} \Derv(\O(G))_{\leq \lambda } \right).$$
\end{cor}

\begin{proof} By definition, $ \Derv(\O(G))_{\leq \lambda } =  \Derv(\O(G)) \cap (D_G)^\text{\rm deriv}_{\leq \lambda}$. The result  follows from Propositions \ref{prop:PWfilt} and \ref{prop:derivationsfiltration}, together with  the definition of a $\Lambda$-filtration (Definition \ref{def:filteredvs}). 
\end{proof}

\subsection{Localization on the Vinberg semigroup}\label{subsec:localizationvinberg}

The main result of this paper is the following:

\begin{theorem}\label{thm:loc} There is a functor $\Asymp : D(G)\filt \rightarrow D_H(\Y)$ that fits into the following commutative diagram:
	\[\xymatrix{ &  & \U(\g \times \g)\dmod \ar[d]^{\Loc_{\pi}} \ar[lldd]_{\Loc_{\Y}} \ar[rrdd]^{\Loc_G} &  & \\ & & D_H(\pi)  \ar[lld]^{ \zeta }  &   & \\  D_H(\Y) & & & &  D(G)\filt  \ar[llll]_{\Asymp }  \ar[llu]^{\rho} }  \]
	\end{theorem}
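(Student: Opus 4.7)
The plan is to obtain this theorem as a direct specialization of the general machinery established in Section~\ref{sec:relativediff}, applied to the affine $G \times G$-variety $X = G$ equipped with the Peter--Weyl filtration of $\O(G)$ from Definition~\ref{def:PWfiltDG}. The crucial input is that $\Vinb_G$ is by definition $\Spec$ of the Rees algebra of $\O(G)$ with respect to this filtration, the abelianization map $\pi : \Vinb_G \to \A^r$ is exactly the Rees-to-base map of Section~\ref{sec:relativediff}, and the horocycle space $\Y$ sits as a dense open subvariety of the fiber over $0$. Thus the framework of Section~\ref{sec:loccommute} can be imported verbatim once we verify its hypotheses in this setting.

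More concretely, I would first check that the Peter--Weyl filtration on $\O(G)$ is preserved by the $G \times G$-action: since the matrix coefficient map $\phi$ is an isomorphism of $U\g \otimes U\g$-modules and the filtered pieces $\O(G)_{\leq\lambda} = \phi\bigl(\sum_{\mu \leq \lambda} V_\mu^* \otimes V_\mu\bigr)$ are sums of $G\times G$-subrepresentations, this is immediate. Next I would choose the smooth open $\hat X \subseteq \Vinb_G$ required for Lemma~\ref{lem:reldiff}. The natural choice is to excise the singular locus from $\pi^{-1}(\A^r)$ while keeping $G$ as the generic fiber and $\Y$ (which is smooth, being a torsor over the smooth partial flag variety $G/B \times B^-\backslash G$) inside the special fiber; I would argue this is possible because both $G$ and $\Y$ are $G \times G$-stable smooth opens, and they together fit into a $T$-stable smooth open of $\Vinb_G$.

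With $\hat X$ in hand, I would directly invoke the proposition at the end of Section~\ref{sec:loccommute} with the group being $G \times G$ and the torus $T = H$ giving the weak equivariance from the $\Lambda$-grading. This produces a commutative diagram of the precise shape claimed, with $D_H(\pi)$ being the relative $D$-modules on $\hat X$ over $\A^r$ with the $H$-weak equivariance, $\Loc_{\Vinb_G}$ the localization on $\hat X$, and the arrows $z_i = 1$ and $z_i = 0$ being respectively $g^*$ (restriction to the fiber $G$) and the composition of $f^*$ with further restriction to the open $\Y \subseteq X_0$. The functor $\Asymp$ is then defined as the composition $\gr_D$ from Lemma~\ref{lem:reldiff}, followed by restriction along the open inclusion $\Y \hookrightarrow X_0$. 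Commutativity of the outer triangle is exactly the content of the final identity in the proof of the proposition, namely $\Loc_{X_0}(M) \simeq \gr_D \Loc_X(M)$ as $\D_{X_0} \otimes_{U\g\otimes U\g} M$, combined with restriction to the open $\Y$.

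The main obstacle I anticipate is the geometric verification in the middle step: one must produce the $G \times G \times H$-stable smooth open $\hat X \subseteq \Vinb_G$ whose fiber over generic $\A^r$ is $G$ and whose fiber over $0$ is smooth and contains $\Y$ as an open. This reduces to classical facts about the wonderful compactification, namely that $\Y$ is the open $G \times G$-orbit in the normal cone to the closed stratum, and that the toric geometry of $\A^r$ (as the toric variety for $H^{\text{\rm ad}}$ associated to the dominant cone) matches the stratification of $\Vinb_G$. Once this is in place, everything else is a formal consequence of the Rees-theoretic framework of Section~\ref{sec:relativediff}, and the identification of $\Asymp$ with nearby cycles/Verdier specialization on holonomic modules (part of the statement, cf.\ Proposition~\ref{prop:vfiltration}) is a separate check matching the Peter--Weyl filtration to the Kashiwara--Malgrange $V$-filtration along the boundary divisors of $\Gbarad$.
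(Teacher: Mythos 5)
Your proposal is correct and follows essentially the same route as the paper: instantiate the Rees/associated-graded machinery of Section~\ref{sec:relativediff} on $X=G$ with the $G\times G$-equivariant Peter--Weyl filtration, recognize $\Spec(\Rees(\O(G)))$ as $\Vinb_G$ with $\pi$ the abelianization map, and invoke the proposition at the end of Section~\ref{sec:loccommute}. The one spot where you hand-wave --- choosing a $T$-stable smooth open $\hat X$ with the right fibers --- the paper resolves cleanly by taking $\hat X=\Vinb_G^{\text{\rm ss}}$, the semistable locus for the $H$-action, whose fiber over $0$ is exactly $\Y$; this makes $X_0=\Y$ on the nose and removes the extra ``restrict to the open $\Y\subseteq X_0$'' step you build into your definition of $\Asymp$.
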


\begin{proof} The result is an application of the general ideas developed in Section \ref{sec:relativediff}.  The lattice here is $\Phi = \Lambda$, the linearly independent set that determined the partial order is $\{\beta_i\} = \{\alpha_i\}$, and  the group algebra $\C[\Lambda]$ is the coordinate algebra of the maximal torus $H$ of $G$. The $\Lambda$-filtered algebra in question is $A = \O(G)$ and its spectrum $X = \Spec(A)$ is the semisimple group $G$. We set $\VXcirc = \Vinb_G^{\text{\rm ss}}  \subseteq \VinbG  = \Spec(\Rees(\O(G))) $, and have the smooth map $\pi : \VinbG^{\text{\rm ss}} \to \A$.  Additionally, Corollary \ref{cor:derivfilt} implies that the third assumption at the beginning of Section \ref{sec:loccommute} holds. Finally, the group acting is $K = G \times G$ and $\Vinb_G^{\text{\rm ss}}  \subseteq \VinbG$ is a $G \times G \times H$-subvariety. With all these considerations in place, the result is a restatement of Theorem \ref{prop:loccommutesassocgr}.   \end{proof}

The localization functor $\U(\g \times \g)\dmod \rightarrow D_H(\Y)$ is a doubled version of the Beilinson--Bernstein localization functor (in families). 

\begin{prop}\label{prop:imageofmu} The image of the quantum moment map in either $D_G$ or $D_{\Y}$ is isomorphic to $\Ug \ot_{Z(\g)} (\Ug)^{\text{\rm op}}$. We have that:
	$$(D_G)_{\leq 0} = \Ug \ot_{\fZ} (\Ug)^{\text{\rm op}} \cdot \O(G)_{\leq 0}$$
	$$(D_{\Y})_0 = (\Ug \ot (\Ug)^{\text{\rm op}}) \ot_{\fZ \ot \fZ} U\mathfrak t.$$ 
	In addition, the subalgebra of global differential operators on $\Gbarad$ that preserve the $G \times G$-orbits is precisely $\Ug  \ot_{\fZ} \Ug^{\text{\rm op}}$. \end{prop}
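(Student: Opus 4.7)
The proof naturally breaks into four assertions; I would tackle them in order. The plan is first to identify $\mathrm{Im}(\mu) \simeq U\fg \ot_\fZ U\fg^{\mathrm{op}}$, then deduce the filtered statement for $(D_G)_{\leq 0}$, then compute $(D_\Y)_0$ by adjoining the vertical torus action, and finally invoke the log-homogeneous picture for $\overline{G^{\mathrm{ad}}}$.

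For the first identification, I would observe that the left- and right-translation actions of $G$ on itself commute, so $\mu$ automatically factors through $U\fg \ot U\fg^{\mathrm{op}}$. The classical identification of bi-invariant differential operators on $G$ with the center $\fZ$ then implies that the images of $\fZ \subset U\fg$ via the two factors coincide inside $D_G$; this can be checked directly on the Peter--Weyl decomposition $\O(G) = \bigoplus_\lambda V_\lambda^* \ot V_\lambda$, where a central element acts as the scalar $\chi_\lambda(z)$ from either side (using that the antipode preserves central characters). Injectivity of the resulting map $U\fg \ot_\fZ U\fg^{\mathrm{op}} \to D_G$ follows by a PBW/associated-graded argument: the induced symbol map lands in $\O(T^*G) \simeq \O(G) \ot \Sym(\fg)$, and the induced map from $\Sym(\fg) \ot_{\Sym(\fg)^G} \Sym(\fg)$ is manifestly injective. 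The same argument applies verbatim to $\mu_\Y: U\fg \ot U\fg^{\mathrm{op}} \to D_\Y$, since $\Y$ also carries commuting $G \times G$-actions sharing the same central subalgebra.

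The equality $(D_G)_{\leq 0} = U\fg \ot_\fZ U\fg^{\mathrm{op}} \cdot \O(G)_{\leq 0}$ is then immediate from the above and the definition of the Peter--Weyl filtration (Definition--Proposition~\ref{def:PWfiltDG}). For the zeroth graded piece on the horocycle space, the projection $\Y \to G/B \times B^-\backslash G$ is a principal $H$-bundle, so the vertical tangent directions contribute a subalgebra $U\ft \hookrightarrow D_\Y$ living in degree zero, and likewise $\mathrm{Im}(\mu_\Y)$ lies in degree zero since $G \times G$ commutes with the $H$-grading. I would then argue that these two subalgebras generate $(D_\Y)_0$ subject only to the Harish--Chandra relations: on a weight-$\lambda$ section along the $H$-fibers, each central element $z \in \fZ$ (acting via either side) becomes the scalar $\chi_\lambda(z)$, matching the action of the corresponding element $\gamma(z) \in U\ft$ under the Harish--Chandra homomorphism $\gamma: \fZ \to U\ft$. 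This identifies $(D_\Y)_0$ with the pushout $(U\fg \ot U\fg^{\mathrm{op}}) \ot_{\fZ \ot \fZ} U\ft$.

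Finally, global differential operators on $\overline{G^{\mathrm{ad}}}$ that preserve the $G \times G$-orbit stratification are exactly the logarithmic differential operators with respect to the boundary divisor, since orbit closures coincide with the partial intersections of the boundary components. Invoking the log-homogeneous (in fact wonderful) structure on $\overline{G^{\mathrm{ad}}}$ and the result of~\cite{SagatovLogarithmicDifferentialOperators2017} cited in the remark following Theorem~\ref{thm:intro:loc}, the sheaf of log-differential operators is globally generated, and its global sections are exactly $U\fg \ot_\fZ U\fg^{\mathrm{op}}$. The main obstacle, I expect, will be part~(iii): carefully pinning down the Harish--Chandra identification at the $\ft$-level, and verifying that no additional relations arise among the chosen generators of $(D_\Y)_0$ beyond those listed.
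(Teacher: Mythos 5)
Your route diverges from the paper's in an essential way: the paper's proof is anchored on a single citation — Borho--Brylinski's computation \cite[Prop.~8]{BorhoDifferentialoperatorshomogeneous1989} that $D_{G/N}^{T}\simeq U\fg\ot_\fZ U\ft$ — from which the horocycle statement $(D_\Y)_0 = (U\fg\ot U\fg^{\mathrm{op}})\ot_{\fZ\ot\fZ}U\ft$ is deduced first, and the identification of $\mathrm{Im}(\mu)$ is then read off by specializing the $U\ft$-factor and using the injectivity of $(D_G)_{\leq 0}\to(D_\Y)_0$ on $\mathrm{Im}(\mu)$. You instead propose to build everything from scratch, with the image of $\mu$ on $G$ computed first and $(D_\Y)_0$ deduced afterwards. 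In effect your step on $(D_\Y)_0$ — ``argue that these two subalgebras generate $(D_\Y)_0$ subject only to the Harish--Chandra relations'' — is a plan to reprove Borho--Brylinski's theorem, which is a genuine result and not just bookkeeping. As sketched, this step is the main gap: nothing in your outline rules out additional relations, nor shows that $\mathrm{Im}(\mu_\Y)$ together with the vertical $U\ft$ exhausts the degree-zero part (and indeed, as the $\SL_2$ example in Section~\ref{sec:sl2} makes explicit, the moment map alone is \emph{not} surjective onto the degree-zero piece — one really does need the extra $U\ft$ generator, and the claim is precisely that this is the only thing missing). You would need to supply, or cite, the Borho--Brylinski computation.

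A second, smaller gap: your Peter--Weyl argument asserts that a central element acts ``as the scalar $\chi_\lambda(z)$ from either side (using that the antipode preserves central characters).'' This is not correct as stated. The left copy of $\fZ$ acts on the $V_\lambda^*$ factor of the Peter--Weyl summand, giving the central character $\chi_{-w_0\lambda}$, while the right copy gives $\chi_\lambda$; these differ whenever $-w_0\neq 1$ on the weight lattice (e.g.\ type $A_n$, $n\geq 2$). What is true is that the two embeddings $\mu_1|_\fZ,\,\mu_2|_\fZ:\fZ\hookrightarrow D_G$ have the same image $D_G^{G\times G}$ and so differ by an automorphism of $\fZ$ (the one induced by the antipode, or equivalently by $-w_0$ on Harish--Chandra parameters); the image of $\mu$ is then the correspondingly \emph{twisted} balanced tensor, which is abstractly isomorphic to $U\fg\ot_\fZ U\fg^{\mathrm{op}}$. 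Your conclusion is fine, but the reasoning as written would mislead a reader into thinking $\mu_1(z)=\mu_2(z)$ on the nose. Finally, the injectivity via the symbol map is a reasonable strategy but calls for a flatness/freeness input to know that $\gr\bigl(U\fg\ot_\fZ U\fg^{\mathrm{op}}\bigr)\simeq \Sym\fg\ot_{(\Sym\fg)^G}\Sym\fg$ and that this maps injectively into $\O(T^*G)$; ``manifestly injective'' undersells what needs checking. Your appeal to \cite{SagatovLogarithmicDifferentialOperators2017} for the last part is legitimate and is the same reference the paper points to in a remark; the paper's own proof offers instead the shortcut that for $G$ adjoint $\O(G)_{\leq 0}=\C$, so the operators extending to $\Gbarad$ are exactly those in $(D_G)_{\leq 0}$.
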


\begin{proof} A result of Borho--Brylinski asserts that the $T$-invariant differential operators on $G/N$ is isomorphic to $\Ug \ot_{\fZ} U\mathfrak t$ \cite[Proposition 8]{BorhoDifferentialoperatorshomogeneous1989}. From this,  one deduces that the $T$-invariant differential operators on the horocycle space are  $(\Ug \ot (\Ug)^{\text{\rm op}}) \ot_{\fZ \ot \fZ} U\mathfrak t$. Here we appeal to the Harish-Chandra isomorphism $\fZ \simeq (U \mathfrak t)^W$. The image of $\Ug \ot \Ug$ is isomorphic to $(\Ug \ot (\Ug)^{\text{\rm op}}) \ot_{\fZ \ot \fZ} \C$, where $\fZ \ot \fZ \rightarrow \C$ is the counit map. Observe that the map $(D_G)_{\leq 0} \rightarrow (D_{\Y})_0$ is injective on the image of $\Ug \ot \Ug$. The first result follows. 
	
	The last result is a standard Hamiltonian reduction argument. Alternatively, if $G$ is of adjoint type, then $\O(G)_{\leq 0} \simeq \C$, and we see that the only differential operators that extend to $\Gbarad$ are those in $(D_G)_{\leq 0} \simeq \Ug \ot_{\fZ} (\Ug)^{\text{\rm op}}$.  \end{proof}

\subsection{Case of $\SL_2$} \label{sec:sl2}

For $G = \SL_2$, the Vinberg semigroup is the semigroup of two by two matrices, which we abbreviate by $\Vinb$ for the remainder of this section, and the map $\pi$ is the determinant map $\det: \Vinb \rightarrow \A^1$. The wonderful compactification of $(\SL_2)^{\text{\rm ad}} = \PSL_2$ is $\P^3$, and the horocycle space is the space of two by two matrices of rank one. The semistable locus of $\Vinb$ consists of nonzero matrices. Write $\O(\Vinb) = \C[a,b,c,d]$ for the algebra of functions on $\Vinb$ and $D_{\Vinb} = \C\langle a, b, c, d, \partial_a, \partial_b, \partial_c, \partial_d \rangle$ for the Weyl algebra of differential operators, so that $[\partial_a, a] = 1$, etc. The scaling action of $H = \C^\times$ on $\Vinb$ induces a grading on $D_{\Vinb}$ with the degrees of $a$, $b$, $c$, and $d$ all equal to 1, and the degrees of $\partial_a$, $\partial_b$, $\partial_c$, and $\partial_d$ all equal to $-1$. Let $D_\pi$ be the subalgebra of differential operators relative to the determinant map; i.e.\ the algebra generated by all derivations annihilating the function $ad-bc$.  Write $E, F, H$ for the usual generators of $\Usl$. 

\begin{definition} Set $\Delta = 1+ H^2 + 2EF + 2FE$ to be the Casimir element of $\Usl$ and $\text{\rm Eu} = 1 + a \partial_1 +b \partial_b + c \partial_c + d \partial_d$ to be the Euler operator in $D_\Vinb$. \end{definition}

\begin{lemma} We have:
\begin{enumerate}
\item The algebra $D_\pi$ is generated as an $\O(\Vinb)$-module by the following derivations: $c \partial_a + d \partial_b, b \partial_a + d \partial_c,  a \partial_a - d \partial_d, b \partial_b - c \partial_c,  a \partial_b + c \partial_d,  a \partial_c + b \partial_d.$

\item The quantum moment map $\mu : \Usl \ot \Usl \rightarrow D_\Vinb$ is given by 
\begin{align*} E \ot 1 &\mapsto  - c \partial_a - d \partial_b  & \qquad 1 \ot E &\mapsto  a \partial_b + c \partial_d \\
F \ot 1 &\mapsto  -a \partial_c - b \partial_d & \qquad 1 \ot F &\mapsto b \partial_a + d \partial_c \\
H \ot 1 &\mapsto - a \partial_a - b \partial_b + c \partial_c + d \partial_d & \qquad 1 \ot H &\mapsto a \partial_a - b \partial_b + c \partial_c -d \partial_d
\end{align*} Moreover, the subalgebra $D_\pi$ of relative differential operators is the $\O(\Vinb)$-submodule of $D_\Vinb$ generated by the image of $\mu$.

\item  The following identities hold in $D_\Vinb$:
\begin{align*} (ad-bc)\mu(1\ot E) &= -a^2 \mu(E \ot1) +c^2\mu(F \ot1) +ac \mu(H \ot1)\\
 (ad-bc)\mu(1 \ot F) &= b^2 \mu(E \ot 1)  - d^2\mu (F\ot 1) - bd\mu(H \ot1)\\
 (ad-bc)\mu(1 \ot H) &= 2ab \mu(E\ot1) -2cd\mu(F \ot1) -(ad+bc)\mu(H \ot1)\\
\mu(\Delta \ot 1) &= \mu(1 \ot \Delta) = \text{\rm Eu}^2 - (ad-bc)(\partial_a \partial_d - \partial_b \partial_c). 
\end{align*}
\end{enumerate}
\end{lemma}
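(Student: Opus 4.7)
The plan is to address the three parts in sequence, intertwining (1) and (2).

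For part (1), my starting point is that $\D_\pi$ is generated as a sheaf of algebras by $\O(\Vinb)$ together with the $\O(\Vinb)$-module $\Derv_\pi(\O(\Vinb))$ of relative derivations, that is, $\C$-linear derivations annihilating the pulled-back function $ad-bc$. Writing an arbitrary derivation as $\theta = f_a\partial_a + f_b\partial_b + f_c\partial_c + f_d\partial_d$, the relative condition $\theta(ad-bc)=0$ becomes the single equation
$$ df_a - cf_b - bf_c + af_d = 0 $$
in $\O(\Vinb) = \C[a,b,c,d]$. Since $(d,-c,-b,a)$ is a signed permutation of the coordinate sequence, it is a regular sequence in $\C[a,b,c,d]$, so the Koszul complex on it is exact. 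The syzygy module is therefore generated by the $\binom{4}{2}=6$ Koszul relations $v_ie_j-v_je_i$, and a direct inspection matches them (up to sign) with the six derivations in the statement.

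For part (2), I would compute the infinitesimal action directly from the definition of Section \ref{subsec:locequiv}. Under $(x,y)\cdot g = xgy^{-1}$, the vector field at $g\in\Mat_2$ corresponding to $(u_1,u_2)\in \g\oplus\g$ is the linear map $-u_1 g + gu_2$ read as a tangent vector; substituting the standard $2\times 2$ matrices for $E,F,H$ in either slot and reading off entries recovers the six displayed formulas for $\mu$. To prove that $D_\pi$ equals the $\O(\Vinb)$-submodule of $D_{\Vinb}$ generated by the image of $\mu$, I would produce each of the six generators from (1) explicitly inside that image: four of them are $-\mu(E\ot 1)$, $-\mu(F\ot 1)$, $\mu(1\ot E)$, $\mu(1\ot F)$, while the two diagonal generators arise as $\tfrac{1}{2}\bigl(\mu(1\ot H)\pm\mu(H\ot 1)\bigr)$. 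The reverse containment is automatic, since the formulas immediately show every $\mu(X\ot 1)$ and $\mu(1\ot X)$ annihilates $ad-bc$.

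For part (3), the first three identities---those expressing $(ad-bc)\mu(1\ot X)$ in terms of the $\mu(Y\ot 1)$---are direct expansions using the formulas from (2); conceptually, they are the infinitesimal shadow of the $\Mat_2$ identity that conjugation by $g^{-1}$ composed with multiplication by $\det g = ad-bc$ produces the adjugate, whose entries are $d,-b,-c,a$. The main substantive step is the Casimir identity
$$ \mu(\Delta\ot 1) = \mu(1\ot\Delta) = \text{\rm Eu}^2 - (ad-bc)(\partial_a\partial_d - \partial_b\partial_c). $$
Equality of the two left-hand sides reflects the bi-invariance of the Casimir. For the equality with the right-hand side, I would write $\mathcal E,\mathcal F,\mathcal H$ for the operators of (2) and expand $1+\mathcal H^2+2\mathcal E\mathcal F+2\mathcal F\mathcal E$ in the Weyl algebra, reducing using $[\partial_i,x_j]=\delta_{ij}$. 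The anticipated obstacle is precisely this bookkeeping: one must verify that all purely second-order terms aggregate to $-(ad-bc)(\partial_a\partial_d-\partial_b\partial_c)$ while the first-order and scalar pieces assemble to $\text{\rm Eu}^2$. A useful end-check is to evaluate both sides on a low-degree matrix coefficient such as $a^n$ to fix any stray constant.
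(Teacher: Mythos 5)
The paper gives no proof of this lemma, remarking only that it is "a series of straightforward computations," so your proposal supplies the argument rather than mirroring one. Your plan---Koszul syzygies on the regular sequence $(d,-c,-b,a)$ for part~(1), direct differentiation of $(x,y)\cdot g = xgy^{-1}$ for part~(2), and normal-form expansion in the Weyl algebra for part~(3)---is exactly the right approach, and the concrete pieces you supply (recovering the two diagonal generators as $\tfrac{1}{2}\bigl(\mu(1\ot H)\pm\mu(H\ot 1)\bigr)$, and interpreting the first three identities of part~(3) as the infinitesimal adjugate relation) are correct.

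Two warnings about the Casimir step. First, your description of how the pieces organize is off: $\text{\rm Eu}^2$ has substantial purely second-order content ($a^2\partial_a^2$, $2ab\partial_a\partial_b$, etc.), so it is not the case that "first-order and scalar pieces assemble to $\text{\rm Eu}^2$"; the correct statement is that the difference $\mu(\Delta\ot 1)-\text{\rm Eu}^2$ collapses to a purely second-order operator supported only on $\partial_a\partial_d$ and $\partial_b\partial_c$. Second, when you actually carry out the bookkeeping you will hit a factor of $4$. Writing $\mathcal H=-a\partial_a-b\partial_b+c\partial_c+d\partial_d$, $\mathcal E=-c\partial_a-d\partial_b$, $\mathcal F=-a\partial_c-b\partial_d$, the terms $\mathcal H^2$, $2\mathcal E\mathcal F+2\mathcal F\mathcal E$, and $\text{\rm Eu}^2$ contribute $-2ad$, $+4bc$, and $+2ad$ respectively to the coefficient of $\partial_a\partial_d$ (and symmetrically for $\partial_b\partial_c$), giving $\mu(\Delta\ot 1)-\text{\rm Eu}^2=-4(ad-bc)(\partial_a\partial_d-\partial_b\partial_c)$, not $-(ad-bc)(\partial_a\partial_d-\partial_b\partial_c)$. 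A quick check on $f=ad$ confirms this: $\mu(\Delta\ot 1)(ad)=5ad+4bc$, matching $\text{\rm Eu}^2(ad)-4(ad-bc)\cdot 1 = 9ad-4(ad-bc)=5ad+4bc$, whereas the printed formula gives $8ad+bc$. Note that your proposed end-check on $a^n$ would not detect this, since $(\partial_a\partial_d-\partial_b\partial_c)a^n=0$; you need a test function such as $ad$ or $a^2d$ on which the determinant operator acts nontrivially. So either there is a slip in your own expansion to locate, or the lemma as printed is missing a factor of $4$ in its last identity (the discrepancy is harmless on the horocycle space where $ad-bc=0$, which is where the formula is subsequently used).
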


We omit the proof of this lemma, as it is a series of straightforward computations. We write $\O(\SL_2) =\C[a,b,c,d]/(ad-bc =1)$ for the algebra of functions on $\SL_2$. 

\begin{prop} The algebra of differential operators on $\SL_2$ is the quotient of the algebra $\O(\SL_2) \ot \Usl \ot \Usl$ by the relations 
\begin{align*} 1 \ot 1 \ot E &=   -a^2\ot  E \ot 1 +c^2\ot F \ot1 +ac\ot H \ot 1  \\ 1 \ot 1 \ot F &= b^2 \ot E \ot 1 - d^2\ot F \ot 1 + bd\ot H \ot 1 \\ 1 \ot 1 \ot H &= 2ab \ot E\ot 1 -2cd\ot F \ot 1 -(ad+ bc)\ot H \ot1 \end{align*} The quantum moment map is the composition of the inclusion $\Usl \ot \Usl \hookrightarrow \O(\SL_2) \ot \Usl \ot \Usl$ and the quotient map. The image of $\O(\SL_2)_{\leq n} \ot \Usl \ot \Usl$ under the quotient is the $n$-th filtered piece of $D_{\SL_2}$. \end{prop}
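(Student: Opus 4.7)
The plan is to interpret $\O(\SL_2) \ot \Usl \ot \Usl$ as the smash product algebra $\O(\SL_2) \# (\Usl \ot \Usl)$ associated to the infinitesimal action of $\g \oplus \g$ on $\O(\SL_2)$ coming from the left/right multiplication action of $G \times G$ on $\SL_2$. Then there is a natural map
\[ \Phi : \O(\SL_2) \ot \Usl \ot \Usl \longrightarrow D_{\SL_2}, \qquad f \ot x \ot y \mapsto f \cdot \mu(x \ot y), \]
and the goal is to show that $\Phi$ is surjective with kernel generated (as a two-sided ideal) by the three displayed relations.

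Surjectivity of $\Phi$ follows from the fact that $G \times G$ acts transitively on $\SL_2$, so the anchor map $(\g \oplus \g) \ot \O(\SL_2) \to \Theta_{\SL_2}$ is surjective by Lemma~\ref{lem:coinvariants}; hence $D_{\SL_2}$ is generated as an algebra by $\O(\SL_2)$ together with $\text{Im}(\mu)$. That the three claimed relations lie in $\ker \Phi$ is immediate from part (3) of the preceding lemma: the identities hold in $D_{\Vinb}$ after multiplication by $(ad-bc)$, and restriction along the inclusion $\SL_2 \hookrightarrow \Vinb$ (where $ad - bc = 1$) yields precisely the stated identities in $D_{\SL_2}$.

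The main step is showing that these three relations generate the entire kernel. Let $I$ denote the two-sided ideal they generate, set $Q := (\O(\SL_2) \ot \Usl \ot \Usl)/I$, and consider the subalgebra
\[ L := \O(\SL_2) \ot (\Usl \ot 1) \subset \O(\SL_2) \ot \Usl \ot \Usl. \]
The standard PBW-type identification $D_{\SL_2} \simeq \O(\SL_2) \ot \Usl$ via left-invariant vector fields says exactly that $\Phi|_L : L \to D_{\SL_2}$ is an isomorphism. Combined with $I \subseteq \ker \Phi$, this forces $L \cap I = 0$, so $L$ embeds into $Q$. For the reverse inclusion, the three relations express each of $1 \ot 1 \ot E$, $1 \ot 1 \ot F$, $1 \ot 1 \ot H$ as elements of $L$ modulo $I$. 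Since $\Usl \ot 1$ is a subalgebra of $\Usl \ot \Usl$ and the smash product structure preserves $L$, and since $\O(\SL_2)$ together with $\Usl \ot 1$ and $1 \ot \Usl$ generate the smash product as an algebra, an iterative reduction shows every element of $\O(\SL_2) \ot \Usl \ot \Usl$ is congruent modulo $I$ to an element of $L$. Thus $Q = L \simeq D_{\SL_2}$, as claimed.

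The filtration statement is then essentially tautological: by Definition-Proposition~\ref{def:PWfiltDG}, $(D_{\SL_2})_{\leq n} = \O(\SL_2)_{\leq n} \cdot \text{Im}(\mu)$, which is precisely the image of $\O(\SL_2)_{\leq n} \ot \Usl \ot \Usl$ under $\Phi$. The main technical obstacle is the inductive reduction in the third paragraph: one must verify that after using a relation to rewrite some $1 \ot 1 \ot X_i$ appearing inside a monomial, the resulting expression, manipulated via the smash product multiplication (which entails the adjoint action of $\g \oplus \g$ on $\O(\SL_2)$ and the commutation of left- and right-invariant vector fields), again lies in $L + I$. This is a manageable bookkeeping argument, but it is the one nontrivial point.
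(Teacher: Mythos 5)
Your argument is correct, and it is somewhat more explicit than the paper's on the one nontrivial point. The paper proceeds by identifying $D_{\SL_2}$ with the quotient of $D_\pi$ (the relative differential operators on $\Vinb=\Mat_2$) by the two-sided ideal generated by $ad-bc-1$, then invokes part (2) of the preceding lemma to see that $D_{\SL_2}$ is a quotient of the smash product, and obtains the three cross-relations by setting $ad-bc=1$ in part (3). The paper leaves the verification that these three relations generate the whole kernel implicit (``the remaining claims are straightforward''). Your proof instead works directly on $\SL_2$: you get surjectivity from transitivity of the $G\times G$-action (Lemma~\ref{lem:coinvariants}), import the relations from the Vinberg identities in the same way, and then supply the missing step via the subalgebra $L = \O(\SL_2)\ot \Usl\ot 1$: the restriction of $\Phi$ to $L$ is the classical $\O(\SL_2)$-module isomorphism $D_{\SL_2}\simeq\O(\SL_2)\ot\Usl$ given by invariant vector fields, which forces $L\cap I = 0$, while the three relations place the images of $1\ot 1\ot E$, $1\ot 1\ot F$, $1\ot 1\ot H$ inside $\bar L$, forcing $\bar L = Q$. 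This is a legitimate and slightly more self-contained route than the paper's (you bypass the $D_\pi$-restriction machinery), and it actually closes a gap the paper glosses over.

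Two remarks. First, you overstate the difficulty of the ``iterative reduction'' in your last paragraph: since $L$ is a subalgebra of the smash product, and $Q$ is generated as an algebra by the images of $\O(\SL_2)$, $1\ot\Usl\ot 1$, and the three elements $1\ot 1\ot E, F, H$ — all of which lie in $\bar L$ after applying the relations — one has $\bar L = Q$ with no further bookkeeping needed; there is no commutation subtlety to worry about because $L$ being closed under the smash-product multiplication is automatic. Second, a small slip: the paper notes (after the proposition) that $1\ot\mathfrak{sl}_2\ot 1$ maps to \emph{right}-invariant vector fields and $1\ot 1\ot\mathfrak{sl}_2$ to \emph{left}-invariant ones, so your $L$ corresponds to the right-invariant identification, not the left-invariant one as you wrote. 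The argument is unaffected.
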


\begin{proof} Since $\SL_2$ is the fiber of the determinant map over $1$, we have that its algebra of differential operators is the quotient of $D_\pi$ by the (two-sided) ideal generate by $ad-bc=1$. By the previous proposition, we see that $D_{\SL_2}$ is a quotient of $\O(\SL_2) \ot \Usl \ot \Usl$, and to compute the cross relations, we set $ad-bc$ equal to one in the first three identities of the previous lemma. The remaining claims are straightforward. 
\end{proof}

The subspace $1 \ot \mathfrak{sl}_2 \ot 1$ of $\O(\SL_2) \ot \Usl \ot \Usl$ maps isomorphically onto its image in $D_{\SL_2}$, and coincides with the space of right-invariant vector fields on $\SL_2$. Similarly, elements of $1 \ot 1 \ot \mathfrak{sl}_2$ correspond to left-invariant vector fields. The relations listed in the statement of the proposition can be obtained independently by writing a given left-invariant vector fields in terms of right-invariant ones. 

\newcommand{\Hor}{\mathcal Y}

Let $\Hor = \Hor_{\SL_2}$ denote the horocycle space for $\SL_2$, which can be identified with the subspace of rank one matrices in $\Vinb = \Mat_2$, or with the quotient of $(\C^2 \setminus 0) \times (\C^2 \setminus 0)$ by the diagonal scaling action of $\C^\times$. Its algebra of global functions is the associated graded algebra of $\O(\SL_2)$, namely $\O(\Y) = \C[a,b,c,d]/ (ad-bc)$. Let $D_{\Hor}$ be the algebra of differential operators on $\Hor$. By results established above, we have that: 

\begin{lemma} The algebra $D_\Y$ is the quotient of $\Usl \ot \Usl \ot \O(\Hor)$ by the single relation $$\Delta \ot 1 \ot 1 - 1 \ot \Delta \ot 1.$$ The algebra of $C^\times$-invariant differential operators on the horocycle space $\Hor$ is $$U(\mathfrak{sl}_2 \times \mathfrak{sl}_2) \ot_{\fZ} \C[\text{\rm Eu}].$$  \end{lemma}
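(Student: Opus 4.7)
The plan parallels the proof of the preceding proposition for $D_{\SL_2}$, but adapted to the fiber of $\pi$ over $0\in\A^1$ instead of over $1$. Since $\Y$ is the smooth locus of $\pi^{-1}(0)$, Lemma~\ref{lem:reldiff} gives $\D_\Y=f^*\D_\pi$, so that $D_\Y$ is the quotient of $D_\pi$ by the two-sided ideal generated by $ad-bc$. The preceding lemma identifies $D_\pi$ as the $\O(\Vinb)$-submodule of $D_\Vinb$ generated by the image of $\mu:\Usl\otimes\Usl\to D_\Vinb$, and restricting yields a surjection
$$\Usl\otimes\Usl\otimes\O(\Y)\twoheadrightarrow D_\Y,\qquad x\otimes y\otimes f\mapsto\mu(x\otimes y)\cdot f.$$

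To identify the relations I would read them off from the four identities recorded in the preceding lemma. The first three each carry an overall factor of $(ad-bc)$, so they become tautologies modulo the defining relation of $\O(\Y)$ and contribute nothing new---this is the source of the asymmetry with the $\SL_2$ case, where the analogous identities allow one to eliminate the right $\Usl$-factor using $ad-bc=1$. The fourth identity, however, specializes to $\mu(\Delta\otimes 1)=\mu(1\otimes\Delta)=\text{\rm Eu}^2$ upon setting $ad-bc=0$, which is exactly the asserted relation $\Delta\otimes 1\otimes 1-1\otimes\Delta\otimes 1=0$. The main obstacle is showing this is the \emph{only} relation: one verifies, by comparing order filtrations, that the induced surjection from $(\Usl\otimes\Usl/(\Delta_L-\Delta_R))\otimes\O(\Y)$ to $D_\Y$ is an isomorphism. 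On associated graded this reduces to matching $(\Sym\mathfrak{sl}_2\otimes\Sym\mathfrak{sl}_2/(\sigma(\Delta_L)-\sigma(\Delta_R)))\otimes\O(\Y)$ with $\O(T^*\Y)$, a standard consequence of the Hamiltonian picture of $T^*\Y$ inherited from the Vinberg degeneration.

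For the $\C^\times$-invariant statement, the $\C^\times$-action on $\Y$ is the restriction of the scaling $H$-action from $\Rees(\O(\SL_2))$, with infinitesimal generator $\text{\rm Eu}$. Both the image of $\mu$ (which is degree zero) and $\text{\rm Eu}$ itself lie in $D_\Y^{\C^\times}$. Using the description of $D_\Y$ obtained above, together with the fact that $\O(\Y)^{\C^\times}=\C$, the invariant subalgebra is generated by $\mu(\Usl\otimes\Usl)$ and $\text{\rm Eu}$, subject only to $\mu(\Delta_L)=\mu(\Delta_R)=\text{\rm Eu}^2$. This is exactly $U(\mathfrak{sl}_2\times\mathfrak{sl}_2)\otimes_\fZ\C[\text{\rm Eu}]$, with $\fZ=\C[\Delta]$ embedded diagonally in $U(\mathfrak{sl}_2\times\mathfrak{sl}_2)$ (so that $\Delta\mapsto\Delta_L=\Delta_R$) and mapping to $\C[\text{\rm Eu}]$ via $\Delta\mapsto\text{\rm Eu}^2$.
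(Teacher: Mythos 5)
The proof has two genuine gaps, both of which are fatal as written.

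First, the claim that the first three identities of the preceding lemma ``become tautologies modulo the defining relation of $\O(\Y)$ and contribute nothing new'' is incorrect. Setting $ad-bc=0$ in, say, the first identity does \emph{not} produce a tautology; it produces the relation
$$-a^2\,\mu(E\ot 1)+c^2\,\mu(F\ot 1)+ac\,\mu(H\ot 1)=0,$$
and similarly for the other two. Each of these is a nonzero element of the source $\Usl\ot\Usl\ot\O(\Hor)$ (an $\O(\Hor)$-linear combination of the left $\mathfrak{sl}_2$-generators) which lies in the kernel of the surjection onto $D_\Hor$. You correctly note that, unlike the $\SL_2$ case, these relations cannot be read as ``solving for'' the right factor; but they are relations nonetheless, and you cannot simply discard them. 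Showing that they lie in the two-sided ideal generated by $\Delta\ot 1 - 1\ot\Delta$ (in the crossed-product algebra) is precisely the hard content of the claim and must be argued.

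Second, the associated-graded identification you invoke is false by a dimension count, so it cannot serve as the closing step. The variety $T^*\Y$ has dimension $2\dim\Y=6$, so $\O(T^*\Y)$, and hence $\gr D_\Y$, has Krull dimension at most $6$. On the other hand, $\Spec$ of
$$\bigl(\Sym\mathfrak{sl}_2\ot\Sym\mathfrak{sl}_2/(\sigma(\Delta_L)-\sigma(\Delta_R))\bigr)\ot\O(\Y)$$
is the fiber product of the quadric $\{\|\xi\|^2=\|\eta\|^2\}\subset\g^*\times\g^*$ (dimension $5$) with $\overline\Y$ (dimension $3$), of total dimension $8$. These cannot match. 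The discrepancy reflects the first point: the associated graded of the kernel ideal contains much more than $\sigma(\Delta_L)-\sigma(\Delta_R)$ --- at a minimum the symbols of the first-order relations above, and also of the commutators $[\Delta_L-\Delta_R,f]$ for $f\in\O(\Hor)$, which are nonzero degree-one elements of the crossed product even though $\Delta_L-\Delta_R$ maps to zero in $D_\Hor$. A correct argument would need to track all of these and show that the residual ideal on the associated-graded level cuts $T^*\Y$ out of the dimension-$8$ scheme; the Hamiltonian picture you gesture at may well supply this, but as stated the claim is off by two dimensions and the proof does not go through.
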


Here $\fZ = \C[\Delta_1, \Delta_2]$ is the center of $U(\mathfrak{sl}_2 \times \mathfrak{sl}_2)$, and is a polynomial algebra on the two Casimir elements $\Delta_1$ and $\Delta_2$, while $\Delta_i \mapsto \text{\rm Eu}^2$ in $\C[\text{\rm Eu}]$. Note that the quantum moment map is not surjective onto the degree zero piece.  

\section{Localization and parabolic restriction}\label{sec:parabolic}

We now recall a description of the $ G \times G$-orbits on the wonderful compactification $\Gbarad$. For more details, see \cite{Evenswonderfulcompactification2008a, MartensCompactificationsreductivegroups2016, Vinbergreductivealgebraicsemigroups1995}. There is a bijection between $G \times G$-obits $\X_I \subseteq \Gbarad$ and subsets $I \subseteq \Delta$ of (the indexing set for) the set of positive simple roots, with the property that $\X_I$ is contained in the orbit closure $\overline{\X_J}$ if and only if $I \subseteq J$.  In the extreme cases, we have $\X_\Delta = \Gad$ and $\X_\emptyset = G/B \times B^- \backslash G$. 
 
Fix a subset $I \subseteq \Delta$, and let $P_I$ and $L_I$ be the corresponding parabolic and Levi subgroups (see Section \ref{sec:notation} above). We compose the projection maps: $\text{\rm pr}: P_I \rightarrow L_I$ and $\text{\rm pr} : P_I^- \rightarrow L_I$, to obtain a map valued in $L_I^{\text{ad}} = L_I/ Z(L_I)$. Since $\Lad$ is a reductive group of adjoint type, it has a wonderful compactification $\overline{\Lad}$.  There is a point in $\X_I$ whose stabilizer is the subgroup
	$$P_I \times_{L_I^{\text{ad}}} P_I^- = \{ (g,h) \in P_I \times P_I^- \ | \ \text{\rm pr}(g) \left( \text{\rm pr}^-(h)\right)\inv \in Z(L_I)\}.$$
In addition, there are $G \times G$-equivariant fibrations:
	\[ \xymatrix{ \Lad \ar[r]  & \X_I \ar[d] \\ & G/P_I \times P_I^- \backslash G  }  \qquad \xymatrix{ \overline{\Lad} \ar[r]  & \overline{\X_I} \ar[d] \\ & G/P_I\times P_I^- \backslash G  }. \]
	
Since $L_I$ normalizes the unipotent radical $N_I = R_{\rm u}(P_I)$, there is a right $L_I$-action on $G/N_I$ and a left $L_I$ action on $N_I^-\backslash G$. These combine to give an `internal'  $L_I \times L_I$ on $G/N_I \times N_I^-\backslash G$, which commutes with the `external' action of $G \times G$. We consider the balanced product, i.e., the quotient of $G/N_I \times N_I^- \backslash G$ by the diagonal  $(L_I)_\Delta$ of $ L_I \times L_I$:

\begin{definition} For $I \subseteq \Delta$, the corresponding partial horocycle space is  the  quotient of $G/N_I \times N_I^- \backslash  G$ by the action of $L_I$:  $$\Y_I = {G/N_I \times_{L_I} N_I^- \backslash G}$$
\end{definition}

Since $Z(L_I) \times Z(L_I)$ normalizes $(L_I)_\Delta$, we obtain a free action of $(Z(L_I) \times Z(L_I))/(Z(L_I) \times Z(L_I)\cap (L_I)_\Delta) \simeq Z(L_I)$ on  $\Y_I$. The quotient by this action is precisely $\X_I$. We obtain $G \times G$-equivariant fibrations:
	\[ \xymatrix{ Z(L_I) \ar[r]  & \Y_I \ar[d] \\ & \X_I }  \qquad \qquad \xymatrix{ {L}_I \ar[r]  & \Y_I \ar[d]^{q_I} \\ & G/P_I\times P_I^- \backslash G }. \]

The partial horocycle spaces appear in the Vinberg semigroup, as we now explain. Let $e_I$ be the point in $\A^r$ whose $i$th coordinate is 1 if $i \in I$ and zero otherwise. The group $G \times G$ acts transitively on each fibers of the map $\pi : \Vinb_G^{\text{\rm ss}} \rightarrow \A^r$, and  ${\pi}\inv(e_I)$ is identified with $\Y_I$. Thus we have an inclusion $i_I : \Y_I \hookrightarrow \Vinb_G^{\text{\rm ss}} $. While the action of $H$ on $\Vinb_G^{\text{\rm ss}} $ does not preserve $\Y_I$, the action of the subgroup $Z(L_I) \subseteq H$ does, and coincides with the action defined above. Hence we obtain a restriction  functor:
	$$i_I^* : D_H(\Vinb_G^\text{\rm ss}) \rightarrow D_{Z(L_I)} (\Y_I)$$
In fact, the orbit $\X_I \subseteq \Gbarad$ is the GIT quotient of the fiber of the abelianization map $\tilde{\pi} : \Vinb_G \to \A^r$ over $e_I$ by the action of $Z(L_I) \subseteq H$, and $\Y_I = \pi\inv(e_I) \subseteq \tilde{\pi}\inv(e_I)$ is the semistable locus for this action. The following result is a special case of the general constructions of Section \ref{sec:relativediff} (see also Section \ref{subsec:localizationvinberg}): 

\begin{prop}\label{prop:asympI} There is a `parabolic asymptotics' functor $ \Asymp_I : D(G)\filt \to D_{Z(L_I)}(\Y_I)$
that fits into the following commutative diagram:
\[\xymatrix{ &  & \U(\g \times \g)\dmod \ar[d]^{\Loc_{\Vinb_{G}}} \ar[lldd]_{\Loc_{\Y_I}} \ar[rrdd]^{\Loc_G} &  & \\ & & D_{H}(\pi)  \ar[lld]^{ i_I^* }  &   & \\  D_{Z(L_I)}(\Y_I) & & & &  D(G)\filt  \ar[llu]_{\rho} \ar[llll]^{\Asymp_I } }  \]
\end{prop}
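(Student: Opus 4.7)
The plan is to deduce this proposition as a direct specialization of the general framework of Section \ref{sec:relativediff}, mirroring the argument for Theorem \ref{thm:loc} but replacing the base point $0 \in \A^r$ with the point $e_I \in \A^r$.

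First, I would pin down the geometry at $e_I$. Recall from Section \ref{subsec:localizationvinberg} that $\Vinb_G = \Spec(\Rees(\O(G)))$ with respect to the Peter--Weyl filtration, that $\pi : \Vinb_G \to \A^r$ arises from the inclusion $\C[z^{\alpha_i}] \hookrightarrow \O(\Vinb_G)$, and that both sides carry compatible $H$-actions under which $\pi$ is equivariant. The point $e_I$ is characterized by $z^{\alpha_i}(e_I) = 1$ for $i \in I$ and $z^{\alpha_i}(e_I) = 0$ for $i \notin I$, so its $H$-stabilizer is the common kernel of the characters $\{\alpha_i\}_{i \in I}$, which is precisely $Z(L_I)$. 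As stated in the paragraph preceding the proposition, the fiber $\pi^{-1}(e_I)$ is a $G \times G$-stable closed subscheme whose unique open $G \times G$-orbit is the partial horocycle space $\Y_I$, giving a $Z(L_I)$-equivariant locally closed inclusion $i_I : \Y_I \hookrightarrow \Vinb_G$.

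Given these identifications, I would define $\Asymp_I$ as the composition of the Rees lift $D(G)\filt \to D_H(\pi)$ of Lemma \ref{lem:reldiff} with a restriction functor $i_I^* : D_H(\pi) \to D_{Z(L_I)}(\Y_I)$. This restriction is produced by the same argument as in Lemma \ref{lemma:rel-fiber}: applied at the point $e_I$ rather than at $0$, it identifies the pullback of $\D_\pi$ along $i_I$ with $\D_{\Y_I}$ on the open orbit, and the weak $H$-equivariance descends to a weak $Z(L_I)$-equivariance since $Z(L_I)$ is exactly the $H$-stabilizer of $e_I$. Commutativity of the two upper triangles in the diagram then follows tautologically: the left triangle is the Section \ref{sec:loccommute} principle that localization commutes with base change along $\A^r$, specialized to the point $e_I$, while the right triangle is the same principle applied at the open point $1 \in \A^r$, which is precisely Theorem \ref{thm:loc}. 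The remaining (bottom) triangle commutes by the very definition of $\Asymp_I$.

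The main technical step requiring careful verification is the identification of the open $G \times G$-orbit in $\pi^{-1}(e_I)$ with the balanced product $\Y_I = G/N_I \times_{L_I} N_I^-\backslash G$. This requires matching the Peter--Weyl expansion $\O(\Vinb_G) = \bigoplus_\lambda \O(G)_{\leq \lambda}\, z^\lambda$ against the definition of $\Y_I$, checking that imposing $z^{\alpha_i} = 0$ for $i \notin I$ (and $z^{\alpha_i} = 1$ for $i \in I$) selects exactly those matrix coefficients that descend to functions on $G/N_I \times_{L_I} N_I^-\backslash G$ -- that is, those whose weight data is supported on the face of the dominant cone cut out by $I$. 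This is standard for the Vinberg semigroup but is the computational heart of the argument; once in hand, the rest of the proposition follows formally from the framework of Section \ref{sec:relativediff}.
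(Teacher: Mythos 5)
Your proposal is correct and matches the paper's approach: the paper simply asserts that Proposition~\ref{prop:asympI} ``is a special case of the general constructions of Section~\ref{sec:relativediff},'' and your argument is precisely the careful unpacking of that assertion, specializing Lemma~\ref{lemma:rel-fiber} and the associated-graded principle at the fiber over $e_I$ rather than over $0$, with the $H$-stabilizer of $e_I$ correctly identified as $Z(L_I)$. The one technical point you flag at the end -- identifying the open $G\times G$-orbit in $\pi^{-1}(e_I)$ with $\Y_I$ -- is taken as known in the paper (it is stated in the paragraph preceding the proposition), so your proof does not go beyond what the authors intended.
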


We fix a point $y \in \Y_I$ and set $x =q_I(y)  \in G/P_I \times P_I^-\backslash G$ to be its image under  the fibration $q_I : \Y_I\rightarrow G/P_I \times P_I^- \backslash G$ (see Section \ref{sec:notation}). The choice of $y \in \Y_I$ identifies the fiber $q_I\inv(x)$ with $L_I$, and we have an inclusion:
$$i_y : L_I \hookrightarrow \Y_I$$
On the other hand, the points of the partial flag variety $G/P_I$ are in bijection with the conjugates of $P_I$ (i.e., every parabolic subgroup is its own normalizer), and it follows that $x \in G/P_I \times P_I^- \backslash G$ corresponds to a pair $(P, P^\prime)$ of parabolic subgroups, where $P$ is conjugate to $P_I$ and $P^\prime$ is conjugate to $P_I^-$. We set $\frakp = \Lie(P)$ and $\frakp^\prime = \Lie(P^\prime)$ and note that the quotients of $P$ and $P^\prime$ by each of their unipotent radicals are canonically identified with the Levi subgroup $L_I$. 

\begin{theorem}\label{thm:parabolic} The following diagram commutes:
	\[ \xymatrix{  U\g \ot U\g\dmod \ar[rrrr]^{\text{\rm res}_{\frakp} \ot \text{\rm res}_{\frakp^\prime} } \ar[d]^{ \Loc_{G} } & &   & &  U\frakl_I \ot U\frakl_I\dmod \ar[d]^{\Loc_{L}}    \\ D(G )\filt  \ar[rrrr]^{i_y^* \circ \Asymp_I}  & &  & & D_{Z(L_I)} (L_I) },\]
	where the vertical functors are  matrix coefficients localization.  \end{theorem}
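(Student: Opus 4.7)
The plan is to reduce via Proposition~\ref{prop:asympI} to a comparison on the fiber $L\simeq L_I$, and then identify both sides explicitly using Lemma~\ref{lem:coinvariants}. Since Proposition~\ref{prop:asympI} gives $\Asymp_I\circ\Loc_G=\Loc_{\Y_I}$, the theorem reduces to producing a natural isomorphism
$$i_y^*\Loc_{\Y_I}(M)\;\simeq\;\Loc_{L_I}\bigl(\text{\rm res}_{\frakp}\otimes\text{\rm res}_{\frakp^\prime}(M)\bigr)$$
of $D_{Z(L_I)}$-modules on $L_I$, for every $U\g\otimes U\g$-module $M$.

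The first step is to apply Lemma~\ref{lem:coinvariants} to both sides. Since $G\times G$ acts transitively on $\Y_I$, we have $\Loc_{\Y_I}(M)=(\O_{\Y_I}\otimes M)_{\underline{\fraks}}$, where $\underline{\fraks}$ is the kernel of the anchor map $(\g\oplus\g)\otimes\O_{\Y_I}\to\Theta_{\Y_I}$; pulling back along $i_y$ to the fiber $L=q_I^{-1}(x)$ gives $i_y^*\Loc_{\Y_I}(M)=(\O_L\otimes M)_{\underline{\fraks}|_L}$. Similarly, the matrix-coefficient localization on $L_I$ (with its $L_I\times L_I$-action by left and right multiplication) gives
$$\Loc_{L_I}\bigl(\text{\rm res}_{\frakp}\otimes\text{\rm res}_{\frakp^\prime}(M)\bigr)=\bigl(\O_{L_I}\otimes M_{\fraku_I\oplus\fraku_I^-}\bigr)_{\underline{\fraks}^{L_I}},$$
where $\underline{\fraks}^{L_I}$ denotes the stabilizer subalgebroid of the action of $\frakl_I\oplus\frakl_I$ on $L_I$.

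The second step combines these using the geometry of the stabilizers. The identification $L\simeq L_I$ induced by $y\mapsto 1$ is $L_I\times L_I$-equivariant, and the key geometric input is a short exact sequence of Lie algebroids on $L\simeq L_I$:
$$0\to\fraku_I\oplus\fraku_I^-\to\underline{\fraks}|_L\to\underline{\fraks}^{L_I}\to 0.$$
Fiberwise, at $y'\in L$ corresponding to $l\in L_I$, this says that the stabilizer $\fraks_{y'}\subset\g\oplus\g$ is an extension of the $L_I\times L_I$-stabilizer $\fraks_l$ by the constant ideal $\fraku_I\oplus\fraku_I^-$, which is immediate from the description of $L$ as the $P\times P^\prime$-orbit of the base point, together with the fact that the unipotent radical $N_I\times N_I^-$ of $P\times P^\prime$ acts trivially on $L$. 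Iterated coinvariants then yield
$$(\O_L\otimes M)_{\underline{\fraks}|_L}=\bigl(\O_L\otimes M_{\fraku_I\oplus\fraku_I^-}\bigr)_{\underline{\fraks}^{L_I}},$$
which is exactly the desired isomorphism.

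The main technical point I expect is to establish the short exact sequence of Lie algebroids above globally on $L$, rather than merely fiberwise, and to verify that the coinvariants of the various subalgebroids combine correctly on the level of sheaves (so that the iterated-coinvariants identity in Step 3 is literally true, not just up to canonical isomorphism of fibers). This should follow from the fact that $\fraku_I\oplus\fraku_I^-$ is a normal ideal in $\frakp\oplus\frakp^\prime$ acting trivially on $L$, so that the quotient Lie algebroid inherits its $L_I\times L_I$-equivariant structure directly from the base-point stabilizer.
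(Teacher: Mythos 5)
Your proposal is correct and follows essentially the same route as the paper: reduce via Proposition~\ref{prop:asympI} to comparing localization on $\Y_I$ with localization on $L_I$, invoke Lemma~\ref{lem:coinvariants} to re-express both sides as coinvariants for the stabilizer algebroids, and then compare the algebroids themselves fiberwise via the explicit description of the stabilizer of a point in $\Y_I$ as an extension of an $L_I\times L_I$-stabilizer by the (conjugate of) $\frakn_I\oplus\frakn_I^-$. The paper proves the same algebroid identification by showing that the two subsheaves $(\frakn_I\times\frakn_I^-)\otimes\O_{L_I}$ and $\underline{\fraks}_L$ each include into $i_{\tilde y}^*\underline{\fraks}_{\tilde\Y}$ and together generate it — which is equivalent to your short exact sequence after a dimension count. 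The only bookkeeping you gloss over, which the paper handles carefully, is the introduction of the twisted model $\tilde\Y_I$ (inverting the second factor) and the explicit choice of a lift $(g,h)\in G\times G$ of $y$, which makes precise the identification $L\simeq L_I$ and the $(g,h)$-conjugation relating the abstract $\frakn_I\oplus\frakn_I^-$ to the actual unipotent part of the stabilizer at each point of the fiber.
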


In other words, the functor of matrix coefficients localization transforms parabolic restriction into parabolic asymptotics. 

\begin{proof} By Proposition \ref{prop:asympI}, the composition of matrix coefficient localization and $\Asymp_I$ is the same as localization onto $\Y_I$. Let $\tilde \Y_I := \frac{G/N_I \times G/ N_I^- }{L_I} $ be the quotient of $G/N_I \times G/N_I^-$ by the right diagonal action of $L_I$. Applying the inverse on $G$ in the second factor, we obtain an isomorphism
$\phi: \Y_I\stackrel{\sim}{\longrightarrow}  \tilde \Y_I.$ We fix a point $(g,h) \in G \times G$ whose image under the quotient map  $G \times G \twoheadrightarrow \tilde \Y_I$ is equal to $\tilde y := \phi(y)$, and set $i_{\tilde y} := \phi\circ i_y$. Then we have that  $P = gP_I g\inv$ and $P^\prime = hP_I^- h\inv$, the inclusion $i_{\tilde y}$ is given by $\ell_0 \mapsto [g\ell_0,h]$. Thus, it suffices to show that the following diagram commutes:
\[ \xymatrix{  U\g \ot U\g\dmod \ar[rrrr]^{\text{\rm res}_{\frakp} \ot \text{\rm res}_{\frakp^\prime} } \ar[d]^{ \Loc_{\tilde \Y_I } } & &   & &  U\frakl_I \ot U\frakl_I\dmod \ar[d]^{\Loc_{L}}    \\ D_{Z(L_I)} (\Y_I )  \ar[rrrr]^{i_{\tilde y}^* }  & &  & & D_{Z(L_I)} (L_I) }.\]

Let $\underline{\fraks}_L$ be the kernel of the anchor map $(\frakl_I \times \frakl_I) \ot \O_{L_I} \rightarrow D_{L_I} $ for the multiplication action of $L_I \times L_I$ on $L_I$, and  $\underline{\fraks}_{\tilde \Y}$  the kernel of the anchor map $(\g \times \g) \ot \O_{\Y_I} \rightarrow D_{\tilde \Y_I} $. Let $V \ot V^\prime$ be a  $U\g \ot U\g$-module.  By Lemma \ref{lem:coinvariants},  going right and then down in the diagram, we obtain the $\underline{\fraks}_{L}$-coinvariants   of the $(\frakl_I \times \frakl_I) \ot \O_{L_I}$-module sheaf $(\text{\rm res}_\frakp (V) \ot \text{ \rm res}_{\frakp^\prime}(V^\prime)) \ot \O_{L_I}$. On the other hand, going down then right, we obtain the $i_{\tilde y}^* \left( \underline{\fraks}_{\tilde \Y}\right)$-coinvariants   of the $i_{\tilde y}^* \left( (\g \times \g) \ot \O_{\tilde \Y_I} \right)$-module sheaf $( V \ot V^\prime) \ot \O_{\tilde \Y_I}$. Thus, by the definition of parabolic restriction,  it suffices to show that 
$$ i_{\tilde y}^* \left(\underline{\fraks}_{\tilde \Y} \right) = (\frakn_I \times \frakn_I^-) \ot \underline{\fraks}_L$$
as coherent sheaves on $L_I$. To see this, observe that the stabilizer in $G \times G$ of $i_{\tilde y}(\ell_0)$ is given by:
\begin{align*}
\{ (g \ell_0 \ell n \ell_0 \inv g\inv, h \ell m h\inv  ) \in G \times G \ | \ \ell \in L_I, n \in N_I, m \in N_I^-  \} \\
 = \{ (g \ell_0 \ell \ell_0\inv g\inv, h \ell h \inv  )  \ : \ \ell \in L_I \} \left(g \ell_0 N_I \ell_0\inv g\inv \times h (N_I^-) h\inv\right),
\end{align*}
i.e.\ the $(g, h)$-conjugate of $\{ (\ell_0 \ell \ell_0 \inv, \ell) \ : \ \ell \in L_I \} \left( \ell_0 N_I \ell_0\inv \times N_I^-\right)$. Since  the inclusion $i_{\tilde y} : L_I \to \tilde \Y_I$ is given by $\ell_0 \mapsto [g\ell_0,h]$, it follows that the free coherent sheaf $(\frakn_I \times \frakn_I^-) \ot \O_{L_I}$ includes into $i_{\tilde y}^* \left(\underline{\fraks}_{\tilde \Y}\right)$. Meanwhile, the stabilizer in $L \times L$ at $\ell_0 \in L$ is $\{ (\ell_0 \ell \ell_0\inv, \ell ) \ : \ \ell \in L  \}$. Thus, $\underline{\fraks}_{L}$ also includes into $\underline{\fraks}_{\tilde \Y}$, and together these generate $i_{\tilde y}^* \left(\underline{\fraks}_{\tilde \Y}\right)$.  \end{proof} 

\section{Relation to Verdier specialization}\label{sec:Verdier}

\subsection{General set-up}

\newcommand{\Xbar}{\overline{X}} 

Let ${\Xbar}$ be a smooth variety equipped with $r$ smooth divisors $Z_1, \dots, Z_r$ with normal crossings. We assume that the intersection $W = \bigcap_i Z_i$ is smooth. Let  $Z = \bigcup_i Z_i$ be the union of the divisors $Z_i$, and let $ X = {\Xbar} \setminus  Z$ be the complement of $Z$. Let $\Lambda = \Z^r$ and $T = (\C^\times)^r$ so that $\Lambda$ is identified with the character lattice of $T$. Let $N_W({\Xbar})$ be the normal bundle of $W$ in ${\Xbar}$, and denote by $N_W(Z)$ the union of the normal bundles $\bigcup_i N_W(Z_i)$ of $W$ in $Z_i$. We have that the complement $X_0 = N_W({\Xbar}) \setminus N_W(Z)$ is a smooth subvariety of $N_W(\Xbar).$  Consider the following subsheaves of $\O_{\Xbar}$, for $\mathbf k = (k_1, \dots, k_r) \in \Z^r$:
 $$(\O_{\Xbar})_{\leq \mathbf{k}} := \prod_i \cI_{Z_i}^{-k_i} = \cI_{Z_1}^{-k_1}\cI_{Z_2}^{-k_2} \cdots \cI_{Z_r}^{-k_r},$$ where $\cI_{Z_i}$ is the ideal sheaf of the divisor $Z_i$, and $\cI_{Z_i}^{-k_i} = \O_{\Xbar}$ if $k_i > 0$. These define a $\Z^r$-filtration on $\O_{\Xbar}$.

\begin{definition} Define the $V$-filtration on $\D_{\Xbar}$  as
$$\Gamma(V, (\D_{\Xbar})_{\leq \mathbf n})= \{ P \in \Gamma(V, D_{\Xbar}) \ | \ P(\Gamma(V, (\O_{\Xbar})_{\leq \mathbf k})) \subseteq \Gamma(V, (\O_{\Xbar})_{\leq \mathbf{k}-\mathbf{n}}) \ \text{\rm{for all $\mathbf k \in \Z^r$}} \},$$
for $\mathbf n \in \Z^r$. Define a filtration on $j_* \D_{X}$ whose $\mathbf n$-th piece is defined as the set of $P  \in \Gamma(V, j_* \D_X)$  such that upon restriction to $V^\prime \cap X$ for any open subset $V^\prime$ of $V$, the operator $P$ takes sections of $(\O_{\Xbar})_{\leq \mathbf k}$ to sections of $(\O_{\Xbar})_{\leq \mathbf{k}- \mathbf{n}}.$\end{definition}

\begin{lemma} The natural map of restriction $\phi: \D_{\Xbar} \rightarrow j_* \D_X$ respects the filtrations. Consequently, we have a pullback functor: $j_* \D_X\dmod\filt \rightarrow \D_{\Xbar}\dmod\filt$. \end{lemma}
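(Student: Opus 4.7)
\smallskip

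The plan. The map $\phi \colon \D_{\Xbar} \to j_*\D_X$ is the natural one: a section of $\D_{\Xbar}$ on an open $V \subseteq \Xbar$ restricts along $V \cap X \hookrightarrow V$ to a section of $\D_X$ on $V \cap X$, which is by definition a section of $j_*\D_X$ on $V$. To verify that $\phi$ preserves filtrations, I would take $P \in \Gamma(V, (\D_{\Xbar})_{\leq \mathbf n})$ and check the defining condition of $\Gamma(V, (j_*\D_X)_{\leq \mathbf n})$: for every open $V' \subseteq V$ and every $f \in \Gamma(V', (\O_{\Xbar})_{\leq \mathbf k})$, one must show that $\phi(P)|_{V'\cap X}\bigl(f|_{V'\cap X}\bigr)$ coincides with the restriction to $V'\cap X$ of a section of $(\O_{\Xbar})_{\leq \mathbf k -\mathbf n}$ over $V'$. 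The key point is that $P$ restricts to a section of $(\D_{\Xbar})_{\leq \mathbf n}$ on $V'$ — this is where one uses that the filtration is in fact a filtration by subsheaves, since the defining condition $P\cdot (\O_{\Xbar})_{\leq \mathbf k} \subseteq (\O_{\Xbar})_{\leq \mathbf k-\mathbf n}$ is local on $V$ (each $(\O_{\Xbar})_{\leq \mathbf k}$ is a subsheaf of $\O_{\Xbar}$, and the action of $P$ on $f$ depends only on their behavior in any neighbourhood of a point). Consequently $P|_{V'}(f) \in \Gamma(V', (\O_{\Xbar})_{\leq \mathbf k - \mathbf n})$, and its restriction to $V'\cap X$ agrees with $\phi(P)|_{V'\cap X}(f|_{V'\cap X})$ by functoriality of the restriction of differential operators to the open subvariety $X$.

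For the second assertion I would invoke the general formalism of restriction of scalars along a filtered algebra homomorphism. A filtered $j_*\D_X$-module is a pair $(M,\{M_{\leq \mathbf n}\})$ with $(j_*\D_X)_{\leq \mathbf n}\cdot M_{\leq \mathbf k} \subseteq M_{\leq \mathbf k+\mathbf n}$; viewing $M$ as a $\D_{\Xbar}$-module through $\phi$ and keeping the same filtration $\{M_{\leq \mathbf n}\}$, the required compatibility
\[(\D_{\Xbar})_{\leq \mathbf n}\cdot M_{\leq \mathbf k} \subseteq M_{\leq \mathbf k+\mathbf n}\]
follows immediately because $\phi\bigl((\D_{\Xbar})_{\leq \mathbf n}\bigr) \subseteq (j_*\D_X)_{\leq \mathbf n}$ by the first part. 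Morphisms are handled in the same way. The main obstacle, such as it is, is a book-keeping one: one must be careful that the filtrations on $\D_{\Xbar}$ and on $j_*\D_X$ really are subsheaves, so that one can freely restrict from $V$ to $V'$; but this is automatic from the sheaf-theoretic nature of the $(\O_{\Xbar})_{\leq \mathbf k}$ and the local character of the definition. No new analytic input is needed beyond the fact that $\phi$ is $\O$-linear and commutes with restriction to arbitrary opens.
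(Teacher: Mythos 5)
Your argument is correct and follows essentially the same route as the paper: both verify the filtration-compatibility by restricting $P$ to each open $V'\subseteq V$ and observing that applying $P$ commutes with restriction to $V'\cap X$ (the paper expresses this as a commutative square), and the second assertion is the routine restriction of scalars along a filtered algebra map.
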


\begin{proof} Let $P \in \Gamma(V, (\D_X)_{\leq \mathbf{n}})$. Then for all $V^\prime \subseteq V$, the following diagram commutes:
\[ \xymatrix{ \Gamma(V^\prime, (\O_{\Xbar})_{\leq \mathbf{k}}) \ar[r]^{P}  \ar[d] & \Gamma(V^\prime, (\O_{\Xbar})_{\leq \mathbf{k} -\mathbf{n}})\ar[d] \\ \Gamma(V^\prime \cap X, \O_{\Xbar}) \ar[r]^{\phi(P)} & \Gamma(V^\prime \cap X, \O_{\Xbar} ) }\] \end{proof}

Standard results on $V$-filtrations (e.g., \cite{SabbahMHMProject}) imply the following: 

\begin{lemma} Let  $\nu : N_Z(\Xbar) \rightarrow Z$ be the normal bundle. 
\begin{itemize}
\item The associated graded of $\D_{\Xbar}$ is supported on $Z$ and is identified with $\nu_* \D_{N_Z(\Xbar)}$.
\item The associated graded of $j_* \D_X$ is supported on $Z$ and is identified with $\nu_* \D_{X_0}$.
\item There are functors of Verdier specialization:
$$ D_{\text{\rm rh}} ( \Xbar ) \stackrel{\text{\rm Sp}}{\longrightarrow} D(N_W(\Xbar)) \qquad \qquad  D_{\text{\rm rh}} ( X ) \stackrel{\text{\rm Sp}^\circ}{\longrightarrow} D(X_0).$$
\end{itemize}
 \end{lemma}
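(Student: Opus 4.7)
The plan is to reduce all three assertions to local computations in coordinates adapted to the normal crossings divisor, and then to invoke the multi-variable extension of Kashiwara--Malgrange theory (as developed, e.g., by Sabbah) for the last clause.

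First, I would work in an \'etale-local chart on $\Xbar$ with coordinates $(x_1,\dots,x_r,y_1,\dots,y_s)$ such that $Z_i = \{x_i=0\}$ for $i=1,\dots,r$, so that $W=\{x_1=\cdots=x_r=0\}$. In such coordinates the ideal sheaves $\cI_{Z_i}$ are generated by $x_i$, hence the multi-filtration on $\O_{\Xbar}$ is explicit: $(\O_{\Xbar})_{\leq \mathbf k}$ is spanned over $\O_W$-functions in $y$ by monomials $x^{\alpha}$ with $\alpha_i \geq -k_i$ (reading $\alpha_i\geq 0$ when $k_i\geq 0$). A direct computation then shows that a monomial $x^\alpha \partial_x^\beta \partial_y^\gamma$ lies in $(\D_{\Xbar})_{\leq \mathbf n}$ if and only if $\beta_i - \alpha_i \leq n_i$ for every $i=1,\dots,r$. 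In particular the only generators moving strictly between filtered pieces are $x_i$ (degree $-e_i$) and $x_i \partial_{x_i}$ (degree $0$), together with the Euler-like operators. For $j_\ast\D_X$ the same analysis applies except that negative powers of $x_i$ are permitted, which precisely adjoins the invertibility of each $x_i$ in the graded ring.

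Next I would take the associated graded. In the chart above, $\gr (\D_{\Xbar})$ is generated by $x_i$, $\xi_i := \mathrm{gr}(\partial_{x_i})$ (with $[\xi_i,x_j]=\delta_{ij}$), the $y_j$ and $\partial_{y_j}$. This is exactly the algebra of differential operators on the total space of the normal bundle $N_W(\Xbar)$, organized along the projection $\nu$ to $Z$ (or more precisely to $W$), proving the first bullet. For $j_\ast\D_X$, adjoining $x_i^{-1}$ to each $\gr$-piece gives differential operators on the open subvariety $X_0 = N_W(\Xbar)\setminus N_W(Z)$, yielding the second bullet. Globalization is straightforward because the above construction is canonical (it depends only on the normal crossings data, not on the chosen coordinates) and patches over overlapping charts.

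Finally, for the Verdier specialization functors, I would invoke the theorem (Kashiwara--Malgrange in the case $r=1$, Sabbah in general) that every regular holonomic $\D_{\Xbar}$-module $M$ admits a canonical good $V$-multi-filtration $V_\bullet M$ compatible with the $V$-filtration on $\D_{\Xbar}$ defined above, characterized by the requirement that each $x_i\partial_{x_i} + s_i$ act nilpotently on the associated graded for suitable $s_i\in\C$. Applying the first two bullets, $\gr^V M$ is then naturally a module over $\gr \D_{\Xbar}$, i.e.\ a quasi-coherent sheaf on $N_W(\Xbar)$ with a $\D_{N_W(\Xbar)}$-action; this defines $\mathrm{Sp}$, and restricting to the open locus $X_0$ (equivalently, starting from a holonomic module on $X$ and using the filtration on $j_\ast\D_X$) defines $\mathrm{Sp}^\circ$. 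The main obstacle is the well-definedness of the canonical $V$-multi-filtration: existence and uniqueness in the multi-divisor setting is nontrivial and is the essential content of Sabbah's work, which I would cite rather than reprove. Functoriality then follows from the functoriality of the $V$-filtration on morphisms of holonomic modules.
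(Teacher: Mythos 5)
The paper gives no proof of this lemma, simply asserting that it follows from standard results on $V$-filtrations and citing Sabbah's MHM Project. Your proposal correctly unpacks those standard facts: the local-coordinate identification of $\gr\,\D_{\overline{X}}$ and $\gr\,j_*\D_X$ with differential operators on the normal bundle (respectively its open part $X_0$), and the appeal to Sabbah's multi-variable Kashiwara--Malgrange theory for the existence and uniqueness of the canonical $V$-filtration on regular holonomic modules, are exactly the content of that citation.
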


\subsection{Case of the wonderful compactification}

We apply the above set-up to the case of the wonderful compactification.  We assume for simplicity that $G$ is adjoint, and let $\Gbar$ denote the wonderful compactification. Let $j : G \hookrightarrow \Gbar$ be the inclusion. We have boundary divisors $Z_i$ for $i = 1, \dots, r$ $=$ $\text{\rm rank} (G)$.  In the notation from above, $\Xbar = \Gbar$, $X = G$, $W = G/B \times B^- \backslash G$, and $X_0 = \mathcal Y = \horocycle$. 

\begin{prop}\label{prop:vfiltration} On the level of global sections, the $V$-filtration on $j_* D_{G}$ coincides with the matrix coefficients filtration on $D_{G}$. \end{prop}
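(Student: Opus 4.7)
The plan is to match both filtrations to a common shift condition on operators, using the identification of matrix coefficients with sections of line bundles on $\Gbar$ associated to the boundary divisors.

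The central geometric input is the Peter--Weyl realization of matrix coefficients. Since $\Gbar$ is the GIT quotient of $\Vinb_G$ by the torus $T$ along a regular dominant weight, and since the boundary divisor $Z_i \subset \Gbar$ is pulled back from the coordinate hyperplane $\{z^{\alpha_i}=0\} \subset \A^r$, sections of the line bundle $\O(\sum n_i Z_i)$ on $\Gbar$ are identified with the $T$-weight-$\sum n_i \alpha_i$ component of the function ring of the semistable locus of $\Vinb_G$. The Rees description $\O(\Vinb_G) = \bigoplus_\mu \O(G)_{\leq \mu} z^\mu$ then gives a canonical identification
$$ \Gamma(\Gbar, \O(\textstyle\sum n_i Z_i)) \;\simeq\; \O(G)_{\leq \sum n_i \alpha_i}. $$
Restricting to $\Gad$, this identifies $\O(G)_{\leq \lambda}$, for $\lambda = \sum n_i \alpha_i$, with the space of functions on $\Gad$ extending to $\Gbar$ as rational sections with pole of order at most $n_i$ along each $Z_i$; in particular, the Peter--Weyl filtration matches the sheaf-theoretic filtration by pole order along the boundary divisors.

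With this dictionary in place, the proposition reduces to the preceding Proposition, which identifies the matrix coefficients filtration with the derivations filtration on $D_G$. The derivations filtration characterizes $P \in (D_G)_{\leq \lambda}$ by the shift condition $P(\O(G)_{\leq \mu}) \subseteq \O(G)_{\leq \mu + \lambda}$ for all $\mu$, while the $V$-filtration characterizes $P \in (j_* D_{\Gad})_{\leq \mathbf{n}}$ on global sections by the analogous shift on sections of $(\O_{\Xbar})_{\leq \mathbf{k}}$. Under the Step~1 dictionary, these two conditions correspond to one another, with $\lambda$ and $\mathbf{n}$ matched via the basis of simple roots.

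The main obstacle is the passage from the sheaf-theoretic $V$-filtration (tested on all opens $V' \subset \Gbar$) to the global shift condition on Peter--Weyl pieces. The inclusion of the matrix coefficients filtration into the $V$-filtration is essentially immediate: if $P$ shifts the Peter--Weyl filtration as required, then applied to any local regular section of $(\O_{\Xbar})_{\leq \mathbf{k}}$ it produces a function of the prescribed vanishing order along the boundary, by the global identification of Step~1. The reverse inclusion uses $G \times G$-equivariance of $\Gbar$: since $G \times G$ acts transitively on each boundary stratum, the local $V$-filtration test in a neighborhood of a single boundary point (combined with the Borel--Weil-type identification of Step~1) pins down the Peter--Weyl shift on every matrix coefficient, completing the identification.
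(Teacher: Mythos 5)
Your proof is built on the same key idea as the paper's: the multi-Rees space of $\O(G)$ with the Peter--Weyl filtration is the Vinberg semigroup, and the GIT quotient to $\Gbarad$ sends the $\Lambda$-grading (hence the filtration) on $\O(\Vinb_G)$ to the filtration of $j_*\O_{\Gad}$ by order of pole along the boundary divisors $Z_i$. Your Step~1 is precisely this dictionary, and the paper's entire (one-sentence) proof consists of stating it. From there, both you and the paper reduce the statement about $D_{\Gad}$ to the function-level identification, invoking the earlier proposition that the matrix coefficients filtration on $D_{\Gad}$ equals the derivations filtration (the shift filtration on $\Derv(\O(G))$).

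Where your write-up is imprecise is in the assessment of which inclusion is formal and which carries content. The inclusion $\Gamma(\Gbar,(j_*\D_{\Gad})^V_{\leq\mathbf n}) \subseteq (D_{\Gad})_{\leq\lambda}$ is the genuinely immediate one: taking $V'=\Gbar$ in the definition of the $V$-filtration and applying the function-level dictionary gives the derivations shift condition directly, with no need for equivariance. The opposite inclusion --- a global operator $P$ that shifts the Peter--Weyl pieces also shifts the local pole-order sheaves $(\O_{\Gbar})_{\leq\mathbf k}$ over every open $V'$ --- is the direction where one has to pass from global sections to stalks, and it is here that the Vinberg/Rees machinery of Section~3 (Proposition~\ref{prop:rel-Rees=Rees-rel}, Lemma~\ref{lem:reldiff}) does real work: it packages the derivations filtration as an honest sheaf filtration on $\D_\pi$ over the affine Vinberg semigroup, which descends to $\Gbar$ and is there \emph{forced} to agree with the (also local) $V$-filtration because both are determined by the same filtration on the structure sheaf. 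Declaring the latter direction ``essentially immediate by the global identification'' while reserving an equivariance argument for the former reverses where the care is needed. The argument is recoverable, but you should make explicit that the Rees-algebra picture on the affine $\Vinb_G$ is what bridges the local sheaf condition and the global Peter--Weyl shift.
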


\begin{proof} The multi-Rees space of $\O_G$ with the matrix coefficients filtration is the Vinberg semigroup, and $X_0 = \horocycle$, and hence the $\Lambda$-filtration on $j_* \O_G$ by order of pole along the $Z_i$'s coincides with the matrix coefficients filtration.\end{proof}

Let $M$ be a regular holonomic $D$-module on $G$, so that $j_* M$ is a regular holonomic $D$-module on $\Gbar$. Note that:
\begin{itemize}
	\item  The space of global sections $\Gamma(\Gbar, j_*M) =\Gamma(G, M)$ has an action of $D_G$. 
	\item  $M$ on $\Gbar$ has a Kashiwara--Malgrange filtration based on order of vanishing along any of the smooth divisors $Z_i$. Taking all these filtrations at once, we obtain a $\Lambda$-filtration on the global sections $\Gamma(\Gbar, j_* M) = \Gamma(G, M)$.  
\end{itemize}

\begin{prop} The Kashiwara--Malgrange filtration on the global sections of $M$ is compatible with the matrix coefficients filtration on $D_G$.  \end{prop}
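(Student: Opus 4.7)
The strategy is to deduce the claim from two ingredients already in place: the tautological compatibility of the Kashiwara--Malgrange filtration on a holonomic $D$-module with the $V$-filtration on differential operators, together with Proposition~\ref{prop:vfiltration}, which identifies the global $V$-filtration on $j_*\D_G$ with the matrix coefficients filtration on $D_G$.

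First, I would make precise the multi-index Kashiwara--Malgrange filtration on $j_*M$ underlying the filtration appearing in the statement. Because the boundary divisors $Z_1,\ldots,Z_r$ of the wonderful compactification $\Gbar$ are smooth and meet with normal crossings, each divisor carries a single-divisor Kashiwara--Malgrange filtration $V^{k_i}_{Z_i}(j_*M)$; a standard argument for regular holonomic $D$-modules in the normal crossings setting shows that these filtrations pairwise commute. Their simultaneous intersection yields a $\Z^r$-indexed filtration $V^{\mathbf{k}}(j_*M)$ canonically attached to the holonomic structure of $j_*M$, and passing to global sections recovers the filtration $(\Gamma(G,M))_{\leq\mathbf{k}}$ named in the proposition.

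Second, I would record the compatibility with the action at the sheaf level: if a local section $P$ of $j_*\D_G$ takes $(\O_{\Gbar})_{\leq \mathbf{k}}$ into $(\O_{\Gbar})_{\leq \mathbf{k}-\mathbf{n}}$ for every $\mathbf{k}$, then the same $P$ carries $V^{\mathbf{k}}(j_*M)$ into $V^{\mathbf{k}-\mathbf{n}}(j_*M)$. This is immediate from the characterization of the single-divisor KM pieces by growth conditions along the $Z_i$, which are preserved by any operator shifting the order-of-pole filtration on $j_*\O_G$ by $\leq n_i$. Taking global sections on $\Gbar$ and invoking Proposition~\ref{prop:vfiltration} to identify $\Gamma(\Gbar, V^{\mathbf{n}}(j_*\D_G))$ with $(D_G)_{\leq\mathbf{n}}$ then gives
\[
(D_G)_{\leq\mathbf{n}} \cdot \bigl(\Gamma(G,M)\bigr)_{\leq\mathbf{k}} \subseteq \bigl(\Gamma(G,M)\bigr)_{\leq\mathbf{k}-\mathbf{n}},
\]
which is exactly the asserted compatibility.

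The main technical obstacle is the first step: verifying that the single-divisor Kashiwara--Malgrange filtrations along the different $Z_i$ commute so as to assemble into a well-defined $\Z^r$-filtration. The $G \times G$-equivariance of the picture is what makes this clean: since $G \times G$ preserves each orbit $\X_I$ (and in particular each divisor $Z_i$) setwise, the infinitesimal $\g \oplus \g$ vector fields are tangent to every boundary stratum, so the image of the quantum moment map already lies in the zeroth piece of each single-divisor $V$-filtration simultaneously. Combined with the fact that $\O(G)_{\leq\mathbf{n}}$ is generated by matrix coefficients whose pole orders along the $Z_i$ are exactly encoded by $\mathbf{n}$ (as used in the proof of Proposition~\ref{prop:vfiltration}), this yields the commutation of the single-divisor filtrations and completes the estimate after passing to global sections.
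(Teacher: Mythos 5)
The paper itself states this proposition without giving a proof---it appears immediately after Proposition~\ref{prop:vfiltration} and is treated as an observation---so you are supplying an argument the authors left implicit. Your core reduction is correct: the multi-index Kashiwara--Malgrange filtration on $j_*M$ is, by its defining axioms, compatible with the $V$-filtration on $j_*\D_G$; Proposition~\ref{prop:vfiltration} identifies that $V$-filtration on global sections with the matrix coefficients filtration on $D_G$; and taking $\Gamma(\Gbar,-)$ (which for $j_*$-sheaves is just $\Gamma(G,-)$) transports the compatibility to global sections.

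Two imprecisions are worth flagging. First, the containment $V^{\mathbf n}(j_*\D_G)\cdot V^{\mathbf k}(j_*M)\subseteq V^{\mathbf k - \mathbf n}(j_*M)$ is part of the \emph{definition} of a $V$-filtration on a $\D$-module (a good filtration over the $V$-filtered ring, with the extra monodromy and finiteness conditions); it should be cited as such rather than derived from a ``growth conditions'' heuristic, which is only a loose gloss even in the regular holonomic case. Second, your final paragraph's equivariance observation correctly places the image of the quantum moment map in the zeroth $V$-piece of $\D$ for every boundary divisor simultaneously, but that is a fact about $\D_{\Gbar}$, not about $M$; it does not by itself establish that the single-divisor Kashiwara--Malgrange filtrations on $j_*M$ commute and assemble into a canonical $\Z^r$-filtration. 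That commutativity (Sabbah's compatibility of $V$-filtrations for regular holonomic modules along a normal crossings divisor) is exactly the kind of ``standard result'' the paper delegates to its citation of Sabbah's MHM project, and it is cleaner to invoke it directly than to attempt to derive it from equivariance. With those two citations substituted in, your argument is complete.
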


Thus, given any regular holonomic module $M$, we can lift $M$ to an object in $D(G)\filt$ whose associated graded coincides with the Verdier specialization of $M$. 

\[ \xymatrix{   M = \bigcup_{\lambda \in \Lambda} M_{\leq \lambda}  & D(G)\filt \ar[d]_{\text{\rm forget}} \ar[rr]^{\text{\rm ass. gr.}} & & D_H(\mathcal Y) \ar[dd]^{\text{\rm forget}}\\
& D(G) & & \\
M \ar@{|~>}[uu] & D_\text{\rm rh}(G) \ar[u]^{\text{\rm forget}} \ar[rr]^{\text{\rm Sp}^\circ} & & D(\mathcal Y)
}\]

Let $ D_\text{\rm rh}^{\text{$Z$-l.f.}}(G)$ denote the category of regular holonomic $D$-modules on $G$ with the property that  the action of the center $Z(U\g)$ coming from the action of $\Gad$ on itself by left translations is locally finite.  Then, by \cite[Section 6]{BezrukavnikovCharactermodulesDrinfeld2012}, any object in $ D_\text{\rm rh}^{\text{$Z$-l.f.}}(G)$ specializes to a $H$-monodromic $D$-module on $\mathcal Y$. We can summarize the relation between our associated graded functor and the functor appearing in \cite{BezrukavnikovCharactermodulesDrinfeld2012} in the following diagram:

\[ \xymatrix{   M = \bigcup_{\lambda \in \Lambda} M_{\leq \lambda}  & D(G)\filt \ar[d]_{\text{\rm forget}} \ar[rrd]^{\text{ass. gr. }} & & \\
& D(G) & & D_H(\mathcal Y) \\
M \ar@{|~>}[uu] & D_\text{\rm rh}^{\text{$Z$-l.f.}}(G) \ar[u]^{\text{\rm forget}} \ar[rru]^{\text{\rm Sp}^\circ} & &
}\]

\subsection{Harish-Chandra bimodules}

Let $U(\g_\Delta) \hookrightarrow U\g \ot U\g$ be the inclusion of the diagonal, and $Z\g \ot Z\g \hookrightarrow U\g \ot U\g$ be the inclusion of the center. We may regard any $D$-module on $G$ as a module for $U\g \ot U\g$, $Z\g$, and $U(\g_\Delta)$ via the map $U\g \ot U\g \rightarrow D_G$. 

\begin{definition} A finitely-generated $U\g \ot U\g$-module $V$ is called a Harish-Chandra bimodule if it is  locally finite as a $U(\g_\Delta)$-module, and locally finite as a $Z\g \ot Z\g$-module. We denote the resulting category by $\mathcal{HC}$.  \end{definition}	

\begin{theorem} [\cite{GinsburgAdmissiblemodulessymmetric1989}] If $V$ is a Harish-Chandra bimodule, then its localization $\Loc_G(V)$ is a regular holonomic $D$-module on $G$, and locally finite for the action of $Z(U\g)$ coming from the action of $G$ on itself by left translations. \end{theorem}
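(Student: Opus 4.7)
The plan is to establish the two claims of the theorem separately: local finiteness under left translation by $Z(U\g)$ (which is essentially tautological), and regular holonomicity of $\Loc_G(V)$ (which is the substantive part).

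For the local $Z(U\g)$-finiteness, I observe that for any central $z\in Z(U\g)=\fZ$, the element $z\ot 1 \in U\g\ot U\g$ is central (because $z$ commutes with all of $U\g$). Hence its action on $\Loc_G(V)=\D_G\ot_{U\g\ot U\g}V$ descends from its action on $V$ as a left $\fZ$-module. Since $V$ is locally finite as a $\fZ\ot\fZ$-module, it is in particular locally finite as a left $\fZ$-module, and this local finiteness passes to the quotient.

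For coherence and holonomicity, I would proceed in three steps. \textbf{Step 1 (good filtration).} Using the $U(\g_\Delta)$-local finiteness, any finite set of generators of $V$ can be enlarged to a finite-dimensional $\g_\Delta$-stable, hence $G$-stable, generating subspace $V_0\subset V$. Using the order filtration on $\D_G$ and setting $M_k:=F_k\D_G\cdot V_0$ produces a good filtration on $\Loc_G(V)$; its associated graded is a coherent sheaf on $T^*G$, which we trivialize as $G\times\g^*$ by left-invariant vector fields. Under this trivialization, the left and right quantum moment maps become $\mu_L(g,\xi)=\xi$ and $\mu_R(g,\xi)=\Ad^*(g)\xi$, and the diagonal moment map is $\mu_\Delta(g,\xi)=\xi-\Ad^*(g)\xi$. \textbf{Step 2 (bounding the characteristic variety).} The $\fZ\ot\fZ$-local finiteness of $V$ means $V$ is annihilated by ideals of finite codimension in $\fZ\ot\fZ$; passing to associated graded, we find that $\gr M$ is scheme-theoretically supported on the vanishing locus of the augmentation ideals of $\gr(\fZ)=\O(\g^*)^G$ applied via both $\mu_L$ and $\mu_R$. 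This forces both $\xi$ and $\Ad^*(g)\xi$ into the nilpotent cone $\mathcal N\subset\g^*$, i.e., $(g,\xi)\in G\times\mathcal N$. The $U(\g_\Delta)$-local finiteness similarly forces $\mu_\Delta=0$ on the support, so the characteristic variety satisfies
\[ \mathrm{Ch}(\Loc_G(V)) \;\subseteq\; \{(g,\xi)\in G\times\mathcal N : \Ad^*(g)\xi=\xi\}. \]
\textbf{Step 3 (dimension count).} The latter variety fibers over $\mathcal N$ with fiber $Z_G(\xi)$ over $\xi$; since $\dim Z_G(\xi)=\dim G-\dim(G\cdot\xi)$ and the $G$-orbits exhaust $\mathcal N$, each stratum contributes dimension $(\dim G-\dim G\!\cdot\!\xi)+\dim G\!\cdot\!\xi=\dim G$. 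So the characteristic variety is of pure dimension $\dim G$, and $\Loc_G(V)$ is holonomic. Moreover, each $G$-orbit stratum in the above variety is precisely the conormal bundle to the corresponding $\Ad$-orbit in $G$ (under the identification $T^*G=G\times\g^*$), so $\mathrm{Ch}(\Loc_G(V))$ is a union of conormal bundles to the conjugacy-class stratification.

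For regularity, I would reduce to a known case using the structure theory of Harish-Chandra bimodules. Every HC bimodule is a subquotient of a finite sum of bimodules of the form $U\g\ot_{\fZ}E$ for a finite-dimensional $G$-representation $E$ (viewed with the diagonal $\g\oplus\g$-action via $u\ot e\mapsto u\cdot e$ on the right factor and free action on the left), whose matrix-coefficient localization is visibly a twist of $\O_G\otimes E$ by a central character -- a vector bundle with flat connection with regular singularities (vacuously, since $G$ is affine and smooth, regularity reduces to regularity along the $\Gbar$-boundary, which for such bundles is manifest). Regular holonomicity is preserved under subquotients and extensions in the abelian category of coherent $D$-modules with fixed characteristic variety; combined with Step~3, this yields the claim. \textbf{The main obstacle} is the regularity statement: the holonomicity and the $\fZ$-finiteness are straightforward consequences of the good-filtration and moment-map arguments, but establishing regular singularities requires invoking either the internal structure of HC bimodules as above, or a general principle (as in Kashiwara--Kawai) that holonomic $D$-modules whose characteristic variety is a union of conormal bundles to a stratification by orbits of a group action are automatically regular.
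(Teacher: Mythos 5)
The paper does not prove this theorem; it cites \cite{GinsburgAdmissiblemodulessymmetric1989}, so there is no in-paper proof to compare against. With that caveat, here is an assessment of your argument on its own terms.

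Your holonomicity argument (Steps 1--3) is the standard one and is essentially correct: $\g_\Delta$-local finiteness produces a $G$-stable generating subspace, hence a good filtration whose associated graded lives on $T^*G \simeq G\times\g^*$; local $\fZ\ot\fZ$-finiteness bounds the characteristic variety inside $G\times \mathcal{N}$ (note that the $\mu_L$ and $\mu_R$ conditions $\xi\in\mathcal{N}$ and $\mathrm{Ad}^*(g)\xi\in\mathcal{N}$ are equivalent, so they give only one constraint), and $\g_\Delta$-local finiteness separately forces $\mu_\Delta = \xi - \mathrm{Ad}^*(g)\xi = 0$; the dimension count then yields holonomicity.

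The local $\fZ$-finiteness argument is flawed as written. Centrality of $z\ot 1$ in $U\g\ot U\g$ does show that $v\mapsto (z\ot 1)v$ is a bimodule endomorphism of $V$, hence induces a $\D_G$-module endomorphism $\phi_z$ of $\Loc_G(V) = \D_G\ot_{U\g\ot U\g}V$. But $\phi_z$ is \emph{not} the action of $\mu_L(z)$ by left multiplication in the $\D_G$-module structure: they differ by $D\ot v\mapsto [\mu_L(z),D]\ot v$, and $\mu_L(z)$ is not central in $D_G$ (this already fails for $G=\C^\times$, where $\mu_L(z)$ is a power of $x\partial_x$). The repair is the usual one: left multiplication decomposes Hopf-algebraically into the adjoint action on $\D_G$ (which is locally finite, by the order filtration) composed with right multiplication (your $\phi_z$, locally finite since $V$ is $\fZ$-finite); combining these two locally finite actions gives the claim. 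It is fixable, but your stated reason is not a proof.

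Regularity is where the substance of the theorem lies, and both of your proposed routes have gaps. The fallback ``general principle'' --- that a holonomic $D$-module whose characteristic variety is a union of conormal bundles to the orbits of a group action is automatically regular --- is false: on $\C$, the $\D$-module generated by $e^{1/x}$ has characteristic variety equal to the union of the zero section and $T^*_0\C$, which is the conormal Lagrangian for the $\mathbb{G}_m$-orbit stratification of $\C$, yet it is irregular at the origin. Your main route (reducing to localizations of modules $U\g\ot_\fZ E$ and declaring their regularity ``manifest'') papers over exactly what needs to be proved: showing that these induced connections have regular singularities along the boundary of $\Gbar$ is an actual argument, and is precisely the nontrivial content that Ginzburg's paper (or, in other treatments, a comparison with Beilinson--Bernstein localization on the flag variety, or with character sheaves via Riemann--Hilbert) supplies.
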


It follows that our associated graded functor matches with Verdier specialization for localizations of Harish-Chandra bimodules, and  we have the following commutative diagram.

\[ \xymatrix{ &  & & D(G)\filt \ar[drr]^{\text{\rm ass. gr.}}  \ar[d]^{\text{\rm forget}} & & \\
\mathcal{HC} \ar@{^{(}->}[rr] \ar[drrr] & &  U\g\ot U\g\dmod  \ar[r]^{\qquad \Loc_G} \ar[ru] &   D(G)  & & D_H(\mathcal Y)\\
& &  & D_{\text{\rm rh}}^{\text{$Z$-l.f.}}(G)\ar[u]_{\text{\rm forget}} \ar[urr]_{\text{\rm Sp}^\circ} &
 }\]
 
 \section{The multi-temporal wave equation}\label{wave section}
We now describe an observation about the relation of localization and the multi-temporal wave equation of Semenov-Tian-Shansky. For simplicity, and to ease notation, in this section we assume that $G$ is of adjoint type. 

First, we can extend the localization construction to allow differentiation along the times of the Vinberg degeneration. We define the following algebras:
$$UU:=U\fg\ot_\fZ U\fg\ot U\fh\longrightarrow \wt{UU}:=U\fg\ot_\fZ U\fg\ot_\fZ U\fh.$$
The Vinberg semigroup $\Vinb_G$ carries an action of $G \times G \times H$, which induces an infinitesimal action of $UU$ on $\Vinb_G$, and on the horocycle space $\Y$, this action factors through $\wt{UU}$. Given $\lambda\in \fh^\ast$ and the corresponding one-dimensional representation $\C_\lambda$ of $U\fh$,  there is a localization functor
$$\Loc_{\Vinb_G, \lambda} : U\g \ot_\fZ U\g\dmod \to D^{\text{\rm log}}(\Vinb_G)$$
$$M\mapsto (M\ot \C_\lambda)\ot_{UU} \D^{\text{log}}_{\Vinb_G}.$$
On the group locus $G\times H\subset \Vinb_G$, this functor couples the relative localization to a rank one flat connection on $H$ with monodromy $\exp(\lambda)$. At the other extreme, on  $\Y$ this localization picks out the $\lambda$-monodromic Beilinson--Bernstein localization, as a twisted $D$-module on $G/B\times B^-\backslash G$, out of the full horocycle localization. This is the counterpart of taking the $\lambda$-homogeneous component of the asymptotics of a matrix coefficient (i.e., the $\lambda$-contribution to the Mellin transform of the horocycle transform). 

A more interesting construction couples the relative systems with the (complexified) multitemporal wave equation for symmetric spaces\footnote{Here we are considering the group case -- to get the multi-temporal wave equations for other symmetric spaces $G/K$ one replaces  $G$ by the Vinberg degenerations of $G/K$ to $G/MN$, with times given by $A$, as in~\cite{AbeJacquetFunctorConcini2015, ChenformulageometricJacquet2017}.} of Semenov-Tian-Shansky~\cite{Semenov-Tian-ShanskyHarmonicanalysisRiemannian1976,PhillipsScatteringtheorysymmetric1993,HelgasonIntegralgeometrymultitemporal1998} on $G\times H$.

\begin{definition}
The Vinberg wave localization is the functor 
$$\wt{\Loc}_{\Vinb_G}: U\fg\ot_\fZ U\fg\dmod \longrightarrow \D_H^{\text{log}}(\Vinb_G)$$ defined by 
$$M\mapsto (M\ot_\fZ U\fh) \ot_{UU} \D^{\text{log}}_{\Vinb_G}$$
\end{definition}

\begin{rmk} The wave localization defines a log $D$-module on $\Vinb_G$, whose restriction to the horocycle space $\Y$ agrees with that of the relative localization functor $\Loc_{\Vinb_G}$, i.e. usual Beilinson--Bernstein localization, tensoring $M$ only over $U\fg\ot_\fZ U\fg$. On the group locus, which is identified with $G\times H$, the wave localization is typically $|W|$ times larger than the relative localization functor, where  $W$ is the Weyl group of $G$. The reason for this is that we induce to the quotient $UU\to \wt{UU}$ from the subalgebra $U\fg\ot_\fZ U\fg$ acting fiberwise, i.e., we couple to a rank $|W|$ flat connection in the transverse directions. 
\end{rmk}

Recall that the multi-temporal wave equation is the system of equations on the product $\fa\times G/K$ of a (real!) symmetric space with the maximally split Cartan, given by 
\begin{equation}\label{wave}
\{ HC(z)\cdot f - z\cdot f=0,\hskip.3in z\in D(G/K)^G\}
\end{equation}
where we use the Harish-Chandra isomorphism of $G$-invariant differential operators on the symmetric space with $\Sym(\fa)^W$, considered as constant coefficient differential operators on the Lie algebra $\fa$.

The wave localization of the free module $Wv=\wt{\Loc}_{\Vinb_G}(U\fg\ot_\fZ U\fg)$ restricted to the group locus $G\times H\subset \Vinb_G$ precisely recovers the (complexified) wave equation. Namely, 
a solution of $Wv|_{G\times H}$ in a $D$-module $\cF$ is a section $f$ of $\cF$ satisfying the system of equations~\ref{wave} with $\fZ$ playing the role of $D(G/K)^G$ through the Harish-Chandra isomorphism
$$HC:\fZ=D_G^{G\times G}\longrightarrow U\fh^W\subset U\fh=D_H^H$$
taking elements of the center to constant coefficient differential operators on the Cartan.
Here the time variables are the coordinates of the Lie algebra $\fh$, i.e., we've written the wave equation in exponentiated time. Approaching the complement $\A^r\setminus H$ (in particular the origin) is thus considering infinite time behavior of the equation, i.e., the scattering data.

More generally, the wave localizations $\wt{\Loc}_{\Vinb_G}(M)$ are systems of equations combining the matrix coefficient $D$-modules on $G$ with the wave equation along $H$. The restrictions of these localizations to the asymptotic cone and other strata in $\Vinb_G$ are describing the scattering data of the system, i.e., the differential equations satisfied by scattering data of solutions.

\section{Application: classical asymptotics of matrix elements of admissible representations}\label{real groups section}
In this section we explain how some classical results on the asymptotics of matrix coefficients of admissible representations (for which we follow Casselman and Mili\v{c}i\'{c}~\cite{CasselmanAsymptoticbehaviormatrix1982}) can be recovered from the perspectives put forward in this paper. 

We fix a real reductive group $G_\R$ with an Iwasawa decomposition $G_\R=K_\R A_\R N_\R$ (we denote by $G,K,A$ etc.\ the complexifications of the corresponding real groups). We wish to describe asymptotics of matrix elements on $G_\R$ using the wonderful compactification of $G$. Recall that the Langlands decomposition of the corresponding minimal parabolic subgroup is given by $P_\R=M_\R A_\R N_\R$.  Let $I$ be the subset of positive simple roots corresponding to $P$. As we recalled in Section \ref{sec:parabolic}, the corresponding $G\times G$ orbit closure $\overline{\X}_I  \subset \Gbar$ in the wonderful compactification fibers over $G/P\times P^-\backslash G$. The Iwasawa decomposition defines a real point in the partial flag variety, $\{N,N^-\}\in G/P\times P^-\backslash G$. Moreover, the fiber $F_N$ of $\overline{\X}_I$ over $\{N,N^-\}$ is identified canonically with $M$. The fiber $\wt{F}_N$ over $\{N,N^-\}$ in the deleted normal cone $\Y_I$ of the orbit closure $\overline{\X}_I$ is identified non-canonically with $L=MA$ -- we consider it as an $L$-torsor, which is naturally an $A$-torsor over $F_N\simeq M$.

The Cartan decomposition $G_\R=K_\R A_\R K_\R$ reduces the study of noncompact directions in $G_\R$, and hence of asymptotics, to the torus $A_\R$ (or via the exponential ma $$\exp:\R^d\simeq \fa_\R\stackrel{\sim}{\longrightarrow} A_\R$$ to the Lie algebra $\fa_\R$), and in fact to the negative Weyl chamber $A_\R^-\subset A_\R$. More specifically, the choice of Iwasawa decomposition determines a distinguished point $x \in F_N \subset \overline{\X}_I,$ corresponding to $ \{1\}\in M$ under the identification $M \simeq F_N$. The point $x$  lies in the  closure of $A_\R$ in the wonderful compactification $\Gbar$. We study matrix coefficients by their expansion around the point $x$. It is important to note that a sufficiently small neighborhood of $x$ at infinity  in $A$ stays in the regular semisimple locus (i.e., we are going off to infinity in a generic direction ``away from the walls'' in $A_\R$). 

Let $V_{\rm top}$ denote an admissible representation of $G_\R$, and $V$ the corresponding $(\g, K)$-module consisting of the $K_\R$-finite vectors in $V_{\rm top}$. Given a $K$-finite vector $v\in V\subset V_{\rm top}$ and a $K$-finite covector $v'\in V'\subset V_{\rm top}^*$, we consider the matrix coefficient 
$$ G_\R \ni g \mapsto m_{v,v'}(g)= \langle v', g\cdot v\rangle.$$ 
The $K_\R$-finiteness of $v,v'$ allows us to consider $m_{v,v'}$ instead as a smooth section of a vector bundle on $K_\R \backslash G_\R /K_\R$. In the  terminology of~\cite{CasselmanAsymptoticbehaviormatrix1982}, this section is   a {\em $\tau$-spherical function}, where $\tau$ is a representation of $K_\R\times K_\R$ carried by $v\ot v'$. The asymptotics of $\tau$-spherical functions, and thus of $K$-finite matrix coefficients, reduces to the study of their restriction as vector-valued functions to the negative Weyl chamber $A_\R^-\subset A_\R$.
 
We approach the matrix coefficient $m_{v,v'}$ only through the differential equations it satisfies. First, the localization of the $(\fg,K)$-bimodule $V\ot V'$ defines a $K$-biequivariant $D$-module on $G$, i.e., a $D$-module $\cM_{v,v'}$ on $K\backslash G/K$. The matrix coefficient $m_{v,v'}$ itself defines a smooth solution of $\cM_{v,v'}$ along the real locus $K_\R\backslash G_\R/K_\R$. 
By admissibility, the center $\fZ \subset \Ug$ acts on $V$ through a finite dimensional quotient. This implies that $\cM_{v,v'}$ is an admissible $D$-modules on $G/K$, in particular regular holonomic and a local system on the regular semisimple locus, from which the real analyticity of $m_{v,v'}$ on $A_\R^-$ follows.

Deligne's theory of regular singular equations~\cite{DeligneEquationsDifferentiellesPoints1970}, as explained by Casselman and Mili\v{c}i\'{c} in the Appendix to \cite{CasselmanAsymptoticbehaviormatrix1982}, provides an explicit form for the asymptotics of their solutions. Namely we are considering a regular singular $D$-module on the torus $A$ which is lisse (a flat connection) in a neighborhood of a chosen point at infinity -- the point $0$ in suitable coordinates $A\hookrightarrow \C^r$ (given by a chosen basis of weights). Then we find that solutions in the neighborhood of  $A_\R^-$ can be written as a finite sum\footnote{The sum is easily normalized so as to make it unique.} of expressions \begin{equation}\label{Deligne asymptotics}
 F_{\lambda,m} z^\lambda \log^m z
 \end{equation}  
 where $z^\lambda$ ($z\in A$) are the characters of $A$, i.e., complex exponentials $e^{\lambda t}$ as functions of $t\in \mathfrak a$, $m$ is a positive integer, and the coefficients $F$ are regular holomorphic functions.
 Casselman and Mili\v{c}i\'{c} then deduce Harish-Chandra's results that the crucial growth properties (temperedness and $p$-integrability modulo center) of the matrix coefficient -- and, varying $v,v'$, of the representation $V$ itself -- are completely controlled by the leading exponent $\lambda$ in the above expression (with respect to the dominance order).
 
We would thus like to recover the information of the exponents $\lambda_i$ and log multiplicities $m_i$ from representation theory. We first observe that this information (for a regular singular $D$-module on the torus $A$, which is a local system in the deleted neighborhood of a point at infinity) is contained in the Verdier specialization of the $D$-module to the deleted tangent space at the point. In other words, we linearize the $D$-module around the chosen point, retaining the local monodromy information. This specialization is a finite rank $A$-monodromic $D$-module on an $A$-torsor, which is equivalent to a finitely supported module over $U\fa=\C[\fa^\ast]$. The set-theoretic support and multiplicities of this module reproduce the $\lambda_i$ (complex characters of $A_\R$) and $m_i$. 

This specialization on $A$ is part of the data of the specialization of our matrix coefficient $D$-module at infinity, which we described using parabolic restriction. Namely, the specialization of $\cM_{v,v'}$ along the stratum $\overline{\X}_I$ is a $K\times K$-equivariant $A$-monodromic sheaf on $\Y_I$. Its restriction to the fiber $\wt{F}_N\simeq L=MA$ is an $M\times M$-equivariant $A$-monodromic sheaf $\cM_{v,v',N}$, i.e., equivalent to an $A$-monodromic sheaf on $A$ (or rather an $A$-torsor) valued in representations of $M$. By Theorem~\ref{thm:parabolic}, one can identify this sheaf with the $M$-equivariant matrix coefficient $D$-module of the parabolic restriction of $V\ot V'$, which is an admissible $(\fl=\fa\oplus\fm,M)$-bimodule. Specifically, this parabolic restriction is given by diagonal $\fa$-coinvariants on the zeroth $\fn\oplus \fn^-$-homology of $V\ot V'$. Thus, $\cM_{v,v',N}$ is a union of finite dimensional $(\fa,M)$-submodules, or, dually, of finitely supported coherent sheaves on $\fa^\ast$ valued in representations of $M$. Moreover, eventually (e.g. filtering by highest weights of $K$-representations) this union will contain the image of our $K\times K$-finite vector $v\ot v'$. In particular we have established the following:

\begin{theorem}\label{asymptotics theorem}
The $K$-finite matrix coefficients $m_{v,v'}$ of an admissible representation of $G_\R$ have an asymptotic expansion on the negative Weyl chamber $A_\R^-$ of expressions of the form~\ref{Deligne asymptotics}, where the complex characters $\lambda_i$ of $A_\R$ and powers of logarithms $m_i$ appear in the generalized eigenvalue decomposition of the $\fa$-action on $\fn$-coinvariants.\end{theorem}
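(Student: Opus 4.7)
The plan is to package the actual matrix coefficient $m_{v,v'}$ as a solution of a regular holonomic $D$-module, pass to its asymptotic behavior at infinity in $A_\R$ by Verdier specialization, and then use Theorem~\ref{thm:parabolic} to compute that specialization in representation-theoretic terms.

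First, I would form the matrix coefficient $D$-module $\cM_{v,v'} := \Loc_G(V\ot V')$ on $G$ by applying the matrix coefficients functor to the $U\fg\ot U\fg$-module generated by $v\ot v'$ inside the $(\fg,K)$-bimodule $V\ot V'$. Since $V$ is admissible, $V\ot V'$ is a Harish-Chandra bimodule, so by Ginsburg's theorem (cited in the Harish-Chandra bimodule subsection) $\cM_{v,v'}$ is regular holonomic on $G$. The actual smooth matrix coefficient $m_{v,v'}$, viewed as a real-analytic section on $G_\R$, is by construction a solution of $\cM_{v,v'}$, and is real-analytic on the regular semisimple locus.

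Next I would reduce to the negative Weyl chamber $A_\R^-\subset A_\R$ via the Cartan decomposition $G_\R = K_\R A_\R K_\R$ together with the $K_\R\times K_\R$-equivariance of $m_{v,v'}$. Restricted along the inclusion of a sufficiently small punctured neighborhood of the distinguished point $x\in \overline{\X}_I\cap \overline{A}\subset \Gbar$, the $D$-module $\cM_{v,v'}$ is lisse (since this neighborhood lies in the regular semisimple locus), and the chosen boundary $\overline{\X}_I$ provides a good compactification of $A$ with normal crossing boundary divisors. By Deligne's theorem on regular singular connections, as explained in the Appendix of \cite{CasselmanAsymptoticbehaviormatrix1982}, any flat section of such a local system in a wedge-shaped real region approaching the boundary admits a canonical asymptotic expansion of the form~\eqref{Deligne asymptotics}, whose exponents $\lambda_i$ and log-multiplicities $m_i$ are determined by and recover the Verdier specialization of $\cM_{v,v'}$ at $x$, viewed as an $A$-monodromic $D$-module on the deleted tangent $A$-torsor.

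The heart of the argument is then the identification of this Verdier specialization. Here I would apply Theorem~\ref{thm:parabolic} to the subset $I$ of simple roots corresponding to the minimal parabolic $P=MAN$: the partial asymptotics functor $\Asymp_I$ applied to $\cM_{v,v'}$, followed by restriction to the fiber $\wt F_N\simeq L=MA$ over $\{N,N^-\}\in G/P\times P^-\backslash G$, is canonically identified with the matrix coefficient $D$-module of the parabolic restriction $(V\ot V')_{\fn\oplus\fn^-}$ as a $U\fl\ot U\fl$-module, where $\fl=\fa\oplus\fm$. To convert this from partial asymptotics back to an analytic statement, I invoke Proposition~\ref{prop:vfiltration}, which identifies our associated-graded functor with the $V$-filtration Verdier specialization for regular holonomic modules. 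Since the admissible $(\fl,M)$-bimodule $(V\ot V')_{\fn\oplus\fn^-}$ decomposes as a union of finite-dimensional generalized $\fa$-weight spaces, valued in $M$-representations, its matrix coefficient $D$-module on the $L$-torsor $\wt F_N$ is an $A$-monodromic $D$-module whose monodromy exponents (with multiplicities) are exactly the generalized eigenvalues and Jordan block sizes of $\fa$ acting on $\fn$-coinvariants. This matches the exponents in~\eqref{Deligne asymptotics} and proves the theorem.

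The main obstacle will be the compatibility between the algebraic partial asymptotics/Verdier specialization on one side and the analytic asymptotic expansion of an actual real-analytic solution on $A_\R^-$ on the other: concretely, one needs to know that Deligne's formal expansion at $x$ is a faithful analytic invariant that can be read off the specialized $D$-module, and that a cofinal filtration of the parabolically restricted module by finite-dimensional $\fa$-stable subspaces captures the image of any given $K\times K$-finite test vector $v\ot v'$ (which follows from admissibility). Given those inputs, which are already worked out in \cite{CasselmanAsymptoticbehaviormatrix1982} for the rank-one direction and extend to the multivariable regular singular setting, everything else is formal from the earlier sections of the paper.
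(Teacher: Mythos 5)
Your proposal follows essentially the same line of argument as the paper: form the matrix coefficient $D$-module by localization, reduce to $A_\R^-$ via Cartan decomposition, invoke Deligne's theory for the form of the expansion, identify its exponents and log-multiplicities with the Verdier specialization at the distinguished boundary point, and compute that specialization via Theorem~\ref{thm:parabolic} and Proposition~\ref{prop:vfiltration} as the matrix coefficient $D$-module of the parabolic restriction, whose $\fa$-generalized weight decomposition gives the answer. The only minor variations (localizing the cyclic sub-bimodule generated by $v\ot v'$ rather than all of $V\ot V'$, and explicitly naming Ginsburg's theorem) are harmless and do not change the substance.
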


This recovers in particular the result of~\cite{CasselmanAsymptoticbehaviormatrix1982} that the leading exponents of $V$ (with respect to dominance order) appear as weights of the $A$-action on the $\mathfrak n$-homology $H_0(\mathfrak n, V)$ -- a special case of a theorem of Mili\v{c}i\'{c}~\cite{MilicicAsymptoticbehaviourmatrix1977} that the leading exponents are precisely the minimal weights of the $A$-action on $\mathfrak n$-homology, for $\fn=\Lie(N)$ associated to the radical of the minimal parabolic $P_\R$. As explained in~\cite{CasselmanAsymptoticbehaviormatrix1982}, one can then deduce Harish-Chandra's results that the crucial growth properties (temperedness and $p$-integrability modulo center) of the matrix coefficient -- and, varying $v,v'$, of the representation $V$ itself -- are completely controlled by the leading exponent.

More generally, Theorem~\ref{thm:parabolic} can be used to study ``asymptotics along a wall'' as in~\cite{CasselmanAsymptoticbehaviormatrix1982} -- the asymptotic behavior of matrix coefficients in other directions is controlled by the matrix coefficients of other parabolic restrictions of $V$ (i.e., along other strata in $\Gbar$).

\bibliography{BZ-Ganev.bib}

@article{li_regularity_2020,
	title = {On the regularity of {D}-modules generated by relative characters},
	issn = {1531-586X},
	url = {https://doi.org/10.1007/s00031-020-09624-x},
	doi = {10.1007/s00031-020-09624-x},
	abstract = {Following the ideas of Ginzburg, for a subgroup K of a connected reductive ℝ-group G we introduce the notion of K-admissible D-modules on a homogeneous G-variety Z. We show that K-admissible D-modules are regular holonomic when K and Z are absolutely spherical. This framework includes: (i) the relative characters attached to two spherical subgroups H1 and H2, provided that the twisting character χi factors through the maximal reductive quotient of Hi, for i = 1; 2; (ii) localization on Z of Harish-Chandra modules; (iii) the generalized matrix coeficients when K(ℝ) is maximal compact. This complements the holonomicity proven by Aizenbud–Gourevitch–Minchenko. The use of regularity is illustrated by a crude estimate on the growth of K-admissible distributions based on tools from subanalytic geometry.},
	journal = {Transformation Groups},
	author = {Li, Wen-Wei},
	year = {2020},
}

@article{BeilinsonLocalisationgmodules1981,
  title = {Localisation de G-Modules},
  journal = {C. R. Acad. Sci. Paris},
  author = {Beilinson, Alexander and Bernstein, Joseph},
  year = {1981},
  pages = {15-18},
  file = {/Users/iganev/Zotero/storage/IVHC23T5/10007767593.html},
  doi = {10.1112/jlms.12193}
}

@article{BezrukavnikovCharactermodulesDrinfeld2012,
  title = {Character {$D$}-modules via {D}rinfeld center of {H}arish-{C}handra bimodules},
  volume = {188},
  issn = {0020-9910, 1432-1297},
  doi = {10.1007/s00222-011-0354-3},
  abstract = {The category of character D-modules is realized as Drinfeld center of the abelian monoidal category of Harish-Chandra bimodules. Tensor product of Harish-Chandra bimodules is related to convolution of D-modules via the long intertwining functor (Radon transform) by a result of Beilinson and Ginzburg (Represent. Theory 3, 1–31, 1999). Exactness property of the long intertwining functor on a cell subquotient of the Harish-Chandra bimodules category shows that the truncated convolution category of Lusztig (Adv. Math. 129, 85–98, 1997) can be realized as a subquotient of the category of Harish-Chandra bimodules. Together with the description of the truncated convolution category (Bezrukavnikov et al. in Isr. J. Math. 170, 207–234, 2009) this allows us to derive (under a mild technical assumption) a classification of irreducible character sheaves over ℂ obtained by Lusztig by a different method.We also give a simple description for the top cohomology of convolution of character sheaves over ℂ in a given cell modulo smaller cells and relate the so-called Harish-Chandra functor to Verdier specialization in the De Concini–Procesi compactification.},
  number = {3},
  journal = {Inventiones mathematicae},
  author = {Bezrukavnikov, Roman and Finkelberg, Michael and Ostrik, Victor},
  year = {2012},
  pages = {589-620},
  file = {/Users/iganev/Zotero/storage/CKXHJNQE/Bezrukavnikov et al. - 2012 - Character Emphasis Type=ItalicDEmphasis-mod.pdf;/Users/iganev/Zotero/storage/NE6DFJ9H/s00222-011-0354-3.html}
}

@article{BorhoDifferentialoperatorshomogeneous1989,
  title = {Differential Operators on Homogeneous Spaces. {{II}} : Relative Enveloping Algebras},
  volume = {117},
  issn = {0037-9484, 2102-622X},
  shorttitle = {Differential Operators on Homogeneous Spaces. {{II}}},
  doi = {10.24033/bsmf.2117},
  number = {2},
  journal = {Bulletin de la Soci\'et\'e Math\'ematique de France},
  author = {Borho, Walter and Brylinski, Jean-Luc},
  year = {1989},
  pages = {167-210}
}

@article{Briontotalcoordinatering2007,
  series = {Special Issue in Honor of Ernest Vinberg},
  title = {The Total Coordinate Ring of a Wonderful Variety},
  volume = {313},
  issn = {0021-8693},
  doi = {10.1016/j.jalgebra.2006.12.022},
  abstract = {We study the cone of effective divisors and the total coordinate ring of wonderful varieties, with applications to their automorphism group. We show that the total coordinate ring of any spherical variety is obtained from that of the associated wonderful variety by a base change of invariant subrings.},
  number = {1},
  journal = {Journal of Algebra},
  author = {Brion, Michel},
  year = {2007},
  keywords = {Spherical variety,Total coordinate ring,Wonderful variety},
  pages = {61-99},
  file = {/Users/iganev/Zotero/storage/3365IZPC/S002186930700035X.html}
}

@article{ChenformulageometricJacquet2017,
  title = {A Formula for the Geometric {{Jacquet}} Functor and Its Character Sheaf Analogue},
  volume = {27},
  issn = {1016-443X, 1420-8970},
  doi = {10.1007/s00039-017-0413-z},
  abstract = {Let (G,K) be a symmetric pair over the complex numbers, and let X=K$\setminus$GX=K$\setminus$G\{X=K $\backslash$backslash G\} be the corresponding symmetric space. In this paper we study a nearby cycles functor associated to a degeneration of X to MN$\setminus$GMN$\setminus$G\{MN $\backslash$backslash G\}, which we call the ``wonderful degeneration''. We show that on the category of character sheaves on X, this functor is isomorphic to a composition of two averaging functors (a parallel result, on the level of functions in the p-adic setting, was obtained in [BK,SV]). As an application, we obtain a formula for the geometric Jacquet functor of [ENV] and use this formula to give a geometric proof of the celebrated Casselman's submodule theorem and establish a second adjointness theorem for Harish-Chandra modules.},
  number = {4},
  journal = {Geometric and Functional Analysis},
  author = {Chen, Tsao-Hsien and {Yom Din}, Alexander},
  year = {2017},
  pages = {772-797},
  file = {/Users/iganev/Zotero/storage/3MPETAQG/Chen and Din - 2017 - A formula for the geometric Jacquet functor and it.pdf;/Users/iganev/Zotero/storage/HH2V9U4Y/s00039-017-0413-z.html}
}

@book{ChrissRepresentationTheoryComplex2009,
  title = {Representation {{Theory}} and {{Complex Geometry}}},
  isbn = {978-0-8176-4938-8},
  abstract = {"The book is largely self-contained...There is a nice introduction to symplectic geometry and a charming exposition of equivariant K-theory. Both are enlivened by examples related to groups...An attractive feature is the attempt to convey some informal `wisdom' rather than only the precise definitions. As a number of results [are] due to the authors, one finds some of the original excitement. This is the only available introduction to geometric representation theory...it has already proved successful in introducing a new generation to the subject." (Bulletin of the AMS)},
  publisher = {{Springer Science \& Business Media}},
  author = {Chriss, Neil and Ginzburg, Victor},
  month = dec,
  year = {2009},
  keywords = {Mathematics / Algebra / General,Mathematics / Group Theory,Science / Physics / Mathematical \& Computational,Mathematics / Geometry / Algebraic,Mathematics / Geometry / Analytic,Mathematics / Topology}
}

@article{ConciniCompactificationsymmetricvarieties1999,
  title = {Compactification of Symmetric Varieties},
  volume = {4},
  issn = {1083-4362, 1531-586X},
  doi = {10.1007/BF01237359},
  abstract = {The symmetric varieties considered in this paper are the quotientsG/H, whereG is an adjoint semi-simple group over a fieldk of characteristic $\not =$ 2, andH is the fixed point group of an involutorial automorphism ofG which is defined overk. In the casek=C, De Concini and Procesi (1983) constructed a ``wonderful'' compactification ofG/H. We prove the existence of such a compactification for arbitraryk. We also prove cohomology vanishing results for line bundles on the compactification.},
  number = {2-3},
  journal = {Transformation Groups},
  author = {Concini, C. De and Springer, T. A.},
  year = {1999},
  pages = {273-300},
  file = {/Users/iganev/Zotero/storage/KHHYLLS7/Concini and Springer - 1999 - Compactification of symmetric varieties.pdf;/Users/iganev/Zotero/storage/MSQ3MPMA/BF01237359.html}
}

@article{DrinfeldGeometricconstantterm2016,
  title = {Geometric Constant Term Functor(S)},
  volume = {22},
  issn = {1022-1824, 1420-9020},
  doi = {10.1007/s00029-016-0269-3},
  abstract = {We study the Eisenstein series and constant term functors in the framework of geometric theory of automorphic functions. Our main result says that for a parabolic P$\subset$GP$\backslash$subset G with Levi quotient M, the !-constant term functor CT!:D-mod(BunG)$\rightarrow$D-mod(BunM)$\backslash$begin\{aligned\}\{$\backslash$text \{CT\}\}\_!:\{$\backslash$text \{D-mod\}\}(\{$\backslash$text \{Bun\}\}\_G)$\backslash$rightarrow \{$\backslash$text \{D-mod\}\}(\{$\backslash$text \{Bun\}\}\_M)$\backslash$end\{aligned\}is canonically isomorphic to the *-constant term functor CT-${_\ast}$:D-mod(BunG)$\rightarrow$D-mod(BunM),$\backslash$begin\{aligned\} \{$\backslash$text \{CT\}\}\^-\_*:\{$\backslash$text \{D-mod\}\}(\{$\backslash$text \{Bun\}\}\_G)$\backslash$rightarrow \{$\backslash$text \{D-mod\}\}(\{$\backslash$text \{Bun\}\}\_M), $\backslash$end\{aligned\}taken with respect to the opposite parabolic P-P\^-.},
  number = {4},
  journal = {Selecta Mathematica},
  author = {Drinfeld, Vladimir and Gaitsgory, Dennis},
  year = {2016},
  pages = {1881-1951},
  file = {/Users/iganev/Zotero/storage/27AKV4ER/Drinfeld and Gaitsgory - 2016 - Geometric constant term functor(s).pdf;/Users/iganev/Zotero/storage/SWQM8Z8W/s00029-016-0269-3.html}
}

@article{EmertongeometricJacquetfunctor2004,
  title = {A Geometric {{Jacquet}} Functor},
  volume = {125},
  issn = {0012-7094, 1547-7398},
  doi = {10.1215/S0012-7094-04-12523-0},
  abstract = {The object of this paper is to describe the Jacquet module functor on Harish-Chandra modules via the localisation method of Be{\u \i}linson and Bernstein.},
  number = {2},
  journal = {Duke Mathematical Journal},
  author = {Emerton, M. and Nadler, D. and Vilonen, K.},
  year = {2004},
  pages = {267-278},
  file = {/Users/iganev/Zotero/storage/5W5FJTHQ/1098892270.html}
}

@article{GinsburgAdmissiblemodulessymmetric1989,
  title = {Admissible Modules on a Symmetric Space},
  number = {173-74},
  journal = {Ast\'erisque},
  author = {Ginsburg, Victor},
  year = {1989},
  pages = {199--255}
}

@article{MartensCompactificationsreductivegroups2016,
  title = {Compactifications of Reductive Groups as Moduli Stacks of Bundles},
  volume = {152},
  number = {1},
  journal = {Compositio Mathematica},
  author = {Martens, Johan and Thaddeus, Michael},
  year = {2016},
  pages = {62--98},
  file = {/Users/iganev/Zotero/storage/YUGW6WSJ/Martens and Thaddeus - 2016 - Compactifications of reductive groups as moduli st.pdf;/Users/iganev/Zotero/storage/WUWAK8X2/AA2539B1F395AB9DECD24ECCB9BB1046.html}
}

@article{Vinbergreductivealgebraicsemigroups1995,
  title = {On Reductive Algebraic Semigroups},
  volume = {169},
  journal = {Translations of the American Mathematical Society-Series 2},
  author = {Vinberg, Ernest B.},
  year = {1995},
  pages = {145--182},
  file = {/Users/iganev/Zotero/storage/ZVPHFJPW/Vinberg - 1995 - On reductive algebraic semigroups.pdf;/Users/iganev/Zotero/storage/QWWZP7FR/books.html}
}

@article{Ganevwonderfulcompactificationquantum,
  title = {The Wonderful Compactification for Quantum Groups},
  journal = {Journal of the London Mathematical Society},
  volume = {99},
  number = {3},
  year = {2019},
  author = {Ganev, Iordan},
  pages = {778--806},
  file = {/Users/iganev/Zotero/storage/5IAN4R7N/Ganev - 2016 - The wonderful compactification for quantum groups.pdf;/Users/iganev/Zotero/storage/4NARTJF3/1609.html}
}

@book{TakeuchiDModulesPerverseSheaves2007,
  title = {D-{{Modules}}, {{Perverse Sheaves}}, and {{Representation Theory}}},
  isbn = {978-0-8176-4523-6},
  abstract = {D-modules continues to be an active area of stimulating research in such mathematical areas as algebra, analysis, differential equations, and representation theory.  Key to D-modules, Perverse Sheaves, and Representation Theory is the authors' essential algebraic-analytic approach to the theory, which connects D-modules to representation theory and other areas of mathematics. Significant concepts and topics that have emerged over the last few decades are presented, including a treatment of the theory of holonomic D-modules, perverse sheaves, the all-important Riemann-Hilbert correspondence, Hodge modules, and the solution to the Kazhdan-Lusztig conjecture using D-module theory. To further aid the reader, and to make the work as self-contained as possible, appendices are provided as background for the theory of derived categories and algebraic varieties. The book is intended to serve graduate students in a classroom setting and as self-study for researchers in algebraic geometry, and representation theory.},
  publisher = {{Springer Science \& Business Media}},
  author = {Takeuchi, Kiyoshi and Hotta, Ryoshi and Tanisaki, Toshiyuki},
  month = oct,
  year = {2007},
  keywords = {Mathematics / Algebra / General,Mathematics / Group Theory,Mathematics / Algebra / Abstract,Mathematics / Geometry / Algebraic}
}

@article{LuMomentmapsquantum1993,
  title = {Moment Maps at the Quantum Level},
  volume = {157},
  issn = {0010-3616, 1432-0916},
  abstract = {Project Euclid - mathematics and statistics online},
  number = {2},
  journal = {Communications in Mathematical Physics},
  author = {Lu, Jiang-Hua},
  year = {1993},
  pages = {389-404},
  file = {/Users/iganev/Zotero/storage/49N43LHM/Lu - 1993 - Moment maps at the quantum level.pdf;/Users/iganev/Zotero/storage/MF6EJ5G9/1104253946.html}
}

@article{CasselmanAsymptoticbehaviormatrix1982,
  title = {Asymptotic Behavior of Matrix Coefficients of Admissible Representations},
  volume = {49},
  issn = {0012-7094, 1547-7398},
  doi = {10.1215/S0012-7094-82-04943-2},
  abstract = {Project Euclid - mathematics and statistics online},
  number = {4},
  journal = {Duke Mathematical Journal},
  author = {Casselman, William and Mili{\v c}i\'c, Dragan},
  year = {1982},
  pages = {869-930},
  file = {/Users/iganev/Zotero/storage/C2HJ3J4P/1077315535.html}
}

@article{MilicicAsymptoticbehaviourmatrix1977,
  title = {Asymptotic Behaviour of Matrix Coefficients of the Discrete Series},
  volume = {44},
  issn = {0012-7094, 1547-7398},
  doi = {10.1215/S0012-7094-77-04403-9},
  abstract = {Project Euclid - mathematics and statistics online},
  number = {1},
  journal = {Duke Mathematical Journal},
  author = {Mili{\v c}i\'c, Dragan},
  year = {1977},
  pages = {59-88},
  file = {/Users/iganev/Zotero/storage/LYMEUEQ6/1077312092.html}
}

@book{WarnerHarmonicAnalysisSemiSimple1972,
  address = {Berlin Heidelberg},
  series = {Grundlehren der mathematischen Wissenschaften},
  title = {Harmonic {{Analysis}} on {{Semi}}-{{Simple Lie Groups II}}},
  isbn = {978-3-642-51642-9},
  abstract = {Harmonic Analysis on Semi-Simple Lie Groups II...},
  publisher = {{Springer-Verlag}},
  author = {Warner, Garth},
  year = {1972},
  file = {/Users/iganev/Zotero/storage/TIEG8TMT/9783642516429.html}
}

@incollection{Harish-ChandraDifferentialequationssemisimple1984,
  address = {New York},
  title = {Differential Equations and Semisimple {{Lie}} Groups, Unpublished Manuscript (1960)},
  booktitle = {Collected {{Papers}}, {{Vol}}. {{III}}},
  publisher = {{Springer-Verlag}},
  author = {{Harish-Chandra}},
  year = {1984},
  pages = {57-120}
}

@article{LunaToutevarietemagnifique1996,
  title = {Toute Vari\'et\'e Magnifique Est Sph\'erique},
  volume = {1},
  issn = {1531-586X},
  doi = {10.1007/BF02549208},
  abstract = {LetG be a (connected) reductive group (over C). An algebraicG-varietyX is called ``wonderful'', if the following conditions are satisfied:X is (connected) smooth and complete;X containsr irreducible smoothG-invariant divisors having a non void transversal intersection;G has 2 r orbits inX. We show that wonderful varieties are necessarily spherical (i.e., they are almost homogeneous under any Borel subgroup ofG).},
  number = {3},
  journal = {Transformation Groups},
  author = {Luna, D.},
  year = {1996},
  pages = {249-258}
}

@article{BezrukavnikovGeometrysecondadjointness2015,
  title = {Geometry of Second Adjointness for P-Adic Groups},
  volume = {19},
  issn = {1088-4165},
  doi = {10.1090/ert/471},
  abstract = {We present a geometric proof of second adjointness for a reductive -adic group. Our approach is based on geometry of the wonderful compactification and related varieties. Considering asymptotic behavior of a function on the group in a neighborhood of a boundary stratum of the compactification, we get a ‘‘cospecialization'' map between spaces of functions on various varieties carrying a action. These maps can be viewed as maps of bimodules for the Hecke algebra, and the corresponding natural transformations of endo-functors of the module category lead to the second adjointness. We also get a formula for the ‘‘cospecialization'' map expressing it as a composition of the orispheric transform and inverse intertwining operator; a parallel result for -modules was obtained by Bezrukavnikov, Finkelberg and Ostrik. As a byproduct we obtain a formula for the Plancherel functional restricted to a certain commutative subalgebra in the Hecke algebra generalizing a result by Opdam.},
  number = {14},
  journal = {Representation Theory of the American Mathematical Society},
  author = {Bezrukavnikov, Roman and Kazhdan, David},
  year = {2015},
  pages = {299-332},
  file = {/Users/iganev/Zotero/storage/CY5WRLYF/Bezrukavnikov and Kazhdan - 2015 - Geometry of second adjointness for -adic groups.pdf;/Users/iganev/Zotero/storage/HTE4ZAMN/S1088-4165-2015-00471-6.html}
}

@article{SakellaridisPeriodsHarmonicAnalysis2017,
  title = {Periods and {{Harmonic Analysis}} on {{Spherical Varieties}}},
  volume = {396},
  journal = {Ast\'erisque},
  author = {Sakellaridis, Yiannis and Venkatesh, Akshay},
  year = {2017}
}

@phdthesis{SagatovLogarithmicDifferentialOperators2017,
  title = {Logarithmic {{Differential Operators}} on the {{Wonderful Compactification}}},
  copyright = {University of Chicago dissertations are covered by copyright. They may be viewed from this source for any purpose, but reproduction or distribution in any format is prohibited without written permission.},
  abstract = {If \$G\$ is a simply-connected semisimple complex algebraic group, its adjoint form admits a particularly nice equivariant completion called the De Concini--Procesi or wonderful compactification. The complement of the adjoint group in its wonderful compactification \$$\backslash$mathbf\{X\}\$ is a divisor \$$\backslash$mathbf\{Y\}\$ with normal crossings and smooth irreducible components, so it makes sense to consider the sheaf of logarithmic differential operators \$$\backslash$mathcal\{D\}\_\{$\backslash$mathbf\{X\}, $\backslash$mathbf\{Y\}\}\$ on the pair \$($\backslash$mathbf\{X\}, $\backslash$mathbf\{Y\})\$. After reviewing the construction of \$$\backslash$mathbf\{X\}\$ and \$$\backslash$mathcal\{D\}\_\{$\backslash$mathbf\{X\}, $\backslash$mathbf\{Y\}\}\$, we relate the latter object to a canonical \$(G $\backslash$times G)\$-action on \$$\backslash$mathbf\{X\}\$. In particular, this gives rise to a homomorphism from the Lie algebra \$$\backslash$mathfrak\{g\} $\backslash$oplus $\backslash$mathfrak\{g\}\$ to the global logarithmic vector fields on \$$\backslash$mathbf\{X\}\$, which extends to a homomorphism from the universal enveloping algebra to the global logarithmic differential operators on \$$\backslash$mathbf\{X\}\$. We show that the latter homomorphism is surjective, compute its kernel, and relate the result to global differential operators on the adjoint group. We also demonstrate analogous results in the setting of logarithmic differential operators twisted by an invertible sheaf on \$$\backslash$mathbf\{X\}\$. We end with a short application of the results to certain modules over \$$\backslash$mathcal\{D\}\_\{$\backslash$mathbf\{X\}, $\backslash$mathbf\{Y\}\}\$ with support on the closed orbit in \$$\backslash$mathbf\{X\}\$, relating them to the now classical Beilinson-Bernstein theory of differential operators on flag varieties.},
  school = {University of Chicago},
  author = {Sagatov, Sergei},
  year = {2017},
  file = {/Users/iganev/Zotero/storage/2M5FKX2T/Sagatov - 2017 - Logarithmic Differential Operators on the Wonderfu.pdf;/Users/iganev/Zotero/storage/RM26SIXA/658.html}
}

@article{BouthierDimensionfibresSpringer2015,
  title = {Dimension Des Fibres de {{Springer}} Affines Pour Les Groupes},
  volume = {20},
  issn = {1531-586X},
  doi = {10.1007/s00031-015-9326-9},
  abstract = {This article establishes a dimension formula for a group version of affine Springer fibers. We follow the method initiated by Bezrukavnikov in the case of Lie algebras. It consists in the introduction of a big enough regular open subset, with the same dimension as the affine Springer fiber. We show that, in the case of groups, such a regular open subset with analogous properties exists. Its construction needs the introduction of the Vinberg semi-group VG, for which we study an adjoint quotient $\chi$+ and extend for $\chi$+ the results previously established by Steinberg.},
  number = {3},
  journal = {Transformation Groups},
  author = {Bouthier, Alexis},
  year = {2015},
  keywords = {Compacti Cation,Nous Allons,Nous Avons,Nous Obtenons,Regular Open Subset},
  pages = {615-663},
  file = {/Users/iganev/Zotero/storage/7WQS9GQC/BOUTHIER - 2015 - DIMENSION DES FIBRES DE SPRINGER AFFINES POUR LES .pdf}
}

@article{SabbahMHMProject,
  title = {The {{Mixed Hodge Modules Project}}},
  abstract = {The theory of mixed Hodge modules represents a vast generalization of classical Hodge theory. It was introduced by Morihiko Saito in two long papers in 1988 and 1990, building on the foundations created by many people during the 1970s and 1980s, especially in the study of variations of mixed Hodge structure and their degenerations, perverse sheaves, and 𝒟
D
-module theory. The theory is very powerful and led to striking applications right from the beginning, such as
- a new, complex-analytic proof for the decomposition theorem of Beilinson, Bernstein, Deligne, and Gabber;
- the construction of pure Hodge structures on the intersection cohomology of projective algebraic varieties;
- a far-reaching generalization of Kodaira's vanishing theorem.
Over the years, new applications of mixed Hodge modules came to light, but it is fair to say that in spite of its power, the theory is not as widely known as it should be. A learning workshop was organized by Mircea Musta{\c t}{\u a}, Claude Sabbah and Christian Schnell in Oxford, August 19-23, 2013, within the workshop program of the Clay Mathematics Institute. Its goal was to make Saito's theory more accessible by bringing together a group of people interested in this topic, in order to discuss both the fundamentals of the theory and its applications. While a handful of participants were experts in this topic (including Saito himself), the majority consisted of experts in other fields, who used some aspects of the theory and who wanted to get a better understanding of its inner workings and of the existing applications.},
  author = {Sabbah, Claude and Schnell, Christian},
  pages = {\url{http://www.cmls.polytechnique.fr/perso/sabbah/MHMProject/mhm.html}},
  file = {/Users/iganev/Zotero/storage/6FQBMKWZ/mhm.html}
}

@article{Semenov-Tian-ShanskyHarmonicanalysisRiemannian1976,
  title = {Harmonic Analysis on {{Riemannian}} Symmetric Spaces of Negative Curvature, and Scattering Theory},
  volume = {40},
  issn = {0373-2436},
  number = {3},
  journal = {Izv. Akad. Nauk SSSR Ser. Mat.},
  author = {{Semenov-Tian-Shansky}, M. A.},
  year = {1976},
  pages = {562-592},
  fjournal = {Izvestiya Akademii Nauk SSSR. Seriya Matematicheskaya},
}

@article{BrionLoghomogeneousvarieties2007a,
  series = {Revista Matem\'atica Iberoamericana, Madrid},
  title = {Log Homogeneous Varieties},
  journal = {Actas del XVI Coloquio Latinoamericano de \`Algebra},
  author = {Brion, Michel},
  year = {2007},
  pages = {1--39}
}

@article{AbeJacquetFunctorConcini2015,
  title = {Jacquet {{Functor}} and {{De Concini}}\textendash{{Procesi Compactification}}},
  volume = {2015},
  issn = {1073-7928},
  doi = {10.1093/imrn/rnu048},
  abstract = {Abstract.  We give a geometric realization of the Jacquet functor using a deformation of De Concini\textendash{}Procesi compactification.},
  number = {12},
  journal = {International Mathematics Research Notices},
  author = {Abe, Noriyuki and Mieda, Yoichi},
  year = {2015},
  pages = {3810-3829},
  file = {/Users/iganev/Zotero/storage/7999QY5J/672350.html}
}

@article{SchiederGeometricBernsteinAsymptotics2016,
  archivePrefix = {arXiv},
  eprinttype = {arxiv},
  eprint = {1607.00586},
  title = {Geometric {{Bernstein Asymptotics}} and the {{Drinfeld}}-{{Lafforgue}}-{{Vinberg}} Degeneration for Arbitrary Reductive Groups},
  abstract = {We define and study the Drinfeld-Lafforgue-Vinberg compactification of the moduli stack of G-bundles Bun\_G for an arbitrary reductive group G; its definition is given in terms of the Vinberg semigroup of G, and is due to Drinfeld (unpublished). Throughout the article we prefer to view the compactification as a canonical multi-parameter degeneration of Bun\_G which we call the Drinfeld-Lafforgue-Vinberg degeneration VinBun\_G. We construct local models for the degeneration VinBun\_G which "factorize in families" and use them to study its singularities, generalizing results of the article [Sch1] which was confined with the case G = SL\_2. The multi-parameter degeneration VinBun\_G gives rise to, for each parabolic of G, a corresponding nearby cycles functor. Our main theorem expresses the stalks of these nearby cycles in terms of the cohomology of the parabolic Zastava spaces. From this description we deduce that the nearby cycles of VinBun\_G correspond, under the sheaf-function correspondence, to Bernstein's asymptotics map on the level of functions. This had been speculated by Bezrukavnikov-Kazhdan [BK] and Chen-Yom Din [CY] and conjectured in a precise form by Sakellaridis [Sak2].},
  journal = {arXiv:1607.00586},
  author = {Schieder, Simon},
  year = {2016},
  keywords = {Mathematics - Algebraic Geometry,Mathematics - Representation Theory,Mathematics - Number Theory},
  file = {/Users/iganev/Zotero/storage/82STILAH/Schieder - 2016 - Geometric Bernstein Asymptotics and the Drinfeld-L.pdf;/Users/iganev/Zotero/storage/7BJ5G7XM/1607.html},
  annote = {Comment: 49 pages}
}

@article{Wangreductivemonoidassociated2017,
  title = {On the Reductive Monoid Associated to a Parabolic Subgroup},
  volume = {27},
  number = {3},
  journal = {Journal of Lie Theory},
  author = {Wang, Jonathan},
  year = {2017},
  pages = {637--655}
}

@article{HechtCharactersasymptoticsnhomology1983,
  title = {Characters, Asymptotics and $\nu$-Homology of {{Harish}}-{{Chandra}} Modules},
  volume = {151},
  issn = {0001-5962, 1871-2509},
  doi = {10.1007/BF02393204},
  abstract = {Project Euclid - mathematics and statistics online},
  journal = {Acta Mathematica},
  author = {Hecht, Henryk and Schmid, Wilfried},
  year = {1983},
  pages = {49-151},
  file = {/Users/iganev/Zotero/storage/9YH5BF5F/Hecht and Schmid - 1983 - Characters, asymptotics and ν-homology of Harish-C.pdf;/Users/iganev/Zotero/storage/4E6R79RG/1485890262.html}
}

@article{HechtasymptoticsHarishChandramodules1983,
  title = {On the Asymptotics of {{Harish}}-{{Chandra}} Modules.},
  volume = {343},
  issn = {0075-4102; 1435-5345/e},
  journal = {Journal f\"ur die reine und angewandte Mathematik},
  author = {Hecht, Henryk and Schmid, Wilfried},
  year = {1983},
  pages = {169-183},
  file = {/Users/iganev/Zotero/storage/3H2MGIYF/152554.html}
}

@article{PhillipsScatteringtheorysymmetric1993,
  title = {Scattering Theory for Symmetric Spaces of Noncompact Type},
  volume = {72},
  issn = {0012-7094, 1547-7398},
  doi = {10.1215/S0012-7094-93-07201-8},
  abstract = {Project Euclid - mathematics and statistics online},
  number = {1},
  journal = {Duke Mathematical Journal},
  author = {Phillips, Ralph S. and Shahshahani, Mehrdad M.},
  year = {1993},
  pages = {1-29},
  file = {/Users/iganev/Zotero/storage/FAASIFTT/1077289210.html}
}

@article{HelgasonIntegralgeometrymultitemporal1998,
  title = {Integral Geometry and Multitemporal Wave Equations},
  volume = {51},
  issn = {1097-0312},
  doi = {10.1002/(SICI)1097-0312(199809/10)51:9/10<1035::AID-CPA5>3.0.CO;2-E},
  number = {9-10},
  journal = {Communications on Pure and Applied Mathematics},
  author = {Helgason, Sigurdur},
  year = {1998},
  pages = {1035-1071},
  file = {/Users/iganev/Zotero/storage/IUDPGGIT/101035AID-CPA53.0.html}
}

@book{DeligneEquationsDifferentiellesPoints1970,
  address = {Berlin Heidelberg},
  series = {Lecture Notes in Mathematics},
  title = {Equations {{Differentielles}} a {{Points Singuliers Reguliers}}},
  isbn = {978-3-540-05190-9},
  abstract = {Equations Differentielles a Points Singuliers Reguliers...},
  publisher = {{Springer-Verlag}},
  author = {Deligne, Pierre},
  year = {1970}
}

@article{Ben-ZviBeilinsonBernsteinlocalizationHarishChandra2012a,
  archivePrefix = {arXiv},
  eprinttype = {arxiv},
  eprint = {1209.0188},
  title = {Beilinson-{{Bernstein}} Localization over the {{Harish}}-{{Chandra}} Center},
  abstract = {We present a simple proof of a strengthening of the derived Beilinson-Bernstein localization theorem using the formalism of descent in derived algebraic geometry. The arguments and results apply to arbitrary modules without the need to fix infinitesimal character. Roughly speaking, we demonstrate that all Ug-modules are the invariants, or equivalently coinvariants, of the action of intertwining functors (a refined form of Weyl group symmetry). This is a quantum version of descent for the Grothendieck-Springer simultaneous resolution.},
  journal = {arXiv:1209.0188},
  author = {{Ben-Zvi}, David and Nadler, David},
  year = {2012},
  keywords = {Mathematics - Algebraic Geometry,Mathematics - Quantum Algebra,Mathematics - Representation Theory},
  file = {/Users/iganev/Zotero/storage/4SFG7GD9/Ben-Zvi and Nadler - 2012 - Beilinson-Bernstein localization over the Harish-C.pdf;/Users/iganev/Zotero/storage/97Q2I5XP/1209.html}
}

@article{ChenCasselmanJacquetfunctor2017a,
  title = {On the {{Casselman}}-{{Jacquet}} Functor},
  abstract = {We study the Casselman-Jacquet functor \$J\$, viewed as a functor from the (derived) category of \$($\backslash$mathfrak\{g\},K)\$-modules to the (derived) category of \$($\backslash$mathfrak\{g\},N\^-)\$-modules, \$N\^-\$ is the negative maximal unipotent. We give a functorial definition of \$J\$ as a certain right adjoint functor, and identify it as a composition of two averaging functors \$$\backslash$text\{Av\}\^\{N\^-\}\_!$\backslash$circ $\backslash$text\{Av\}\^N\_*\$. We show that it is also isomorphic to the composition \$$\backslash$text\{Av\}\^\{N\^-\}\_*$\backslash$circ $\backslash$text\{Av\}\^N\_!\$. Our key tool is the pseudo-identity functor that acts on the (derived) category of (twisted) \$D\$-modules on an algebraic stack.},
  journal = {Proc. Sympos. Pure Math},
  author = {Chen, Tsao-Hsien and Gaitsgory, Dennis and {Yom~Din}, Alexander},
  year = {2019},
  volume={11} ,
  pages={73--112}
}

@article{Evenswonderfulcompactification2008a,
  archivePrefix = {arXiv},
  eprinttype = {arxiv},
  eprint = {0801.0456},
  title = {On the Wonderful Compactification},
  abstract = {These lecture notes explain the construction and basic properties of the wonderful compactification of a complex semisimple group of adjoint type. An appendix discusses the more general case of a semisimple symmetric space.},
  journal = {arXiv:0801.0456},
  author = {Evens, Sam and Jones, Benjamin F.},
  year = {2008},
  keywords = {Mathematics - Algebraic Geometry,14M17,Mathematics - Representation Theory,20G20},
  file = {/Users/iganev/Zotero/storage/RGWMDHHR/Evens and Jones - 2008 - On the wonderful compactification.pdf;/Users/iganev/Zotero/storage/P6ELBHTB/0801.html},
  annote = {Comment: 28 pages}
}

@article{GaitsgorySphericalVarietiesLanglands2010,
  title = {Spherical {{Varieties}} and {{Langlands Duality}}},
  volume = {10},
  abstract = {Let G be a connected reductive complex algebraic group. This paper is devoted to the space Z of meromorphic quasimaps from a curve into an affine spherical G-variety X. The space Z may be thought of as a finite-dimensional algebraic model for the loop space of X. The theory we develop associates to X a connected reductive complex algebraic subgroup H\textasciicaron{} of the dual group G\textasciicaron. The construction of H\textasciicaron{} is via Tannakian formalism: we identify a certain tensor category Q(Z) of perverse sheaves on Z with the category of finite-dimensional representations of H\textasciicaron{} . The group H\textasciicaron{} encodes many aspects of the geometry of X.},
  number = {1},
  journal = {Moscow Mathematical Journal},
  author = {Gaitsgory, Dennis and Nadler, David},
  year = {2010},
  pages = {65--137},
  file = {/Users/iganev/Zotero/storage/7YEAGUTI/Gaitsgory and Nadler - SPHERICAL VARIETIES AND LANGLANDS DUALITY.pdf}
}

@article{PopovContractionsactionsreductive1987,
  title = {Contractions of Actions of Reductive Algebraic Groups},
  volume = {58},
  doi = {10.1070/SM1987v058n02ABEH003106},
  journal = {Sbornik Mathematics},
  author = {Popov, Vladimir},
  year = {1987},
  pages = {311--335},
  file = {/Users/iganev/Zotero/storage/SW3LTLRH/Popov - 1987 - Contractions of actions of reductive algebraic gro.pdf}
}

@inproceedings{KnopLunaVusttheoryspherical1991,
  title = {The {{Luna}}-{{Vust}} Theory of Spherical Embeddings},
  abstract = {the importance of a very distinguished class of homogeneous varieties G/H, those which are now called spherical. Such varieties are homogeneous for a connected reductive group G and are characterized by many equivalent properties, the most important being (see [BLV]): \textemdash{} Any Borel subgroup B of G has an open orbit in G/H. \textemdash{} Every equivariant completion of G/H contains only finitely many orbits. \textemdash{} For every irreducible G-module V and any character $\chi$ of H dim \{v $\in$ V | hv = $\chi$(h)v for all h $\in$ H\}  $\leq$ 1. Because of the last property spherical varieties are sometimes called ``multiplicity free''. This class of varieties includes some of the most important spaces, e. g. \textemdash{} Tori, i. e. G is a torus and H is trivial; \textemdash{} Symmetric varieties, i. e. H is the set of fixed points of an automorphism of G of order two. \textemdash{} Horospherical varieties, i. e. H contains a maximal unipotent subgroup of G (for example flag varieties). The theory of these spaces can be unified with the concept of spherical varieties. For example, all three types of spaces have a nice compactification theory, respectively due to Demazure [Dem], Kempf et al. [KKMS], Miyake-Oda [MO]... (toric varieties 2), Satake [Sat], De Concini-Procesi [CP]... (symmetric varieties) and Pauer [Pau1] (horospherical varieties). The last case appeared already as an application of the general theory of Luna and Vust on embeddings of arbitrary homogeneous varieties. Unfortunately, in [LV] the classification of embeddings of spherical varieties is buried in a large amount of theory, which makes the paper very hard to read. Therefore the theory of Luna and Vust did not receive the attention which it deserves.},
  booktitle = {In {{Proc}}. of the {{Hyderabad Conf}}. on {{Algebraic Groups}}, {{Manoj Prakashan}}},
  author = {Knop, Friedrich},
  year = {1991},
  pages = {225--249},
  file = {/Users/iganev/Zotero/storage/YIXLHE3C/Knop - 1991 - The Luna-Vust theory of spherical embeddings.pdf;/Users/iganev/Zotero/storage/PQXEW7GS/summary.html}
}

\end{document}